\newcommand{\rset}{\mathbb{R}}
\newcommand{\nset}{\mathbb{N}}
\newcommand{\nl}{\nolimits}
\newcommand{\ind}{\mathbf{1}}
\newcommand{\e}{\mathbb{E}}
\newcommand{\V}{\mathbb{V}}
\newcommand{\p}{\mathbb{P}}
\newcommand{\D}{\mathbb{D}}
\newcommand{\lp}{\mathrm{L}}
\newcommand{\spp}{\mathrm{S}}
\newcommand{\hpp}{\mathrm{H}}
\newcommand{\m}{\mathcal}
\newcommand{\cf}{\mathcal{F}}
\newcommand{\cc}{\mathcal{C}}
\def\norm#1{\|#1\|}
\def\bY{\bm{Y}}
\def\bZ{\bm{Z}}
\def\bU{\bm{U}}
\def\bG{\bm{G}}
\def\bQ{\bm{Q}}
\def\ov{\overline}
\theoremstyle{plain}
\newtheorem{thm}{Theorem}[section]
\newtheorem{lemme}[thm]{Lemma}
\newtheorem{prop}[thm]{Proposition}
\newtheorem{hypo}[thm]{Hypothesis}
\theoremstyle{definition}
\newtheorem{definition}[thm]{Definition}
\newtheorem{ex}[thm]{Example}
\theoremstyle{plain}
\newtheorem{rem}[thm]{Remark}
\journal{Stochastic Proc. Appl.}
\begin{document}

\begin{frontmatter}

%% Title, authors and addresses

%% use the tnoteref command within \title for footnotes;
%% use the tnotetext command for theassociated footnote;
%% use the fnref command within \author or \address for footnotes;
%% use the fntext command for theassociated footnote;
%% use the corref command within \author for corresponding author footnotes;
%% use the cortext command for theassociated footnote;
%% use the ead command for the email address,
%% and the form \ead[url] for the home page:
%% \title{Title\tnoteref{label1}}
%% \tnotetext[label1]{}
%  \author{Christel Geiss\corref{cor1}\fnref{label2}}
% \author{Cline Labart\corref{cor3}\fnref{label4}}
%  \ead{LAMA - UNiversit de Savoie
% Campus Scientifique
% 73376 Le Bourget du Lacemail address}
% % \ead[url]{home page}
% % \fntext[label2]{}
% % \cortext[cor1]{}
% \address{Campus Scientifique
% 73376 Le Bourget dus\fnref{label3}}
% \fntext[label3]{}

\title{Simulation of BSDEs with jumps by Wiener Chaos Expansion}

%% use optional labels to link authors explicitly to addresses:
% \author[label1,label2]{c}
%% \address[label1]{}
%% \address[label2]{}

% \author[label1,label2]{Christel Geiss \and Céline Labart}
% \author[label1,label2]{ Céline Labart}
% \author{Name\corref{cor1}\fnref{label2}   }
% \address{}

\author[label1]{Christel Geiss\corref{cor1}}  \author[label2]{Céline Labart\corref{cor2}}

\address[label1]{Department of Mathematics and Statistics,
P.O.Box 35 (MaD),FI-40014 University of Jyväskylä, Finland}
\address[label2]{LAMA - Universit\'e de Savoie,
Campus Scientifique,
73376 Le Bourget du Lac, France}

\cortext[cor1]{christel.geiss@jyu.fi \tiny(corresponding author)}
\cortext[cor2]{celine.labart@univ-savoie.fr}
\begin{abstract}
%% Text of abstract
We present an algorithm to solve BSDEs with jumps based on
   Wiener Chaos Expansion and Picard's iterations. This paper extends the
   results given in \cite{BL_14} to the case of BSDEs with jumps. We get a
  forward scheme where the conditional expectations are easily computed thanks
   to chaos decomposition formulas. Concerning the error, we derive explicit
  bounds with respect to the number of chaos, the discretization time step and
   the number of Monte Carlo simulations. We also present numerical
   experiments. We obtain very encouraging results in terms of speed and
  accuracy.
\end{abstract}

\begin{keyword} Backward stochastic Differential Equations with jumps,
 Wiener Chaos expansion, Numerical method
%% keywords here, in the form: keyword \sep keyword

%% PACS codes here, in the form: \PACS code \sep code

%% MSC codes here, in the form: 
\MSC 60H10\sep 60J75\sep 60H35 \sep 65C05 \sep 65G99
 \sep 60H07
%% or \MSC[2008] code \sep code (2000 is the default)

\end{keyword}

\end{frontmatter}

\section{Introduction}

\label{sec:introduction} In this paper we are interested in the numerical
approximation of solutions $(Y,Z,U)$ to backward stochastic differential
equations (BSDEs in the sequel) with jumps of the following form
\begin{equation}\label{eq:main}  Y_t=\xi+\int_t^T f(s,Y_s,Z_s,U_s)\, ds -
  \int_t^T Z_s dB_s-\int_{]t,T]} U_s d\tilde{N}_s,\quad 0\leq t\leq T,
\end{equation} 
where $B$ is a $1$-dimensional standard Brownian motion and $\tilde{N}$ is a
compensated Poisson process independent from B, i.e.
$\tilde{N}_t:=N_t-\kappa t$ and $\{N_t\}_{t \ge 0}$ is a Poisson process with intensity
$\kappa>0.$ The terminal condition $\xi$ is a real-valued $\m F_T$--measurable
random variable where $\{\m F_t\}_{0\leq t\leq T}$ stands for the augmented
natural filtration associated with $B$ and $N$. Under standard Lipschitz
assumptions on the driver $f$, the existence and uniqueness of the solution
have been stated by Tang and Li \cite{TL_94},
generalizing the seminal paper of Pardoux and Peng \cite{PP_92}. \smallskip

The main objective of this paper is to propose a numerical method to
approximate the solution $(Y,Z,U)$ of \eqref{eq:main}. In the no-jump case,
there exist several methods to simulate $(Y,Z)$. The most popular one is the
method based on the dynamic programming equation, introduced by Briand, Delyon and
Mémin \cite{BDM_02}. In the Markovian case, the rate of convergence of the
method has been studied by Zhang \cite{Zha_04} and Bouchard and Touzi
\cite{BT_04}. From a numerical point of view, the main difficulty in solving
BSDEs is to compute conditional expectations. Different approaches have been
proposed: Malliavin calculus \cite{BT_04}, regression methods \cite{GLW_05}
and quantization techniques \cite{BP_03}. In the general case (i.e. for a
terminal condition which is not necessarily Markovian), Briand and Labart
\cite{BL_14} have proposed a forward scheme based on Wiener chaos expansion
and Picard's iterations. Thanks to the chaos decomposition formulas,
conditional expectations are easily computed, which leads to an efficient,
fully implementable scheme. In case of BSDEs driven by a Poisson random
measure, Bouchard and Elie \cite{BE_08} have proposed a scheme based on the
dynamic programming equation and studied the rate of convergence of the method
when the terminal condition is given by $\xi=g(X_T)$, where $g$ is a Lipschitz
function and $X$ is a forward process. More recently, Geiss and Steinicke
\cite{GS_13} have extended this result to the case of a terminal condition
which may be a Borel function of finitely many increments of the Lévy forward
process $X$ which is not necessarily Lipschitz but only satisfies a fractional
smoothness condition. In the case of jumps driven by a compensated Poisson
process, Lejay, Mordecki and Torres \cite{LMT_14} have developed a fully
implementable scheme based on a random binomial tree, following the
approach proposed by Briand, Delyon and Mémin \cite{BDM_01}.\smallskip

In this paper, we extend the algorithm based on Picard's iterations and Wiener
chaos expansion introduced in \cite{BL_14} to the case of BSDEs with
jumps. Our starting point is the use of Picard's iterations: $(Y^0,Z^0,U^0)=(0,0,0)$
and for $q\in\nset$,
\begin{equation*} Y^{q+1}_t = \xi + \int_t^T f\left(s,Y^q_s,Z^q_s,U^q_s\right) ds -
  \int_t^T Z^{q+1}_s\cdot dB_s-\int_{]t,T]} U^{q+1}_s d\tilde{N}_s, \quad 0\leq t\leq T.
\end{equation*}
Writing this Picard scheme in a forward way gives
\begin{align*}
	Y^{q+1}_t & = \e\left(\xi + \int_0^T f\left(s,Y^q_s,Z^q_s,U^q_s\right) ds \:\Big|\: \m F_t\right) - \int_0^t f\left(s,Y^q_s,Z^q_s,U^q_s\right) ds, \\
	Z^{q+1}_t & =\e\left(  D^{(0)}_t Y^{q+1}_t\:\Big|\: \m F_{t^-}\right) =  \e\left( D^{(0)}_t \left (\xi + \int_0^T
      f\left(s,Y^q_s,Z^q_s,U^q_s\right) ds \right ) \:\Big|\: \m F_{t^-}\right),\\
    U^{q+1}_t & = \e\left(  D^{(1)}_t Y^{q+1}_t\:\Big|\: \m F_{t^-}\right) =  \e\left( D^{(1)}_t\left ( \xi + \int_0^T
      f\left(s,Y^q_s,Z^q_s,U^q_s\right) ds \right)\:\Big|\: \m F_{t^-}\right),\\
\end{align*}
where $D^{(0)}_t X$ (resp. $D^{(1)}_t X$) stands for the Malliavin derivative of the random variable
$X$ with respect to the Brownian motion (resp. w.r.t. the Poisson process).\smallskip

In order to compute the previous conditional expectation, we use a Wiener chaos expansion of the random variable
\begin{equation*}
F^{q}=\xi + \int_0^T f\left(s,Y^q_s,Z^q_s,U^q_s\right) ds .
\end{equation*}
More precisely, we use the following orthogonal decomposition of the random
variable $F^q$ (see Proposition \ref{chaos-decomposition})
\begin{equation*} F^q = \e\left[F^q\right] + \sum_{k=1}^\infty \sum_{l=0}^k \, \sum_{{\bf k}_l \in \nset^l  }  \,\,  \sum_{{\bf j}_{k-l} \in \nset^{k-l}}   
  d_{{\bf k}_l,  \, {\bf j}_{k-l}} L^{0,...,0}_l(\tilde e[k_1,\ldots,k_l]) \, L^{1,...,1}_{k-l}(\tilde e[j_1,\ldots,j_{k-l}]).
\end{equation*}
where $L^{0, \cdots,0}_m(g)$ (resp. $L^{1, \cdots,1}_{m}(g)$) denotes the
iterated integral of order $m$ of $g$ w.r.t.~the Brownian motion
(resp. w.r.t.~the compensated Poisson process), $(\tilde e[k_1,\ldots,k_m])_{k_m \in \nset}$ is an
orthogonal basis of $(\tilde{L}^2)^{\otimes m}([0,T])$, the subspace of
symmetric functions from $(L^2)^{\otimes m}([0,T])$. The sequence of coefficients $\{d_{{\bf k}_l, \,{\bf
      j}_{k-l}}\}_{{\bf k}_l \in \nset^l, \, {\bf j}_{k-l} \in \nset^{k-l}}$ 
   ensues from the Wiener chaos decomposition of $F^q$. \smallskip

 The point to get an implementable scheme is that we only keep a finite number of terms in this
  expansion: we use a finite number of chaos and we choose a finite number of
  functions $\{e_1,\cdots,e_N\}$ to build $\{\tilde{e}[k_1,\cdots,k_m]\}_{k_m \in
    \{1,\cdots,N\}}$. More precisely, if we choose
  $e_i:=\frac{1}{\sqrt{h}}\ind_{]\overline{t}_{i-1},\ov{t}_i]}$ where
    $\overline{t}_i=ih$ and $h:=\frac{T}{N}$, we obtain
    \begin{align*}
      F^q \sim \e\left[F^q\right]  +\sum_{k=1}^p \sum_{|n|=k} d^n_k \prod_{i=1}^N
      K_{n^B_i}\left(\frac{B_{\overline{t}_{i}}-B_{\overline{t}_{i-1}}}{\sqrt{h}}\right)C_{n^P_i}(N_{\overline{t}_{i}}-N_{\overline{t}_{i-1}},\kappa h),
    \end{align*} where $K_i$ (resp. $C_i$) denotes the Hermite
    (resp. Charlier) polynomial of degree $i$,
    $n=(n^B_1,\cdots,n^B_N,n^P_1,$ $\cdots,n^P_N)$ is a vector of integers and
    $|n|=\sum_{i=1}^N (n^B_i+n^P_i)$. By using this approximation of $F^q$ we
    can easily compute $\e(F^q|\cf_t)$,  $\e( D^{(0)}_t F^q|\cf_{t^-})$ and
    $\e( D^{(1)}_t F^q|\cf_{t^-})$, which gives us
    $(Y^{q+1}_t,Z^{q+1}_t,U^{q+1}_t)$. To get a fully implementable algorithm,
    it remains to approximate $\e(F^q)$ and the coefficients $\{d^n_k\}_{n,k}$ by Monte Carlo.  \smallskip

    When extending  \cite{BL_14} to the jump case one realizes that the main difficulty lies in the
    fact that there is no hypercontractivity property in the Poisson chaos
    decomposition case. This property plays an important role in the proof of
    the  convergence in the Brownian case. To circumvent this problem, we
   exploit a recent result of Last, Penrose, Schulte and Thäle
    \cite{LPST_14}, which gives a formula to compute the
    expectation of products of Poisson multiple integrals, and the according 
result for the Brownian case from  Peccati and Taqqu \cite{peccati}. In fact, in  equation \eqref{expectation-product-formula} of Proposition
    \ref{E-of-chaos-products}  we get an  explicit expression for 
\[\e ( I_{n_1}(f_{n_1}) \cdots   I_{n_l}(f_{n_l}))\] 
in terms of a combinatoric sum of tensor products of the chaos kernels $f_{n_i}.$
Here $I_{n_i}(f_{n_i})$ denotes the multiple integral of order $n_i$
with respect to the process $B + \tilde N.$  By this  expression  one  gets the required estimates for the truncated chaos without the
hypercontractivity  property. 
   Therefore, to  prove the convergence of the method we may proceed  similarly to  \cite{BL_14}, and  split the error into four terms:
    \begin{itemize}
    \item the error due to Picard iterations
    \item the error due to the truncation onto  the  chaos up to order $p$
    \item the error due to the finite number of basis functions $\{e_1,\cdots,e_N\}$ for each chaos
    \item the error due to the Monte Carlo simulations to approximate the
      expectations appearing in the coefficients $\{d^n_k\}_{n,k}.$
    \end{itemize} 
\bigskip
    The  paper is organized as follows: Section \ref{sect:WCE}
    contains the notations and gives preliminary results, Section
    \ref{sect:numerical-scheme} describes the approximation procedure,
    Section \ref{sect:conv-results} states the convergence results and Section
    \ref{sect:implementation} presents the algorithm and some numerical
    examples. Some technical results are proved in the appendix.

  \subsection{Definitions and Notations}
Given a probability space $(\Omega,\cf,\p)$ we consider
\begin{itemize}
\item $\lp^p(\m F_T):=\lp^p(\Omega,\cf_T,\p)$, $p\in \nset^*= \nset \setminus \{0\}$, the space of
  all $\cf_T$-measurable random variables (r.v. in the following) $X:\Omega
  \longmapsto \rset$ satisfying $\norm{X}^p_p: = \e(|X|^p)< \infty$.
\item $\spp^p_T(\rset)$, $p \in \nset, p\ge 2$, the space of all càdlàg, adapted processes $\phi:\Omega\times [0,T] \longmapsto \rset$ such that $\norm{\phi}^p_{\spp^p}=\e
  (\sup_{t \in [0,T]} |\phi_t|^p)< \infty$.
\item $\hpp^p_T(\rset)$, $p \in \nset, p\ge 2$, the space of all predictable processes
  $\phi:\Omega\times [0,T] \longmapsto \rset$ such that $\norm{\phi}^p_{\hpp^p_T}=\e
  (\int_0^T |\phi_t|^p dt) < \infty$.
\item $\lp^2(0,T)$, the space of all square integrable functions on $[0,T]$.
\item $C^{k,l}$, the set of continuously differentiable functions $\phi:(t,x)
  \in [0,T] \times \rset^3$ with continuous derivatives w.r.t. $t$
  (resp. w.r.t. $x$) up to order $k$ (resp. up to order $l$).
\item $C^{k,l}_b$, the set of continuously differentiable functions
  $\phi:(t,x) \in [0,T] \times \rset^3$ with continuous and uniformly bounded
  derivatives w.r.t. $t$ (resp. w.r.t. $x$) up to order $k$ (resp. up to order
  $l$). The function $\phi$ is also bounded.
\item $\norm{\partial^j_{sp} f}^2_{\infty}$, the sum of the squared norms of
  the derivatives of $f([0,T] \times \rset^3, \rset)$ w.r.t. all the space
  variables $x$ which sum equals $j$ : $\norm{\partial^j_{sp}
    f}^2_{\infty}:=\sum_{|k|=j}
  \norm{\partial^{k_1}_{x_1} \partial^{k_2}_{x_2} \partial^{k_3}_{x_3}
    f}^2_{\infty}$, where $|k|=k_1+k_2+k_3$.
\item $C^{\infty}_p$, the set of smooth functions $f:\rset^n \longmapsto
  \rset$  ($n \ge 1$) with partial derivatives of polynomial growth.
\item $\norm{(\cdot,\cdot, \cdot)}^p_{\lp^p}$, $p\ge 1$, the norm on the space $\spp^p_T(\rset)\times
  \hpp^p_T(\rset)\times
  \hpp^p_T(\rset)$ defined by
  \begin{align}\label{norme_L2_YZ}
    \norm{(Y,Z,U)}^p_{\lp^p}:=
  \e( \sup_{t \in [0,T]} |Y_t|^p)+\int_0^T \e(|Z_t|^p) dt+\kappa \int_0^T \e(|U_t|^p) dt .
\end{align}
\end{itemize}

\begin{hypo} \label{hypo1} 
 We assume
\begin{itemize}
	\item the terminal condition $\xi$ belongs to $\lp^2(\m F_T)$;
	\item the generator $f \in C([0,T] \times \rset^3;\rset)$ is Lipschitz
      continuous  in space, uniformly in $t$: there exists a constant $L_f$
      such that
      \begin{equation*} |f(t,y_1,z_1,u_1)-f(t,y_2,z_2,u_2)| \leq
        L_f\left(|y_1-y_2|+|z_1-z_2|+|u_1-u_2|\right).
	\end{equation*}
  \end{itemize}
\end{hypo}

\begin{lemme}
  If Hypothesis \ref{hypo1} is satisfied and  $\xi \in  \D^{1,2}$ (defined below) we get from \cite[Theorem 3.4]{GS_13} that
  for a.e. $t \in [0,T]$
  \begin{align}\label{eq3}
    Z_t= \e[D^{(0)}_t Y_t| \m F_{t-}] ,\quad U_t= \e[ D^{(1)}_t Y_t| \m F_{t-}] \,\,\,\, \p-{\mbox a.s.}
  \end{align} where $D^{(0)}_t X$ stands for the Malliavin derivative
  w.r.t. the Brownian motion of the random variable $X$, and $D^{(1)}_t X$
  stands for the Malliavin derivative w.r.t. the Poisson process of the random
  variable $X$. Here  $ \e[ \cdot | \m F_{t-}] $ should be understood as  the predictable projection,
  and since the paths $s \mapsto D^{(i)}_t Y_s$ are a.s.  c\`adl\`ag  we define $D^{(i)}_t Y_t := \lim_{s \downarrow t} D^{(i)}_t Y_s$
  if the limit exists, and zero otherwise.
\end{lemme}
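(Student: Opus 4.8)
The plan is to read the two claimed identities as a Clark--Ocone-type consequence of the martingale representation that already underlies the BSDE; this is precisely what \cite[Theorem 3.4]{GS_13} packages, so the shortest honest route is to check that Hypothesis \ref{hypo1} together with $\xi\in\D^{1,2}$ meets the standing assumptions of that theorem and then invoke it. I would nonetheless reconstruct the mechanism, since it also explains why the predictable projection and the right-limit convention on the diagonal are the natural objects. Set $F:=\xi+\int_0^T f(s,Y_s,Z_s,U_s)\,ds$ and consider the square-integrable martingale $N_t:=Y_t+\int_0^t f(s,Y_s,Z_s,U_s)\,ds=\e[F\mid\m F_t]$. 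Writing the BSDE \eqref{eq:main} in differential form gives $dN_t=Z_t\,dB_t+U_t\,d\tilde N_t$, so $(Z,U)$ are exactly the integrands in the martingale representation of $N$ in the filtration generated by $B$ and $\tilde N$; identifying $(Z,U)$ therefore amounts to identifying these integrands.

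The key steps are then the following. First, show $F\in\D^{1,2}$: this follows from $\xi\in\D^{1,2}$ and from the Malliavin differentiability of $(Y,Z,U)$, the latter obtained by a Picard argument in $\D^{1,2}$ exploiting that $f$ is Lipschitz, so that $(y,z,u)\mapsto f(s,y,z,u)$ is stable in $\D^{1,2}$. Second, apply the Clark--Ocone formula for the Brownian plus compensated-Poisson chaos to $F$; by uniqueness of the representation integrands this yields $Z_s=\e[D^{(0)}_s F\mid\m F_{s-}]$ and $U_s=\e[D^{(1)}_s F\mid\m F_{s-}]$, the predictable projections $\e[\cdot\mid\m F_{s-}]$ appearing precisely because the integrands are predictable while $D^{(i)}_s F$ need not be. Third, pass from $F$ to $Y$: writing $Y_s=\e[F\mid\m F_s]-\int_0^s f(u,Y_u,Z_u,U_u)\,du$ and using the non-anticipativity property $D^{(i)}_t G=0$ for any $\m F_u$-measurable $G$ with $u<t$, the Malliavin derivative of the finite-variation part is $D^{(i)}_t\int_0^s f(u,\cdot)\,du=\int_t^s D^{(i)}_t f(u,\cdot)\,du$, which vanishes as $s\downarrow t$. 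Hence the diagonal object $D^{(i)}_t Y_t:=\lim_{s\downarrow t}D^{(i)}_t Y_s$ equals $\lim_{s\downarrow t}D^{(i)}_t\e[F\mid\m F_s]$, and the commutation of $D^{(i)}_t$ with conditioning on $\m F_s$ for $s\ge t$ together with the tower property gives $\e[D^{(i)}_t Y_t\mid\m F_{t-}]=\e[D^{(i)}_t F\mid\m F_{t-}]$, which is the asserted identity.

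I expect the genuine difficulty to sit in the first and third steps rather than in the algebra relating $F$ and $Y$. Establishing that the jump-BSDE solution lies in $\D^{1,2}$ with Malliavin derivatives solving the formally differentiated linear BSDEs, and giving a rigorous meaning to the diagonal $D^{(i)}_t Y_t$, is delicate in the presence of jumps: the derivative $D^{(1)}$ with respect to the Poisson part is not predictable, so the predictable projection is indispensable, and the values on the diagonal $s=t$ must be selected by the right-limit convention using the a.s.\ càdlàg regularity of $s\mapsto D^{(i)}_t Y_s$. Since this technical core is exactly what is carried out in \cite[Theorem 3.4]{GS_13}, in the write-up I would keep the proof short---verifying $\xi\in\D^{1,2}$ and the Lipschitz property of $f$ from Hypothesis \ref{hypo1} and then citing that theorem---while recording the martingale-representation picture above as the conceptual justification.
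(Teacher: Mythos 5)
Your proposal is correct and follows essentially the same route as the paper, which gives no independent proof of this lemma but simply verifies that Hypothesis \ref{hypo1} and $\xi\in\D^{1,2}$ place one in the setting of \cite[Theorem 3.4]{GS_13} and invokes that result. Your reconstruction of the underlying mechanism (martingale representation of $\e[F\mid\m F_t]$ with $F=\xi+\int_0^Tf(s,Y_s,Z_s,U_s)\,ds$, the Clark--Ocone formula to identify the integrands as predictable projections of $D^{(0)}F$ and $D^{(1)}F$, and the right-limit convention on the diagonal to pass from $F$ to $Y$) is exactly the content of the cited theorem and is consistent with the paper's own later use of the Clark--Ocone formula of \cite[Theorem 1.8]{LSUV_02}.
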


\section{Wiener Chaos Expansion}\label{sect:WCE}
\subsection{Notations and useful results}

\subsubsection{ Iterated integrals}
We refer to \cite{LSUV_02} and \cite{privault} for more
details on this section. Let us briefly recall the Wiener chaos
expansion in the case of a real-valued Brownian motion and an independent
Poisson process with intensity $\kappa >0.$

We define
\begin{align*}
  G_0(t)=B_t,\quad G_1(t)=N_t - \kappa t,
\end{align*}
and $L_k^{i_1,\cdots,i_k}(f)$ the iterated integral of $f$ with respect to $G_0$ and
$G_1$
\begin{align*}
  L_k^{i_1,\cdots,i_k}(f)=\int_0^T\left(\int_0^{t_k^-}\cdots \left(\int_0^{t_2^-}
      f(t_1,\ldots,t_k) dG_{i_1}(t_1)\right) \cdots dG_{i_{k-1}}(t_{k-1})\right)dG_{i_k}(t_k).
\end{align*}

We have the following chaotic representation property.
\begin{prop} \label{CRP-for-iterated} 
(\cite[Proposition 2.1]{LSUV_02})  For  $k\in \nset^*$  define
  \[\mathbf{i}_k:=(i_1,\ldots,i_k)  \in \{0,1\}^k.  \] 
  Any 
  $F   \in \lp^2(\m F_T)$ has a   unique  representation of the form
  \begin{align}\label{eq0}
    F=\e(F)+\sum_{k=1}^{\infty} \,\, \sum_{\mathbf{i}_k \in \{0,1\}^k }   L_k^{ {\mathbf{i}_k}}   (f_{\mathbf{i}_k}),
  \end{align}
  where $f_{\mathbf{i}_k} \in L^2(\Sigma_k)$ and $\Sigma_k=\{(t_1,\ldots,t_k)
  \in [0,T]^k: 0<t_1<\cdots < t_k<T\}$ is the simplex of $[0,T]^k$. 
\end{prop}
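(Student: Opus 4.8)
The plan is to prove the orthogonal decomposition \eqref{eq0} in two independent parts: first the orthogonality and isometry of the iterated integrals, which yields uniqueness and makes sense of the infinite sum, and then the completeness of their linear span, which yields existence. First I would record the isometry and orthogonality relations. Applying the It\^o isometries for $G_0=B$ and $G_1=\tilde N$ level by level on the simplex $\Sigma_k$, and using that $B$ and $N$ are independent, one obtains that $L_k^{\mathbf{i}_k}(f)$ and $L_m^{\mathbf{j}_m}(g)$ are orthogonal in $\lp^2$ whenever $(k,\mathbf{i}_k)\neq(m,\mathbf{j}_m)$, while for matching indices
\[
\e\big[L_k^{\mathbf{i}_k}(f)\,L_k^{\mathbf{i}_k}(g)\big]=\kappa^{\#\{j:\,i_j=1\}}\int_{\Sigma_k} f\,g\,dt_1\cdots dt_k .
\]
This shows that $f\mapsto L_k^{\mathbf{i}_k}(f)$ is, up to the constant, an isometry from $\lp^2(\Sigma_k)$ and that the ranges for distinct $(k,\mathbf{i}_k)$ are mutually orthogonal. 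In particular the series in \eqref{eq0} converges in $\lp^2$ and its kernels are uniquely determined, so uniqueness follows.

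For existence it remains to show that the closed linear span $\mathcal H$ of $\{1\}$ together with all $L_k^{\mathbf{i}_k}(f)$ is all of $\lp^2(\cf_T)$. My approach is through stochastic exponentials. For bounded deterministic $g_0,g_1$ on $[0,T]$ set
\[
\mathcal E_T=\exp\Big(\textstyle\int_0^T g_0\,dB-\tfrac12\int_0^T g_0^2\,ds+\int_0^T g_1\,d\tilde N-\kappa\int_0^T(e^{g_1}-1-g_1)\,ds\Big),
\]
which is the terminal value of the Dol\'eans--Dade exponential solving the linear SDE $\mathcal E_t=1+\int_0^t\mathcal E_{s^-}g_0(s)\,dB_s+\int_0^t\mathcal E_{s^-}(e^{g_1(s)}-1)\,d\tilde N_s$. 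Iterating this SDE produces $\mathcal E_T$ as a series of iterated integrals $L_k^{\mathbf{i}_k}$ with explicit deterministic kernels; the $\lp^2$ convergence is controlled by the first-step isometry above together with the factorial gain coming from integration over the nested simplices. Hence each $\mathcal E_T\in\mathcal H$.

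Finally I would show that these exponentials are total in $\lp^2(\cf_T)$: if $G\in\lp^2(\cf_T)$ satisfies $\e[G\,\mathcal E_T]=0$ for every choice of $g_0,g_1$, then $G=0$. Choosing $g_0,g_1$ piecewise constant (and complexified) and using the independence and stationarity of the increments of $B$ and $N$, the functional $\e[G\,\mathcal E_T]$ reduces to a joint Fourier--Laplace transform of the law of finitely many increments of $B$ and $N$; its vanishing forces $\e[G\mid \text{these increments}]=0$, and letting the mesh tend to zero, together with the fact that such increments generate $\cf_T$, gives $G=0$. Consequently the closed span of the exponentials, which is contained in $\mathcal H$, equals $\lp^2(\cf_T)$, so $\mathcal H=\lp^2(\cf_T)$ and existence follows.

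The main obstacle is the totality step: one must justify the Fourier uniqueness argument for the jump component, where the factor $e^{g_1}-1$ forces one to work with complex-valued $g_1$ and to check that the resulting exponential martingale is still well defined and square integrable. A secondary technical point is the $\lp^2$ estimate guaranteeing convergence of the iterated-integral series for $\mathcal E_T$, sharply enough to place the limit inside the closed span $\mathcal H$; here the orthogonality and isometry from the first paragraph are exactly what is needed to sum the tail.
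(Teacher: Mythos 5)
You should first note that the paper contains no proof of this proposition at all: it is quoted verbatim from \cite[Proposition 2.1]{LSUV_02}, and the only related in-paper material is Proposition \ref{chaos-expansion} (It\^o's multiple-integral chaos expansion, cited from \cite{Ito_56}) together with the identity \eqref{multiple-iterated-rel} linking multiple and iterated integrals. So there is no in-paper argument to compare yours against; what you have written is a self-contained proof of the cited result, and its architecture is the standard one: (i) level-wise It\^o isometries on the simplex give orthogonality of the ranges of $L_k^{\mathbf{i}_k}$ for distinct $(k,\mathbf{i}_k)$ and the isometry with factor $\kappa^{|\mathbf{i}_k|}$, whence uniqueness of the kernels and $\lp^2$-convergence of the series; (ii) Dol\'eans--Dade exponentials of $\int g_0\,dB+\int(e^{g_1}-1)\,d\tilde N$ expand, by Picard iteration of their linear SDE, into series of iterated integrals with deterministic kernels, hence lie in the closed span $\mathcal{H}$; (iii) these exponentials are total in $\lp^2(\cf_T)$, since the increments of $B$ and $N$ generate $\cf_T$. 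This is correct in outline, and the two obstacles you flag close in the standard way. For the totality step you do not in fact need complex $g_1$ from the outset: for bounded real piecewise-constant parameters the map $(g_0^i,g_1^i)\mapsto\e\big[G\exp\big(\sum_i g_0^i\Delta B_i+\sum_i g_1^i\Delta N_i\big)\big]$ is finite (Cauchy--Schwarz, since Gaussian and Poisson variables have entire moment generating functions) and analytic in the parameters, so its vanishing on all of $\rset^{2n}$ extends to purely imaginary values; Fourier uniqueness applied to the complex measure $A\mapsto\e\big[G\,\ind_A(\Delta B_1,\ldots,\Delta N_n)\big]$ then gives $\e[G\mid\sigma(\mathrm{increments})]=0$, and refining the partition plus martingale convergence yields $G=0$. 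For the expansion of $\mathcal E_T$, the remainder after $n$ Picard steps is controlled by $\sup_{t\le T}\e[\mathcal E_t^2]<\infty$ (true for bounded $g_0,g_1$) together with the simplex volume factor $T^k/k!$, so the series converges in $\lp^2$ and $\mathcal E_T\in\mathcal{H}$. Finally, note an alternative and shorter route that stays closer to the paper's own toolkit: take Proposition \ref{chaos-expansion} as given and deduce \eqref{eq0} from \eqref{multiple-iterated-rel} by symmetrizing kernels and restricting them to the simplex. Your route buys independence from It\^o's theorem at the price of carrying out the exponential-totality argument by hand; the paper's (implicit) route buys brevity by outsourcing exactly that work to the cited references.
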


 Let  $|\mathbf{i}_k|:=\sum_{j=1}^k i_j$.
Due to the isometry property it holds
\[ \|L_k^{ \mathbf{i}_k}(f)\|^2=\kappa^{|\mathbf{i}_k|}\|f\|^2_{\Sigma_k},\] and
for any $f \in L^2(\Sigma_k),$  $g \in L^2(\Sigma_m)$,   $\mathbf{i}_k \in \{0,1\}^k$, and $\mathbf{j}_m \in \{0,1\}^m$ we have (see
\cite[Proposition 1.1]{LSUV_02})
\begin{align*}
  \e[L_k^{\mathbf{i}_k}(f)L_m^{\mathbf{j}_m}(g)]
  =\left\{
    \begin{array}{cc}
      \kappa^{|\mathbf{i}_k|}\int_{\Sigma_k}f(t_1,\cdots,t_k)g(t_1,\cdots,t_k)dt_1\cdots
        dt_k
      & \mbox{ if $ \mathbf{i}_k = \mathbf{j}_m$}\\
      0 & \mbox{ otherwise.}\\
      \end{array}\right.
  \end{align*}
  Then, $\|F\|^2=\e[F]^2+\sum_{k\ge 1}\sum_{\mathbf{i}_k } \kappa^{|\mathbf{i}_k|}\|f_{\mathbf{i}_k}\|^2_{L^2(\Sigma_k)}$. 
The chaos approximation of $F$ up to order
$p$ is defined by
\begin{align}\label{eq5}
    \cc_p(F)&:=\e(F)+\sum_{k=1}^p\sum_{\mathbf{i}_k  }   L_k^{ {\mathbf{i}_k}}
  (f_{\mathbf{i}_k})
   \end{align}
  and  $P_k(F):= \sum_{\mathbf{i}_k} L_k^{ {\mathbf{i}_k}} 
  (f_{\mathbf{i}_k})$ is the Wiener chaos of order $k$ of $F$. We have 
  \begin{align}\label{L-two-of-P-n} 
  \e[(P_k(F))^2]=\sum_{\mathbf{i}_k  } \kappa^{|\mathbf{i}_k|} \|f_{\mathbf{i}_k}\|^2_{\Sigma_k}.
  \end{align}

\begin{itemize}
\item Let $f\in L^2(\Sigma_k)$ and $j \in \{0,1\}$. Following \cite{LSUV_02},
  we define the derivative of $L_k^{\mathbf{i}_k }(f)$ w.r.t. the Brownian
  motion and the Poisson process as the element of $ L^2(\Omega \times [0,T])$ given by
  \begin{align}\label{der_L}
    D^{(j)}_t L_k^{\mathbf{i}_k}(f)&=\sum_{l=1}^k \ind_{\{i_l=j\}}
    L_{k-1}^{i_1,\cdots,
      \widehat{i_l},\cdots,i_k}(f(\underbrace{\cdots}_{l-1},t,\cdots)),
  \end{align}
  where $\widehat{i}$ means that the $i$-th index is omitted.
\item Let $j\in \{0,1\}. $  We extend the definition of   $D^{(j)}$ to 
  \begin{align*}
    Dom\;D^{(j)}:=\left \{F \in L^2(\m F_T) \mbox{ satisfying \eqref{eq0} and }\sum_{k=1}^{\infty} \sum_{\mathbf{i}_k  }    \sum_{l=1}^k
    \ind_{\{i_l=j\}} \, \kappa^{|\mathbf{i}_k |} \| f_{{\mathbf{i}_k  }}\|^2_{\Sigma_k}< \infty \right \}.
  \end{align*}

    If  $F \in Dom\;D^{(j)}$ then
  \[\|F\|^2_{Dom\;D^{(j)}} := \e |F|^2 + \kappa^j \e\int_0^T |D^{(j)}_t F|^2 dt < \infty .\]
\item  $F$ with chaotic representation \eqref{eq0} belongs to $Dom\;D  =:  \D^{1,2}$ if $F$
  belongs to \\ $Dom \;D^{(0)} \cap Dom \; D^{(1)}$, i.e.
  \begin{align*}
 \|F\|^2_{\mathbb{D}^{1,2}} :=   \e |F|^2   +   \sum_{k=1}^{\infty} k \sum_{\mathbf{i}_k}   \kappa^{|\mathbf{i}_k|} \| f_{\mathbf{i}_k }\|^2_{\Sigma_k}< \infty.
  \end{align*}
  More generally, we define $\D^{m,2}$ as follows:
\item  Let $m \ge 1$. We say that $F$ satisfying \eqref{eq0} belongs to
  $\D^{m,2}$ if it holds
  \begin{align*}
    \norm{F}^2_{\mathbb{D}^{m,2}}:=\e |F|^2 + \sum_{l=1}^m  \sum_{k=l}^{\infty} \frac{k!}{(k-l)!}
   \sum_{\mathbf{i}_k} \kappa^{|\mathbf{i}_k|} \| f_{\mathbf{i}_k}\|^2_{\Sigma_k}< \infty.
     \end{align*} 
    We recall $$\mathbb{D}^{\infty,2}=\cap_{m=1}^{\infty}
  \mathbb{D}^{m,2}.$$
   We define for  $l \in \nset^*$  with $l \le m$ the seminorm $\| \cdot \|_{D^{l}}$ on $\mathbb{D}^{m,2}$ by  
   
     \begin{align}\label{not1}
     \|F\|^2_{D^{l}}:=\sum_{\substack{\mathbf{i}_l 
    }} \kappa^{|\mathbf{i}_l   |} \e
    \left(\int_0^T \cdots \int_0^T
      \left|D^{\mathbf{i}_l }_{t_1,\cdots,t_l}F\right|^2 dt_1\cdots dt_l\right) 
 =   \sum_{k=l}^{\infty}\frac{k!}{(k-l)!}  \sum_{\mathbf{i}_k} \kappa^{|\mathbf{i}_k|} \| f_{\mathbf{i}_k}\|^2_{\Sigma_k},
\end{align}
  where $D^{\mathbf{i}_l}_{t_1,\cdots,t_l}= D^{i_1}_{t_1} \cdots D^{i_l}_{t_l}  $ represents the
  multi-index Malliavin derivative.
 \begin{rem} By using this notation we have
      $\|F\|^2_{\D^{m,2}}=\e|F|^2+\sum_{l=1}^m \|F\|^2_{D^{l}}$.
  \end{rem}
   \end{itemize}
\begin{itemize}
\item  For $m \ge 1$ and $j \in \nset^*$ we define $\mathcal{D}^{m,j}$  as the
  space of all  $F \in \D^{m,2}$ such that
   \begin{align*}
    \norm{F}^j_{m,j}:=\sum_{1\le l \le m}
  \,  \sum_{\mathbf{i}_l \in \{0,1\}^l} \,\, 
   \operatorname*{ess \,sup}_{(t_1, \cdots, t_l) \in [0,T]^l} \e[|D^{\mathbf{i}_l}_{t_1,\cdots,t_l}
    F|^j]< \infty.
   \end{align*}
(Since $(\omega, t_1,...,t_l) \mapsto   (D^{\mathbf{i}_l}_{t_1,\cdots,t_l} F)(\omega)$ is regarded as an element of $L^2(\Omega \times [0,T]^l )$  w.r.t.~the measure $\p\otimes \lambda_d$
  ($\lambda_d$ denotes the Lebesgue measure on $\rset^d$) we use the  essential supremum  w.r.t.~$\lambda_d$.)
  
  \item $\mathcal{S}^{m,j}$ denotes the space of all triples of processes $(Y,Z,U)$
  belonging to $\spp^j_T(\rset)\times
  \hpp^j_T(\rset^d)\times
  \hpp^j_T(\rset)$ and such that
  \begin{align*}
    \norm{(Y,Z,U)}^j_{m,j}:&=\sum_{1\le l \le m}
    \sum_{\substack{  \mathbf{i}_l}}
    \operatorname*{ess \,sup}_{(t_1, \cdots, t_l) \in [0,T]^l} \norm{(D^{\mathbf{i}_l}_{t_1, \cdots, t_l}
       Y,D^{\mathbf{i}_l}_{t_1, \cdots, t_l} Z,D^{\mathbf{i}_l}_{t_1, \cdots, t_l} U)}^j_{\lp^j}<\infty,
   \end{align*}
   where $\| \cdot\|^j_{\lp^j}$ has been defined in \eqref{norme_L2_YZ}. We  denote $\mathcal{S}^{m,\infty}:=\cap_{j = 1}^\infty \mathcal{S}^{m,j}$.  
\end{itemize}

\begin{rem}\label{rem70}
 If 
$F:=g(G)$, where $g:\rset
  \rightarrow \rset$ is a $\cc^1_b$ function and $G \in \D^{1,2},$ we have  (following
  \cite[Proposition 5.1]{GL_11}) that
  \begin{align*}
    (D_t^{(0)} F,D_t^{(1)} F)=(g'(G)D^{(0)}_t G,\,g(G+D_t^{(1)}G)-g(G)).
  \end{align*} 
  Moreover, using Notation \eqref{not1}, we get
  \begin{align*}
    \| F\|^2_{D^1}=\|D^{(0)} F\|^2_{L^2(\Omega \times
      [0,T])}+\kappa \|D^{(1)} F\|^2_{L^2(\Omega \times
      [0,T])} \le \|g'\|^2_{\infty} \| G\|^2_{D^1}.
  \end{align*}
  More generally, if $g:\rset
  \rightarrow \rset$ is a $\cc^m_b$ function and $G \in \D^{m,2}$, we have
  \begin{align*}
    \| F\|^2_{D^m} \le C(m, \{\|g^{(k)}\|_{\infty}\}_{k \le m}, \|G\|_{\mathbb{D}^{m,2}}),
  \end{align*} 
where $C(m, \{\|g^{(k)}\|_{\infty}\}_{k \le m}, \|G\|_{\mathbb{D}^{m,2}}\!)$ is a constant depending on $m, \{\|g^{(k)}\|_{\infty}\!\}_{k \le m}$ and $\|G\|_{\mathbb{D}^{m,2}}.$
\end{rem}

  \begin{lemme}\label{lem2}
  Let $1 \le m \le p+1$ and $F \in \D^{m,2}$. We have
  \begin{align*}
    \e[|F-\cc_p(F)|^2] \le \frac{\norm{F}^2_{D^{m}}}{ (p+2-m) \cdots(p+1) }.
  \end{align*}
\end{lemme}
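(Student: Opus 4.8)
The plan is to reduce everything to the orthogonality of the iterated integrals and the explicit series formulas already recorded for the $L^2$-norm and for the seminorm $\|\cdot\|_{D^m}$, and then to compare the two series chaos-by-chaos. No analytic machinery is needed beyond these identities.

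First I would isolate the truncation residual. Since $\cc_p(F)$ retains exactly the chaos of orders $1,\dots,p$ from the representation \eqref{eq0}, subtracting it leaves the tail $F-\cc_p(F)=\sum_{k=p+1}^\infty P_k(F)$, where $P_k(F)=\sum_{\mathbf{i}_k}L_k^{\mathbf{i}_k}(f_{\mathbf{i}_k})$. Chaos of different orders are orthogonal, so applying \eqref{L-two-of-P-n} gives
\[ \e[|F-\cc_p(F)|^2]=\sum_{k=p+1}^\infty \e[(P_k(F))^2]=\sum_{k=p+1}^\infty \sum_{\mathbf{i}_k}\kappa^{|\mathbf{i}_k|}\|f_{\mathbf{i}_k}\|^2_{\Sigma_k}. \]

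Next I would bring in the weight $\frac{k!}{(k-m)!}$ that defines $\|F\|^2_{D^m}$ through \eqref{not1}. The key elementary observation is that for $k\ge m$ this weight is the product of $m$ consecutive integers, $\frac{k!}{(k-m)!}=k(k-1)\cdots(k-m+1)$, which is increasing in $k$; hence for every $k\ge p+1$ (and $m\le p+1\le k$ makes all indices valid)
\[ \frac{k!}{(k-m)!}\ge (p+1)p\cdots(p+2-m)=(p+2-m)\cdots(p+1). \]
Therefore, for each such $k$,
\[ \sum_{\mathbf{i}_k}\kappa^{|\mathbf{i}_k|}\|f_{\mathbf{i}_k}\|^2_{\Sigma_k}\le \frac{1}{(p+2-m)\cdots(p+1)}\,\frac{k!}{(k-m)!}\sum_{\mathbf{i}_k}\kappa^{|\mathbf{i}_k|}\|f_{\mathbf{i}_k}\|^2_{\Sigma_k}. \]

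Finally I would sum over $k\ge p+1$ and enlarge the summation range down to $k=m$, which turns the weighted tail into the full seminorm series \eqref{not1}:
\[ \e[|F-\cc_p(F)|^2]\le \frac{1}{(p+2-m)\cdots(p+1)}\sum_{k=p+1}^\infty \frac{k!}{(k-m)!}\sum_{\mathbf{i}_k}\kappa^{|\mathbf{i}_k|}\|f_{\mathbf{i}_k}\|^2_{\Sigma_k}\le \frac{\|F\|^2_{D^m}}{(p+2-m)\cdots(p+1)}, \]
which is the claimed bound. There is no real obstacle here; the only point requiring care is the combinatorial bookkeeping, namely checking that $\frac{k!}{(k-m)!}$ consists of exactly $m$ consecutive factors and that its minimum over $k\ge p+1$ yields the $m$-term product $(p+2-m)\cdots(p+1)$. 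The hypothesis $m\le p+1$ is used precisely to ensure $p+2-m\ge 1$, so that this denominator is a genuine positive integer and the estimate is non-vacuous.
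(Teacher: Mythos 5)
Your proof is correct and is essentially identical to the paper's: both reduce the truncation error to the orthogonal tail $\sum_{k\ge p+1}\sum_{\mathbf{i}_k}\kappa^{|\mathbf{i}_k|}\|f_{\mathbf{i}_k}\|^2_{\Sigma_k}$ via \eqref{L-two-of-P-n}, insert the weight $\frac{k!}{(k-m)!}$, bound its reciprocal by $\frac{1}{(p+2-m)\cdots(p+1)}$ for $k\ge p+1$, and enlarge the sum to $k\ge m$ to recover $\|F\|^2_{D^m}$ from \eqref{not1}. Your explicit remark that the hypothesis $m\le p+1$ guarantees $p+2-m\ge 1$ is a small clarification the paper leaves implicit, but the argument is the same.
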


\begin{proof}Using \eqref{L-two-of-P-n}, we get
  \begin{align*}
    \e[|F-\cc_p(F)|^2] &=\sum_{k \ge p+1} \e[P_k(F)^2]=\sum_{k \ge p+1} \sum_{\mathbf{i}_k}
    \kappa^{|\mathbf{i}_k|} \|f_{\mathbf{i}_k}\|^2_{\Sigma_k}\\
    &=\sum_{k \ge p+1}\frac{k!}{(k-m)!} \frac{(k-m)!}{k!} \sum_{\mathbf{i}_k}
    \kappa^{|\mathbf{i}_k|} \|f_{\mathbf{i}_k}\|^2_{\Sigma_k}\\
    &\le \frac{1}{ (p+2-m) \cdots(p+1)}\sum_{k \ge p+1} \frac{k!}{(k-m)!}\sum_{\mathbf{i}_k}
    \kappa^{|\mathbf{i}_k|} \|f_{\mathbf{i}_k}\|^2_{\Sigma_k}. \\
    &\le \frac{1}{(p+2-m) \cdots(p+1)}\sum_{k \ge m} \frac{k!}{(k-m)!}\sum_{\mathbf{i}_k}
    \kappa^{|\mathbf{i}_k|} \|f_{\mathbf{i}_k}\|^2_{\Sigma_k}. \\
     \end{align*}

    \end{proof}

\subsubsection{Multiple integrals} \label{multiple}
In the following, $\lambda$ denotes the Lebesgue measure. Setting 
\[
  M(ds,dx) := dG_0(s)  d\delta_0(x) + dG_1(s) d\delta_1(x)
\] 
we get an independent random measure in the sense of It\^o (see  \cite{Ito_56}). There exists a chaotic representation by multiple integrals w.r.t.~this random measure $M$ which is  equivalent to Proposition \ref{CRP-for-iterated}.

\begin{prop} (\cite{Ito_56}) \label{chaos-expansion}
Any $F \in L^2(\m F_T)  $ can be represented as 
\begin{align}\label{dec_F_I}
F = \e [F] + \sum_{k=1}^\infty I_k(g_k),
\end{align}
with $g_k \in (L^2)^{\otimes k}( \lambda \otimes(\delta_0 + \kappa \delta_1)):=   (L^2)^{\otimes k}([0,T]\times\{0,1\}, \m B ([0,T]) \otimes 2^{\{0,1\}}, \lambda \otimes(\delta_0 + \kappa \delta_1))$. 
This representation is unique if we assume that the functions $g_k(z_1, ...,z_k)$ with $z_i=(t_i,x_i) \in [0,T]\times\{0,1\}$ are symmetric. 
\end{prop}
\bigskip
For the definition of the  multiple integrals  $I_k(g_k)$ we refer to  \cite{Ito_56} or \cite{Sole}.  But  using  the  result that the representations  in Proposition \ref{CRP-for-iterated} and  \ref{chaos-expansion}  are both unique we conclude  for symmetric $g_k$  the relation
\begin{align} \label{multiple-iterated-rel}
 I_k(g_k)  = k! \sum_{\mathbf{i}_k}L_k^{\mathbf{i}_k}(g_k((\cdot, i_1),\cdots,(\cdot, i_k))),
 \end{align} 
  where $\mathbf{i}_k$ is defined in Proposition \ref{CRP-for-iterated}  and $$g_k(((t_1, i_1),\cdots,(t_k, i_k))) = k! f_{\mathbf{i}_k}(t_1,\ldots,t_k)   \quad \text{ on } 
\Sigma_k$$  with $f_{\mathbf{i}_k}$ from Proposition \ref{CRP-for-iterated}.

Moreover,   for symmetric $g_k \in (L^2)^{\otimes k}( \lambda \otimes(\delta_0 + \kappa \delta_1))$ and $f_m \in (L^2)^{\otimes m}( \lambda \otimes(\delta_0 + \kappa \delta_1))$
the relation 
  \begin{align} \label{isometry-formula}
  \e[ I_k(g_k)   I_m(f_m)   ]    =\left\{
    \begin{array}{cc}
    k!  \langle g_k,f_k \rangle_{ (L^2)^{\otimes k}( \lambda \otimes(\delta_0 + \kappa \delta_1))}       & \mbox{ if $ k = m$}\\
      0 & \mbox{ otherwise,}\\
      \end{array}\right.
  \end{align}
holds true.
If $F \in \D^{m,2}$, we combine \eqref{dec_F_I},
 \eqref{multiple-iterated-rel} and \cite[Definition 1.7]{LSUV_02} (which
 extends \eqref{der_L} to functions defined on $L^2([0,T])^k$) to get
 \begin{align} \label{kernel-Malliavin-rel}
 g_k((t_1,i_1),\ldots,(t_k,i_k))= \frac{1}{k!}  \e D^{\mathbf{i}_k}_{t_1,\ldots,t_k}F, \quad  k \le m.
  \end{align}
On the other hand, this can be easily derived  if one takes into account that for $F= \e [F] + \sum_{k=1}^\infty I_k(g_k)$ we have    $  D^{\mathbf{i}_1}_{t_1} F=  \sum_{k=1}^\infty k I_{k-1}(g_k((t_1,i_1), \cdot)),$ and  that the expectation of any multiple integral of order $k>0$ is zero while $I_0$ is the identity map.  
\bigskip

For the implementation of the numerical scheme  we will use Hermite and Charlier polynomials.  In order to do so, we provide a 
chaotic representation consisting only of iterated integrals of the form   $L^{0,...,0}$ and  $L^{1,...,1}$ for which the relations \eqref{iterated-Hermite}
and \eqref{iterated-Charlier} below can be used. \bigskip

Use $\{p_0, p_1\}= \{ \ind_{\{0\}},  \frac{1}{\sqrt{\kappa }} \ind_{\{1\}}  \}$ as orthonormal basis of $L^2(\{0,1\}, 2^{\{0,1\}}, \delta_0 + \kappa \delta_1)$ and 
fix an orthonormal basis $\{e_k\}_{k \in \nset} $ for $L^2([0,T], \m B ([0,T]),  \lambda)$. By setting
\[
   e[(k_1,i_1),\ldots,(k_m,i_m)]:= (e_{k_1}\otimes p_{i_1}) \otimes \ldots \otimes (e_{k_m}\otimes p_{i_m}), \quad k_j \in \nset, i_j \in \{0,1\}
\] 
we get an orthonormal basis of  $(L^2)^{\otimes m}( \lambda \otimes(\delta_0 + \kappa \delta_1)).$  The symmetrizations
\begin{equation} \label{symmetric-basis}
   \tilde e[(k_1,i_1),\ldots,(k_m,i_m)]:= \frac{1}{m!} \sum_{\pi \in \m S_m} e[(k_{\pi(1)},i_{\pi(1)}),\ldots,(k_{\pi(m)},i_{\pi(m)})],\quad k_j \in \nset, i_j \in \{0,1\} 
\end{equation}
form an {\it  orthogonal} basis of $\tilde{ (L^2)}^{\otimes m}( \lambda \otimes(\delta_0 + \kappa \delta_1)),$  the subspace of symmetric functions from
$(L^2)^{\otimes m}( \lambda \otimes(\delta_0 + \kappa \delta_1)).$ 

We also will use the notation 
\[
   \tilde e[k_1,\ldots,k_m]:=  \frac{1}{m!} \sum_{\pi \in \m S_m} e_{k_{\pi(1)}} \otimes \cdots \otimes   e_{k_{\pi(m)}}, \quad k_j \in \nset,
\]
where $\m S_m$ stands for the set of all permutations of $\{1,...,m\}.$
\begin{prop}\label{chaos-decomposition}  
Any $F \in L^2(\m F_T)  $ can be represented as 
\begin{align*}
F& = \e [F] + \sum_{k=1}^\infty \sum_{l=0}^k \, \sum_{{\bf k}_l \in \nset^l  }  \,\,  \sum_{{\bf j}_{k-l} \in \nset^{k-l}}   
  d_{{\bf k}_l,   {\bf j}_{k-l}} L^{0,...,0}_l(\tilde e[k_1,\ldots,k_l]) \, L^{1,...,1}_{k-l}(\tilde e[j_1,\ldots,j_{k-l}]),
\end{align*} 

where $d_{{\bf k}_l,   {\bf j}_{k-l}}= \frac{ l!(k-l)! \langle g_k,
  e[(k_1,0),....(k_l,0)]\otimes
  e[(j_1,1),...,(j_{k-l},1)]\rangle_{(L^2)^{\otimes k}}  }{ \kappa^{\frac{k-l}{2}} \|  \tilde e[(k_1,0),....(k_l,0),(j_1,1),...,(j_{k-l},1) ] \|^2_{(L^2)^{\otimes k}( \lambda \otimes(\delta_0 + \kappa \delta_1))}}.$

\end{prop}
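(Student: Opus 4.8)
The plan is to start from the multiple-integral chaos expansion of Proposition \ref{chaos-expansion}, namely $F=\e[F]+\sum_{k\ge 1}I_k(g_k)$ with each $g_k$ symmetric in $(L^2)^{\otimes k}(\lambda\otimes(\delta_0+\kappa\delta_1))$, and to re-express each $I_k(g_k)$ in terms of the iterated integrals $L^{0,\ldots,0}_l$ and $L^{1,\ldots,1}_{k-l}$ by expanding $g_k$ along the orthogonal symmetric basis \eqref{symmetric-basis}. Since $I_k$ is linear and automatically symmetrizes its argument, and since $g_k=\sum_{\beta}\frac{\langle g_k,\tilde e[\beta]\rangle}{\|\tilde e[\beta]\|^2}\,\tilde e[\beta]$ where $\beta=((k_1,i_1),\ldots,(k_k,i_k))$ runs over the basis multi-indices, it suffices to evaluate $I_k(\tilde e[\beta])$ for a single basis element and then sum over $\beta$.

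The key step is a factorization. I would group the entries of $\beta$ according to whether $i_a=0$ (Brownian direction) or $i_a=1$ (Poisson direction): writing $u:=e[(k_1,0),\ldots,(k_l,0)]$ for the $l$ factors in direction $0$ and $v:=e[(j_1,1),\ldots,(j_{k-l},1)]$ for the $k-l$ factors in direction $1$, one has $\tilde e[\beta]=\widetilde{u\otimes v}$, hence $I_k(\tilde e[\beta])=I_k(u\otimes v)$. Because $u$ and $v$ have disjoint supports in the $x$-variable, every contraction between $u$ and $v$ vanishes, so the product formula for multiple integrals over independent random measures (as in \cite{LPST_14}, \cite{peccati}) leaves only the tensor term; using the independence of $B$ and $\tilde N$ this gives $I_k(u\otimes v)=I^{(0)}_l(u)\,I^{(1)}_{k-l}(v)$, a product of a purely Brownian and a purely Poisson multiple integral. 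Applying the multiple-to-iterated conversion \eqref{multiple-iterated-rel} to each factor, in which only the all-$0$ index pattern survives for the first factor and the all-$1$ pattern for the second, turns these into $l!\,L^{0,\ldots,0}_l(\tilde e[k_1,\ldots,k_l])$ and $(k-l)!\,\kappa^{-(k-l)/2}\,L^{1,\ldots,1}_{k-l}(\tilde e[j_1,\ldots,j_{k-l}])$, the factor $\kappa^{-(k-l)/2}$ arising from the normalisation $p_1=\frac{1}{\sqrt\kappa}\ind_{\{1\}}$ and the factorials from the symmetrizations and the passage to the simplex.

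It then remains to collect constants. Because $g_k$ is symmetric, $\langle g_k,\tilde e[\beta]\rangle=\langle g_k,\,e[(k_1,0),\ldots,(k_l,0)]\otimes e[(j_1,1),\ldots,(j_{k-l},1)]\rangle$, so multiplying this coefficient by the factorial and $\kappa$ factors assembled above and matching against the product $L^{0,\ldots,0}_l(\tilde e[k_1,\ldots,k_l])\,L^{1,\ldots,1}_{k-l}(\tilde e[j_1,\ldots,j_{k-l}])$ identifies
\[
d_{\mathbf k_l,\mathbf j_{k-l}}=\frac{l!\,(k-l)!\,\langle g_k,\,e[(k_1,0),\ldots,(k_l,0)]\otimes e[(j_1,1),\ldots,(j_{k-l},1)]\rangle}{\kappa^{(k-l)/2}\,\|\tilde e[(k_1,0),\ldots,(k_l,0),(j_1,1),\ldots,(j_{k-l},1)]\|^2_{(L^2)^{\otimes k}(\lambda\otimes(\delta_0+\kappa\delta_1))}},
\]
as claimed. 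The $L^2$-convergence of the resulting triple sum over $k$, $l$ and the index tuples is inherited from Proposition \ref{chaos-expansion} together with the isometry \eqref{isometry-formula}.

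I expect the main obstacle to be precisely the factorization step together with the combinatorial bookkeeping: one must argue carefully that symmetrizing a kernel that mixes the two directions produces no spurious diagonal terms, so that the multiple integral genuinely splits as a product over the two independent noises, and then track the several factorial and $\kappa$ factors introduced by the symmetrizations, by the normalisation of $p_1$, and by passing between multiple integrals on the full index set and iterated integrals on the simplex, so that they combine into exactly the stated $d_{\mathbf k_l,\mathbf j_{k-l}}$. By comparison, the analytic convergence of the series is routine.
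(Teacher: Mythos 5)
Your proposal is correct and follows essentially the same route as the paper's proof: expand $g_k$ in the orthogonal symmetric basis, use permutation invariance of $I_k$ plus the Lee--Shih product formula (all contractions vanish by disjoint supports in the $\{0,1\}$ variable) to factor $I_k(\tilde e[\beta])$ into a Brownian and a Poisson multiple integral, convert each factor to an iterated integral via \eqref{multiple-iterated-rel}, and collect the $l!\,(k-l)!$ and $\kappa^{-(k-l)/2}$ constants. The only cosmetic difference is that the paper additionally evaluates the normalizing constant $\|\tilde e[\cdot]\|^{-2}$ explicitly in Lemma \ref{c_kj-norm}, which you leave symbolic exactly as in the statement.
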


\begin{proof}

According to  \cite[ Theorem 1]{Ito_56}  a permutation of the coordinates of the kernels  does not change the multiple integral, i.e.   for any $\pi \in \m S_k$ we have 
\[ 
I_k( \tilde e[(k_1,i_1),\ldots,(k_k,i_k)]) = I_k(e[(k_{\pi(1)},i_{\pi(1)}),\ldots,(k_{\pi(k)},i_{\pi(k)})]).
 \]
 For any  $\pi$ with $( i_{\pi(1)},\ldots,i_{\pi(k)})=(0,\ldots,0,1,\ldots,1)$ (we assume that $(i_1, \ldots,i_k)$ contains $l$ zeros)
 it holds by   the product formula 
for multiple integrals  (see  \ref{LeeShihformula} or \cite[Theorem 3.6]{LS_04}) 
 \begin{equation} \label{product-formula-cons}
I_k( \tilde e[(k_1,i_1),\ldots,(k_k,i_k)]) = I_l(e[(k_{\pi(1)},0),\ldots,(k_{\pi(l)},0)]) I_{k-l}(e[(k_{\pi(l+1)},1),\ldots,(k_{\pi(k)},1)])
 \end{equation} 
since 
\[e[(k_{\pi(1)},i_{\pi(1)}),\ldots,(k_{\pi(k)},i_{\pi(k)})] = e[(k_{\pi(1)},0),\ldots,(k_{\pi(l)},0)] \otimes e[(k_{\pi(l+1)},1),\ldots,(k_{\pi(k)},1)],\]
and  for the contraction-identification $ \otimes_m^r$ (for the definition see \eqref{contraction-identification}) it holds
\[e[(k_{\pi(1)},0),\ldots,(k_{\pi(l)},0)] \otimes_m^r e[(k_{\pi(l+1)},1),\ldots,(k_{\pi(k)},1)] =0  \]
if $r \neq 0$ or $m \neq 0.$ Since
\[
e[(k_{\pi(l+1)},1),\ldots,(k_{\pi(k)},1)]= \frac{1}{\kappa^{\frac{k-l}{2}}} e[k_{\pi(l+1)},\ldots,k_{\pi(k)}],
\]
we conclude from \eqref{product-formula-cons} and   \eqref{multiple-iterated-rel}   that
 \begin{align} \label{multiple=product-of-iterated}
I_k( \tilde e[(k_1,i_1),\ldots,(k_k,i_k)]) =\frac{l!(k-l)!}{\kappa^{\frac{k-l}{2}}} L^{0,...,0}_l(\tilde e[k_{\pi(1)},\ldots,k_{\pi(l)}]) L^{1,...,1}_{k-l}(\tilde e[k_{\pi(l+1)},\ldots,k_{\pi(k)}]).
\end{align}

The symmetric functions $g_k$ from Proposition \ref{chaos-expansion} can be written as
\begin{align} \label{g_k-expansion}
     g_k =& \sum_{l=0}^k  \sum_{{\bf k}_l} \sum_{{\bf j}_{k-l}} \langle g_k, e[(k_1,0),...,(k_l,0)]\otimes e[(j_1,1),...,(j_{k-l},1)]\rangle_{(L^2)^{\otimes k}}  \nonumber \\
     &\quad\quad \quad\quad\quad \quad \times \tilde e[(k_1,0),...,(k_l,0),(j_1,1),...,(j_{k-l},1)] c_{{\bf k}_l,   {\bf j}_{k-l}},
\end{align}
where we sum over all  ${\bf k}_l \in \nset^l$ and ${\bf j}_{k-l} \in \nset^{k-l}$ and  
\[c_{{\bf k}_l,   {\bf j}_{k-l}}  = \|  \tilde
e[(k_1,0),...,(k_l,0),(j_1,1),...,(j_{k-l},1) ] \|^{-2}_{(L^2)^{\otimes k}(
  \lambda \otimes(\delta_0 + \kappa \delta_1))} \]
denotes the normalizing factor. 
Abbreviating 
\[
    d_{{\bf k}_l,   {\bf j}_{k-l}} := \frac{l!(k-l)!}{\kappa^{\frac{k-l}{2}}} \langle g_k, e[(k_1,0),...,(k_l,0)]\otimes e[(j_1,1),...,(j_{k-l},1)]\rangle_{(L^2)^{\otimes k}} \,  c_{{\bf k}_l,{\bf j}_{k-l}} 
\]
 we conclude from Proposition \ref{chaos-expansion}, \eqref{multiple=product-of-iterated}  and  \eqref{g_k-expansion}  the orthogonal decomposition
 \begin{align}\label{dec_F_mult_int}
F& = \e [F] + \sum_{k=1}^\infty \sum_{l=0}^k  \sum_{{\bf k}_l} \sum_{{\bf j}_{k-l}}     d_{{\bf k}_l,   {\bf j}_{k-l}} L^{0,...,0}_l(\tilde e[k_1,\ldots,k_l]) \, L^{1,...,1}_{k-l}(\tilde e[j_1,\ldots,j_{k-l}]).
\end{align} 
 
 \end{proof}
\bigskip

\begin{lemme}\label{c_kj-norm}
Fix $N \in \nset^*$ and  let
\[e[(k_1,0),...,(k_l,0),(j_1,1),...,(j_{k-l},1) ] =  \bigotimes_{i=1}^N
(e_i\otimes p_0)^{\otimes n^B_i}\otimes  \bigotimes_{j=1}^N (e_j\otimes p_1)^{\otimes n^P_j}, \]
i.e.   $n^B_i$ and $n^P_i$  $(1\le i \le N)$ denote the multiplicities  of the functions $e_i\otimes p_0$ and $e_i\otimes p_1$, respectively, so that
$|{\bf n}^B|=|(n_1^B,...,n_N^B)|=l$ and $|{\bf n}^P|=|(n_1^P,...,n_N^P)|=k-l.$ Let ${\bf n}^A!:=n^A_1!\cdots n^A_N!$ for $A=B,P$ and define ${\bf n}:=({\bf n}^B,{\bf n}^P)$ so that $|{\bf n}|=|{\bf n}^B|+|{\bf n}^P|.$
Then 

 \[c_{{\bf k}_l,   {\bf j}_{k-l}}  = \|  \tilde
e[(k_1,0),...,(k_l,0),(j_1,1),...,(j_{k-l},1)] \|^{-2}_{(L^2)^{\otimes k}(
  \lambda \otimes(\delta_0 + \kappa \delta_1))}  = \frac{|{\bf n}|!}{{\bf n}^B ! \, {\bf n}^P!}.\]
\end{lemme}

\begin{proof}
 To compute $c_{{\bf k}_l,   {\bf j}_{k-l}} $ notice that the functions $h_j:= (e_{k_j}\otimes p_{i_j})$ and $h_{j'} $  $(1 \le j, j' \le k)$ are either 
equal or orthogonal in $L^2( \lambda \otimes(\delta_0 + \kappa \delta_1)).$ Denoting  
\[ e[(k_1,0),...,(k_l,0),(j_1,1),...,(j_{k-l},1) ] = h_1\otimes \cdots \otimes h_k \] 
 yields
\begin{align*}
& \|  \tilde e[(k_1,0),...,(k_l,0),(j_1,1),...,(j_{k-l},1)] 
 \|^2_{(L^2)^{\otimes k}(
  \lambda \otimes(\delta_0 + \kappa \delta_1))} \\
   &= \bigg \|  \frac{1}{k!} \sum_{\pi \in \m S_k} h_{\pi(1)} \otimes \cdots \otimes   h_{\pi(k)} \bigg \|^2_{(L^2)^{\otimes k}(
  \lambda \otimes(\delta_0 + \kappa \delta_1))} \\
  &= \frac{n^B ! n^P! }{k!} \bigg \|  \bigotimes_{i=1}^N (e_i\otimes p_0)^{\otimes n^B_i}\otimes  \bigotimes_{j=1}^N (e_j\otimes p_0)^{\otimes n^P_j}  \bigg \|^2_{(L^2)^{\otimes k}(
  \lambda \otimes(\delta_0 + \kappa \delta_1))} \\
  &=  \frac{{\bf n}^B ! {\bf n}^P! }{k!}.
\end{align*}
\end{proof}
\bigskip

 \begin{rem}\label{rem0}
   We deduce from \eqref{dec_F_mult_int} that
   \begin{align*}
     \cc_p(F)& = \e [F] + \sum_{k=1}^p \sum_{l=0}^k  \sum_{{\bf k}_l} \sum_{{\bf j}_{k-l}}     d_{{\bf k}_l,   {\bf j}_{k-l}} L^{0,...,0}_l(\tilde e[k_1,\ldots,k_l]) 
             L^{1,...,1}_{k-l}(\tilde e[j_1,\ldots,j_{k-l}]).
   \end{align*}
 \end{rem}
 
 In order to compute the expectation of products of multiple integrals (see formula \eqref{expectation-product-formula} below)
 we introduce some notation following \cite{LPST_14}, \cite{S_13}, \cite{peccati}
 and \cite{nualart_06}.
 \begin{itemize}
 \item If $n \in \nset^*$   then $[n]:=\{1,\ldots, n\}.$
 \item For $J \subseteq [n]$ we denote by $O^J_n$ the singleton containing that $x \in \{0,1\}^n$ for which
$x_i =0 \iff i \in J$ holds.
\item If $n_1, \ldots,n_l$ ($l\in\nset^*$) are given and $n:=n_1+\cdots+n_l$ we will
denote by $\Psi$ the `natural' partition of $[n]$  given by the summands $n_i:$
\begin{align*}
\Psi &:=\{ \Psi_1, \ldots, \Psi_l\} \\
&:=\{ \{1,\ldots, n_1\}, \ldots, \{n_1+\cdots+n_{l-1}+1, \ldots, n\}\}.
\end{align*} 
\item Let $\Pi_n$ denote the set of all partitions of $[n]$ (a partition means here a  set of disjoint non-empty subsets of $[n]$ such that their union is $[n]$) and $\Pi_n^*$ denote the set of all subpartitions of $[n]$  (any  set of disjoint non-empty subsets of $[n]$  is a subpartition).
\item  Let  $\Pi(n_1,\ldots,n_l) \subseteq \Pi_n$ (respectively $\Pi^*(n_1,\ldots,n_l) \subseteq \Pi^*_n$) denote the set of all $\sigma \in \Pi_n$ (respectively   $\sigma \in \Pi^*_n$)  with $|\Psi_i \cap J| \le 1$   for  $1\le i \le l$ and all $J \in \sigma.$  
\item Let $\Pi_{\ge 2}(n_1,\ldots,n_l) $    (respectively $\Pi_{=2}(n_1,\ldots,n_l)$) denote the set of all $\sigma \in \Pi(n_1,\ldots,n_l)$  with $|J| \ge  2$ (respectively $|J| = 2$) for all $J \in \sigma.$ 
\item In order to distinguish between integration w.r.t. the Brownian motion and  compensated Poisson process we consider for $J^B \subseteq [n]$ ($J^B$ will stand for integration w.r.t~the Brownian motion)
and introduce $\Pi_{=2,\ge 2}(J^B; n_1,\ldots,n_l) $  as the set of all pairs ($\tau, \sigma$) of subpartitions from $\Pi^*_n(n_1,\ldots,n_l)$ such that 
 for all $J \in \tau$:  $|J| = 2$ and $\bigcup_{J \in \tau}J=J^B$  as well as  for all $J \in \sigma$: $|J| \ge 2$  and  $\bigcup_{J \in \sigma}J=[n] \setminus J^B.$
 
 \item For $\tau \in \Pi_n^*$ let  $|\tau| =\#\{J\subseteq [n] : J \in \tau\}$ i.e. the number of its blocks
   and  $\|\tau\| := \#\bigcup_{J \in \tau}J.$

 \item For $(\tau, \sigma) \in \Pi_{=2,\ge 2}(J^B; n_1,\ldots,n_l)$ and $f: ([0,T] \times \{0,1\})^n \to \rset$  we define 
       $f_{\tau \cup\sigma} : [0,T]^{{|\tau|+|\sigma|}} \to \rset$ by identifying the time variables of each block of $\tau \cup\sigma$
       and setting $x_i=0$ for $i \in \bigcup_{J \in \tau}J  $ and $x_i=1 $ for $i \in \bigcup_{J \in \sigma}J.  $ In order to make this map unique we identify first 
        the time variables of that block of $\tau \cup\sigma$ which contains the smallest number and denote all identified variables by $t_1.$ Next we choose  from the remaining  blocks  that  one containing the smallest number and use $t_2$ for all identified variables and so on. 
 \end{itemize}

 Example:  Let $n_1= 2, n_2=2$ and $n_3=3.$ Then $\Psi= \{\{1,2\},\{3,4\},\{5,6,7\}\}.$
 If  $J^B=\{2,4,6,7\}$ and  $\tau =\{\{2,6\},\! \{4,7\}\!\}$, $\sigma=\! \{\{1,3,5\}\}$
 we change by $\tau \cup\sigma$  the function $f((t_1,x_1), \cdots,$ $ (t_7
 ,x_7))$  
 into 
 \[
 f_{\tau \cup\sigma}(t_1,t_2,t_3 )= f((t_1,1),(t_2,0),( t_1,1),( t_3,0),(t_1,1),(t_2,0),(t_3,0)). 
 \]
 \begin{prop} \label{E-of-chaos-products} Let $f_{n_i} \in (L^2)^{\otimes {n_i}}( \lambda \otimes(\delta_0 + \kappa \delta_1))$ 
($n_i \in \nset$ for $1 \le i \le l$) be symmetric and assume that for all $ (\tau,\sigma) \in \Pi_{=2,\ge 2}(J^B; n_1,\ldots,n_l)$ it holds
 \[
    \int_{[0,T]^{|\tau|+|\sigma|}} \bigg ( \bigotimes_{i=1}^l | f_{n_i} |\bigg )_{\tau \cup\sigma}  d\lambda^{|\tau|+|\sigma|} <\infty.
 \]
Then 
\begin{align}  \label{expectation-product-formula}
\e \Pi_{i=1}^l    I_{n_i}(f_{n_i})  =  \sum_{J^B \in [n]} \,\, \sum_{(\tau,\sigma) \in \Pi_{=2,\ge 2}(J^B; n_1,\ldots,n_l)}
 \kappa^{|\sigma|}  \int_{[0,T]^{|\tau|+|\sigma|}} \bigg ( \bigotimes_{i=1}^l f_{n_i} \bigg )_{\tau \cup\sigma}  d\lambda^{|\tau|+|\sigma|} .
\end{align} 
\end{prop}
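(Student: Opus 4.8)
The plan is to derive \eqref{expectation-product-formula} by combining the moment formula for products of Brownian (Wiener--It\^o) multiple integrals due to Peccati and Taqqu \cite{peccati} with the moment formula for products of Poisson multiple integrals of Last, Penrose, Schulte and Th\"ale \cite{LPST_14}, gluing the two together via the independence of $B$ and $N$.

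First I would separate, inside each factor $I_{n_i}(f_{n_i})$, the integration variables carrying the Brownian label $x=0$ from those carrying the Poisson label $x=1$. Using the relation \eqref{multiple-iterated-rel} between multiple integrals w.r.t.~$M$ and iterated integrals w.r.t.~$G_0=B$ and $G_1=\tilde N$, each $I_{n_i}(f_{n_i})$ can be written as a sum, over all ways of declaring a subset $A_i\subseteq\Psi_i$ of its variables ``Brownian'' and the complement ``Poisson'', of a product of a purely Brownian multiple integral in the variables of $A_i$ and a purely Poisson multiple integral in the variables of $\Psi_i\setminus A_i$. Since $B$ and $N$ are independent, after expanding the product $\prod_{i=1}^l I_{n_i}(f_{n_i})$ and fixing the global subset $J^B:=\bigcup_i A_i\subseteq[n]$ of Brownian variables, the expectation factorizes into a Brownian expectation over the variables in $J^B$ times a Poisson expectation over the variables in $[n]\setminus J^B$.

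Next I would evaluate the two factors by the cited moment formulas. For the Brownian factor, the Peccati--Taqqu formula writes the expectation of a product of Wiener multiple integrals as a sum over the complete pairings of the involved variables; because a multiple integral has no diagonal, only pairings joining variables of two distinct factors survive, which is exactly the condition $|\Psi_i\cap J|\le 1$ for $J\in\tau$ built into $\Pi_{=2,\ge 2}$, and each pairing $\tau$ (all blocks of size $2$) contributes $\int_{[0,T]^{|\tau|}}(\bigotimes_i f_{n_i})_{\tau}\,d\lambda^{|\tau|}$ with the time variables of each pair identified and $x=0$ inserted. For the Poisson factor, the formula of \cite{LPST_14} gives a sum over partitions $\sigma$ of the Poisson variables into blocks of size at least $2$ — the higher cumulants of the compensated Poisson process no longer vanish — again with no two variables from the same factor in one block; here each block contributes one factor $\kappa$ (all cumulants of $\tilde N$ equal $\kappa$ times Lebesgue measure) together with the identified-time integral, producing the weight $\kappa^{|\sigma|}$ and the kernel values at $x=1$. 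The integrability assumption guarantees absolute convergence, so Fubini's theorem and the interchange of the finite combinatorial sums with the integrals are legitimate.

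Finally, summing over all subsets $J^B\subseteq[n]$ and, for each, over the pairs $(\tau,\sigma)\in\Pi_{=2,\ge 2}(J^B;n_1,\dots,n_l)$ and collecting the Brownian and Poisson contributions gives precisely the right-hand side of \eqref{expectation-product-formula}. The main obstacle I anticipate is purely bookkeeping: one must check that the symmetrization constants $n_i!$ appearing in \eqref{multiple-iterated-rel}, the normalizations hidden in the two cited moment formulas, and the two sources of $\kappa$ (the reference measure $\delta_0+\kappa\delta_1$ on the label space and the Poisson cumulants) combine to leave exactly one power of $\kappa$ per block of $\sigma$ and the plain Lebesgue integrals $d\lambda^{|\tau|+|\sigma|}$, with no residual factor. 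Getting these constants to cancel cleanly, and verifying that mixed Brownian--Poisson blocks contribute nothing — which is what forces the clean split into $\tau$ and $\sigma$ — is the delicate part.
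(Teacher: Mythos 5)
Your overall strategy is the one the paper uses: split the label variables of each factor into a Brownian group and a Poisson group, factor the expectation by independence of $B$ and $N$, evaluate the two factors with the Peccati--Taqqu and Last--Penrose--Schulte--Th\"ale moment formulas, and reassemble the sum over $J^B$ and $(\tau,\sigma)$. There is, however, one step that does not hold as you state it: for a general symmetric kernel $f_{n_i}$ the restriction $f_{n_i}\ind_{O^{A_i}_{n_i}}$ is \emph{not} a tensor product of a function of the Brownian-labelled variables with a function of the Poisson-labelled variables, so $I_{n_i}(f_{n_i}\ind_{O^{A_i}_{n_i}})$ is a genuinely mixed multiple integral and does not factor as $I^B(\cdot)\,I^P(\cdot)$; consequently the two cited moment formulas, which are stated for products of multiple integrals of deterministic kernels with respect to a single process, do not apply directly. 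The paper circumvents this by first proving \eqref{expectation-product-formula} for kernels of the special form $f_{n_i}=d_i^{\otimes n_i}$ --- for which the restricted kernels do split and the factorization $I_{n_i}(d_i^{\otimes n_i}\ind_{O^{J_i}_{n_i}}) = I^B_{n_i^0}([d_i(\cdot,0)]^{\otimes n_i^0})\, I^P_{n_i^1}([d_i(\cdot,1)]^{\otimes n_i^1})$ follows from the Lee--Shih product formula because all nontrivial contractions vanish --- and then extending to general symmetric kernels by using that both sides of \eqref{expectation-product-formula} are multilinear in $(f_{n_1},\ldots,f_{n_l})$ and that tensor products are total in $(L^2)^{\otimes n_i}$. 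Adding this reduction (or, alternatively, a conditioning argument on one of the two processes to justify the factorization of the expectation for mixed integrals) closes the gap; your bookkeeping of the constants and of the origin of the weight $\kappa^{|\sigma|}$ is otherwise consistent with the paper's computation.
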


\begin{proof}
Let us assume for the moment that the $f_{n_i}$ are of the form  
 \begin{align} \label{tensor-f}
f_{n_i}((t_1,x_1), \cdots,(t_{n_i},x_{n_i}))   = \Pi_{k=1}^{n_i}  d_i(t_k,x_k)
\end{align} 
for some $d_i \in  L^2( \lambda \otimes(\delta_0 + \kappa \delta_1)).$

 If  $J_i \subseteq [n_i] $ and  $n_i^0= \#J_i,$ we let   $ I^B_{n_i^0}$ denote the multiple integral of order $n_i^0$  w.r.t.~the Brownian motion and 
 $ I^P_{n_i^1}$ the multiple integral of order $n_i^1$ ($n_i^1:= n_i -n_i^0$) w.r.t.~the compound Poisson process. Similar to 
\eqref{product-formula-cons} we get

\[
   I_{n_i} (   d_i^{ \otimes n_i}   \ind_{O^{J_i}_{n_i}}) =  I^B_{n_i^0}( [d_i(\cdot,0)]^{\otimes  n_i^0}) I^P_{n_i^1}( [d_i(\cdot,1)]^{  \otimes  n_i^1 } ). 
 \]
 Consequently, since

 \[
   \sum_{J_i \subseteq [n_i] } \ind_{O^{J_i}_{n_i}}(x) = 1, \quad x\in \{0,1\}^{n_i},
\]
 
 \begin{align*}
\e \Pi_{i=1}^l    I_{n_i}(f_{n_i})  & =   \sum_{J_1 \subseteq [n_1], \ldots, J_l \subseteq [n_l]}   \e \Pi_{i=1}^l    I_{n_i}(   (\otimes_{k=1}^{n_i}  d_i) \ind_{O^{J_i}_{n_i}}  ) \\
& =  \sum_{J_1 \subseteq [n_1], \ldots, J_l \subseteq [n_l]}   \e \Pi_{i=1}^l \big \{ I^B_{n_i^0}( [d_i(\cdot,0)]^{\otimes  n_i^0})
 I^P_{n_i^1}( [d_i(\cdot,1)]^{  \otimes  n_i^1 } ) \big \} \\
& =  \sum_{J_1 \subseteq [n_1], \ldots, J_l \subseteq [n_l]}   \e [ \Pi_{i=1}^l  I^B_{n_i^0}( [d_i(\cdot,0)]^{\otimes  n_i^0})]  \,\,  \e [ \Pi_{i=1}^l  I^P_{n_i^1}( [d_i(\cdot,1)]^{  \otimes  n_i^1 } )  ].
\end{align*}

From \cite[Corollary 7.3.2]{peccati} we conclude
 \begin{align*}
\e  \Pi_{i=1}^l I^B_{n_i^0}( [d_i(\cdot,0)]^{\otimes  n_i^0}) =  \sum_{\tau \in \Pi_{=2}(n^0_1,\ldots,n^0_l) }   \int_{[0,T]^{|\tau|}} \bigg ( \bigotimes_{i=1}^l     
 d_i^{\otimes  n_i^0} \bigg )_{\tau}  d\lambda^{|\tau|},
\end{align*} 
while \cite[Theorem 3.1]{LPST_14} (see also \cite[Section 3.2]{S_13}) implies 
 \begin{align*}
 \e  \Pi_{i=1}^l  I^P_{n_i^1}( [d_i(\cdot,1)]^{  \otimes  n_i^1 } )   =  \sum_{\sigma \in \Pi_{\ge 2}(n^1_1,\ldots,n^1_l) }  
    \kappa^{|\sigma|} \int_{[0,T]^{|\sigma|}} \bigg ( \bigotimes_{i=1}^l     
 d_i^{\otimes  n_i^1} \bigg )_{\sigma}  d\lambda^{|\sigma|}.
\end{align*} 

So we have shown relation \eqref{expectation-product-formula} for the special situation \eqref{tensor-f} where each $f_{n_i}$ is given as tensor product
$d_i^{\otimes n_i}. $ The general assertion follows by approximation using the multilinear nature  of    \eqref{expectation-product-formula} w.r.t. ($f_{n_1},\ldots ,f_{n_l}$).
\end{proof}

\subsection{Hermite and Charlier polynomials}
\subsubsection{Hermite polynomials}\label{sec:hermite_pol}

Let us introduce the Hermite polynomials $(K_m)_{m \in \nset}$ defined by
\begin{align*}
  e^{xt-\frac{t^2}{2}}=\sum_{m \ge 0} K_m(x) t^m, \quad t,x \in \rset.
\end{align*}
With the convention $K_{-1} = 0$  we have the relations $K''_m(x)-x K'_m(x)+mK_m(x)=0$ 
and $K'_m(x)=K_{m-1}(x)$, for all $m  \in \nset$. The normalized sequence $(\sqrt{m!} K_m)_{m \in \nset}$ forms an orthonormal basis 
in $\lp^2(\rset,\mu)$, where $\mu$ denotes the  normalized centered Gaussian
 measure.

 Every square integrable random variable $F$, measurable with respect to ${\m
   F}^B_T$, admits the following orthogonal decomposition
\begin{equation}
  \label{eq4}
  F = d_0 + \sum_{k=1}^\infty \sum_{|{\bf n}| = k} d_k^{\bf n} \, \prod_{i\geq 1} K_{n_i}\left(\int_0^T e_i(s)dB_s\right),
\end{equation}
where ${\bf n}=\{n_i\}_{i\geq 1}$ is a sequence of non-negative integers, $|{\bf n}|:=\sum_{i\geq 1} n_i$  and $\{e_i\}_{i \ge 1}$ is an orthonormal basis of $L^2(0,T).$  
Taking into account the normalization of the Hermite polynomials we use, we get
\begin{equation*}
	d_0 = \e\left[F\right],\qquad d_k^{\bf n} = {\bf n} !\, \e\left[F \times \prod\nl_{i\geq 1} K_{n_i}\left(\int_0^T e_i(s)dB_s\right)\right],
\end{equation*}
where ${\bf n} ! = \prod_{i\geq 1} (n_i !)$. \bigskip

Now we choose $N \in \nset$ and let $\{\ov{t}_0,\ov{t}_1,\cdots,\ov{t}_N\}$ be a regular grid of $[0,T]$, i.e. $\forall i
 \in \{0,\ldots,N\},$ $\ov{t}_i=ih$ where $h=\frac{T}{N}.$ From now on we will use a fixed orthonormal basis $\{e_i\}_{i \ge 1}$ of $L^2(0,T):$ we set
\begin{equation} \label{basis-fixed}
 e_i(t):=\frac{1}{\sqrt{h}}\ind_{]\ov{t}_{i-1},\ov{t}_i]}(t), \quad i \in \{0,\ldots,N\}
\end{equation}
 and complete  this sequence to a basis in $L^2(0,T),$ for example, by using the Haar basis on each interval $]\ov{t}_{i-1},\ov{t}_i].$ 
Let ${\bf n}^B=(n^B_1,\ldots, n^B_N)$ be the
 vector of non-negative integers such that $|{\bf n}^B|=k$.
Then (see \cite[Proposition 5.1.3]{privault})
  \begin{align}\label{iterated-Hermite}
    L^{0,\cdots,0}_k(e_1^{\otimes n^B_1}\circ \cdots \circ
   e_N^{\otimes n^B_N})=  \frac{{\bf n}^B! }{|{\bf n}^B|!} \prod_{i=1}^N K_{n^B_i}\left(\frac{\Delta
     B_i}{\sqrt{h}}\right),
\end{align}
where $\Delta B_i=B_{\ov{t}_i}-B_{\ov{t}_{i-1}}$ and $\circ$ stands for the symmetric tensor product.

 \subsubsection{Charlier polynomials}
 \begin{definition}
   The Charlier polynomial of order $m \in \nset$ and of parameter $t\ge 0$
   is defined by
   \begin{align*}
     C_0(x,t)=1,\;C_1(x,t)=x-t, \quad x \in \rset
   \end{align*}
   and by the relation
   \begin{align*}
     C_{m+1}(x,t)=(x-m-t)C_m(x,t)-mtC_{m-1}(x,t).
   \end{align*}
 \end{definition}
The sequence
$\left \{\frac{1}{\sqrt{m! (\kappa t)^m}}C_m(\cdot,\kappa t)\right \}_{m \in \nset}$ is an
orthonormal basis for $L^2(\nset,\nu_{\kappa t})$, where $\nu_{\kappa t}$
denotes the law of a Poisson random variable with parameter $\kappa t$. Let ${\bf n}^P=(n^P_1,\ldots, n^P_N)$ be the
 vector of non-negative integers such that $|{\bf n}^P|=k$.
 Using the same grid  and the same functions $\{e_i\}_{1 \le i \le N}$ as for \eqref{iterated-Hermite}, we have (see \cite[Proposition 6.2.9]{privault})
\begin{align} \label{iterated-Charlier}
   L^{1,\ldots,1}_{k}(e_1^{\otimes n^P_1}\circ \cdots \circ
   e_N^{\otimes n^P_N})=
   \frac{1}{ |{\bf n}^P|!  \, h^{\frac{|{\bf n}^P|}{2}}}
   \prod_{i=1}^N C_{n^P_i}\left(\Delta
     N_i,\kappa h\right),
\end{align}
where   $\Delta N_i=N_{\ov{t}_i}-N_{\ov{t}_{i-1}}$. The following Lemma gives some useful properties of the chaos decomposition.
\begin{lemme}\label{lem4}\hfill
  \begin{itemize}
  \item  Let $F$ be a r.v. in $\lp^2(\m F_T)$. $\forall p \ge 1$, we have $
    \e(|\cc_p(F)|^2) \le \e(|F|^2)$. 
  \item Let $H$ be in $\hpp^2_T(\rset)$. We deduce from Remark \ref{rem0} that $\cc_p \left(\int_0^T H_s
      ds\right)=\int_0^T \cc_p (H_s) ds$. 
  \item For all $F \in \mathbb{D}^{1,2}$, for all $i \in \{0,1\}$ and for all $t \le r$, $D^{(i)}_t \e_r[\cc_p(F)]=\e_r[\cc_{p-1}(D^{(i)}_t F)]$,
  where $\e_r$ stands for the conditional expectation with respect to $\m F_r.$
  \end{itemize}
\end{lemme}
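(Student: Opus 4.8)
The plan is to prove the three items separately, each by reducing to the basic structural facts about the chaos decomposition established earlier in the excerpt. For the first item, I would use the orthogonality of the chaos expansion directly. Writing $F = \e(F) + \sum_{k \ge 1} P_k(F)$ as in Proposition \ref{CRP-for-iterated}, the truncation $\cc_p(F) = \e(F) + \sum_{k=1}^p P_k(F)$ from \eqref{eq5} retains only the first $p$ terms. By Pythagoras, using that the $P_k(F)$ are mutually orthogonal and orthogonal to the constant $\e(F)$,
\begin{align*}
  \e(|\cc_p(F)|^2) = \e(F)^2 + \sum_{k=1}^p \e[(P_k(F))^2] \le \e(F)^2 + \sum_{k \ge 1} \e[(P_k(F))^2] = \e(|F|^2),
\end{align*}
where the identity $\e[(P_k(F))^2] = \sum_{\mathbf{i}_k} \kappa^{|\mathbf{i}_k|}\|f_{\mathbf{i}_k}\|^2_{\Sigma_k}$ from \eqref{L-two-of-P-n} makes each summand nonnegative, so dropping the tail only decreases the value.

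For the second item, the key is that $\cc_p$ is a bounded linear operator on $\lp^2(\m F_T)$ (it is the orthogonal projection onto the first $p$ chaos). I would verify that for $H \in \hpp^2_T(\rset)$ the map $t \mapsto \cc_p(H_t)$ is well-defined in $\lp^2$ and that the Bochner integral $\int_0^T H_s\, ds$ lies in $\lp^2(\m F_T)$, which follows from $\e\int_0^T |H_s|^2 ds < \infty$ together with the Cauchy--Schwarz inequality on $[0,T]$. Then the identity $\cc_p(\int_0^T H_s\, ds) = \int_0^T \cc_p(H_s)\, ds$ is simply the commutation of a bounded linear operator with a Bochner integral; Remark \ref{rem0} identifies the kernels of $\cc_p(H_s)$ explicitly and one checks that integrating these kernels in $s$ reproduces the kernels of $\cc_p(\int_0^T H_s\, ds)$, by Fubini applied to the coefficients $d_{\mathbf{k}_l,\mathbf{j}_{k-l}}(s)$.

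For the third item I would argue on the chaos kernels and use the derivative formula for iterated integrals. Write $F = \e(F) + \sum_{k \ge 1}\sum_{\mathbf{i}_k} L_k^{\mathbf{i}_k}(f_{\mathbf{i}_k})$. Applying $D^{(i)}_t$ termwise via the formula $D^{(j)}_t L_k^{\mathbf{i}_k}(f) = \sum_{l=1}^k \ind_{\{i_l=j\}} L_{k-1}^{i_1,\ldots,\widehat{i_l},\ldots,i_k}(f(\cdots,t,\cdots))$ shows that $D^{(i)}_t F$ is a sum of iterated integrals of order $k-1$, so the chaos of order $k$ in $F$ contributes to the chaos of order $k-1$ in $D^{(i)}_t F$. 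Since $\cc_p$ keeps chaos up to order $p$ while $\cc_{p-1}$ keeps up to $p-1$, the shift by one in the order under differentiation is exactly what makes $D^{(i)}_t \cc_p(F) = \cc_{p-1}(D^{(i)}_t F)$ hold; the conditional expectation $\e_r[\cdot]$ then commutes with both sides because it acts as a projection on chaos that preserves the order of each integral and both operations act on disjoint structural features (order truncation versus temporal projection). The main obstacle here is justifying the termwise differentiation and the interchange of $D^{(i)}_t$, $\e_r$ and the infinite sum: I would control this using the $\D^{1,2}$ hypothesis, which guarantees $\sum_k k \sum_{\mathbf{i}_k}\kappa^{|\mathbf{i}_k|}\|f_{\mathbf{i}_k}\|^2_{\Sigma_k} < \infty$ and hence $L^2(\Omega \times [0,T])$-convergence of the differentiated series, after which the identity on each chaos level is a direct bookkeeping of indices.
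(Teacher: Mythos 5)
Your proposal is correct, and it supplies exactly the standard arguments the paper implicitly relies on: the paper states Lemma \ref{lem4} without proof, treating the three items as immediate consequences of the orthogonality identity \eqref{L-two-of-P-n} (so that $\cc_p$ is the orthogonal projection onto the chaoses of order at most $p$, giving the first two items by Pythagoras and by commutation of a bounded linear operator with the Bochner integral), of the termwise derivative formula for iterated integrals (which shifts chaos order $k$ to $k-1$, giving $D^{(i)}_t\cc_p(F)=\cc_{p-1}(D^{(i)}_tF)$), and of the commutation $D^{(i)}_t\e_r=\e_r D^{(i)}_t$ for $t\le r$, with the $\D^{1,2}$ hypothesis justifying the interchange with the infinite sum. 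No gaps; your write-up matches the intended (omitted) proof.
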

\subsection{Truncation of the basis}
Instead of summing over all $\mathbf{k}_l \in \mathbb{N}^l$ and
$\mathbf{j}_{ k-l} \in \mathbb{N}^{k-l}$, we only consider the $N$ first functions $\{e_1,\ldots,e_N\}$ of the basis $\{e_i\}_i$ defined in \eqref{basis-fixed}.
 This gives (together with the orthogonal projection onto the chaos up to order $p$) the following approximation of $F$
  \begin{align}\label{eq51}
     \cc^N_p(F) &= \e [F] \nonumber\\& \,\,\,+ \sum_{k=1}^p \sum_{l=0}^k  \sum_{{\bf k}_l \in
       \{1,\cdots,N\}^l} \sum_{{\bf j}_{k-l} \in \{1,\cdots,N\}^{k-l}}
                                       d_{{\bf k}_l,   {\bf j}_{k-l}}
                                       L^{0,...,0}_l(\tilde e[k_1,\ldots,k_l])
                                       L^{1,...,1}_{k-l}(\tilde                                        e[j_1,\ldots,j_{k-l}])\notag\\
    &:=\e [F]+\sum_{k=1}^p P^N_k(F).
  \end{align}
 Let us now rewrite $\cc^N_p(F)$ $(p \le N)$  in terms of Hermite and
   Charlier polynomials. From \eqref{multiple=product-of-iterated}, \eqref{isometry-formula}, \eqref{iterated-Hermite} and \eqref{iterated-Charlier} we derive using the notation of Lemma \ref{c_kj-norm} that
    \begin{align*}
  &\langle g_k, e[(k_1,0),...,(k_l,0)]\otimes e[(j_1,1),...,(j_{k-l},1)]\rangle_{(L^2)^{\otimes k}}
  \\&= \frac{{\bf n}^B!}{|{\bf n}|!  (\kappa h)^{|{\bf n}^P|/2}} \e\left(F\prod_{i=1}^N
  K_{n^B_i}\left(G_i\right)C_{{\bf n}^P_i}(Q_i,\kappa h)\right),
  \end{align*}  
   where we used $G_i:=\frac{\Delta B_i}{\sqrt{h}}$ and $Q_i:=\Delta N_i.$ 
  From  Lemma \ref{c_kj-norm} we get then 
\begin{align}\label{eq:chaos_dec}
  \cc_p^N(F)=d_0+\sum_{k=1}^p \sum_{|{\bf n}|=k} d^{\bf n}_k \prod_{i=1}^N
  K_{{\bf n}^B_i}\left(G_i\right)C_{{\bf n}^P_i}(Q_i,\kappa h),
\end{align}
where $d_0=\e(F)$ and
\begin{align}\label{eq:coef_chaos_dec}
  d^n_k:=\frac{{\bf n}^B!}{{\bf n}^P! (\kappa h)^{|{\bf n}^P|}}\e\left(F\prod_{i=1}^N
  K_{n^B_i}\left(G_i\right)C_{n^P_i}(Q_i,\kappa h)\right).
\end{align}

\begin{prop}\label{prop2} Let $F$ be a real random  variable in $\lp^2(\m F_T)$ and let $r$ be an integer in $\{1,\cdots, N\}$.
 For all $\ov{t}_{r-1}<t\leq \ov{t}_r$, we have
\begin{align*}
  &\e_t\left(\cc_p^N (F) \right)  = d_0 + \\
  &\sum_{k=1}^p \sum_{|{\bf n}(r)|=k} d_k^{\bf n}
   \left(\frac{t-\ov{t}_{r-1}}{h}\right)^{\frac{n^B_r}{2}}
   K_{n^B_r}\left(\frac{B_t-B_{\ov{t}_{r-1}}}{\sqrt{t-\ov{t}_{r-1}}}\right)C_{n^P_r}(N_t-N_{\ov{t}_{r-1}},\kappa(t-\ov{t}_{r-1})) \\& \quad\quad\quad  \times \underbrace{\left(\prod_{i<r}
       K_{n^B_i}(G_i)C_{n^P_i}(Q_i,\kappa h)\right)}_{:=A_r}, 
 \end{align*}
 \begin{align*}
 &D^{(0)}_t \e_t\left(\cc_p^N(F)\right)\\
&= h^{-1/2} \sum_{k=1}^p \sum_{\substack{|{\bf n}(r)|=k \\ n^B_r>0}} d_k^{\bf n}
   \left(\frac{t-\ov{t}_{r-1}}{h}\right)^{\frac{n^B_r-1}{2}}
   K_{n^B_r-1}\left(\frac{B_t-B_{\ov{t}_{r-1}}}{\sqrt{t-\ov{t}_{r-1}}}\right)C_{n^P_r}(N_t-N_{\ov{t}_{r-1}},\kappa(t-\ov{t}_{r-1}))A_r,
 \end{align*}
 \begin{align*}
   &D^{(1)}_t \e_t\left(\cc_p^N(F)\right)\\
   &= \sum_{k=1}^p \sum_{\substack{|{\bf n}(r)|=k \\ n^P_r>0}} d_k^{\bf n}
   \left(\frac{t-\ov{t}_{r-1}}{h}\right)^{\frac{n^B_r}{2}}
   K_{n^B_r}\left(\frac{B_t-B_{\ov{t}_{r-1}}}{\sqrt{t-\ov{t}_{r-1}}}\right)n^P_r C_{n^P_r-1}(N_t-N_{\ov{t}_{r-1}},\kappa(t-\ov{t}_{r-1}))A_r,
\end{align*}
where for $r\leq N$  ${\bf n}(r)=({\bf n}^B(r),{\bf n}^P(r))$, and ${\bf n}^A(r)$ stands for
$(n^A_1,\ldots, n^A_r)$, where $A=B$ or $P$ and ${\bf n}_r=({\bf n}_r^B,{\bf n}_r^P)$.
\end{prop}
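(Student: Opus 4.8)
The plan is to exploit the product structure of $\cc_p^N(F)$ in \eqref{eq:chaos_dec} together with the independence of the increments over the disjoint intervals $]\ov{t}_{i-1},\ov{t}_i]$. First I would fix $r$ with $\ov{t}_{r-1}<t\le\ov{t}_r$ and split each summand $\prod_{i=1}^N K_{n^B_i}(G_i)C_{n^P_i}(Q_i,\kappa h)$ into three blocks: the factors with $i<r$, which depend only on increments completed before $\ov{t}_{r-1}$ and hence are $\m F_t$-measurable (this block is exactly $A_r$); the factor with $i=r$, whose increments $\Delta B_r,\Delta N_r$ decompose into the $\m F_t$-measurable parts $B_t-B_{\ov{t}_{r-1}},\,N_t-N_{\ov{t}_{r-1}}$ and the independent future parts $B_{\ov{t}_r}-B_t,\,N_{\ov{t}_r}-N_t$; and the factors with $i>r$, which involve only increments after $t$ and are therefore independent of $\m F_t$. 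Since the $d^n_k$ are deterministic and $B\perp N$, taking $\e_t$ factorizes over these three blocks and over the Brownian and Poisson parts.

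For the future block I would use that each normalized Hermite and Charlier polynomial of order $\ge1$ is centered: $\e[K_m(G_i)]=0$ and $\e[C_m(Q_i,\kappa h)]=0$ for $m\ge1$, while $K_0=C_0=1$. Hence $\prod_{i>r}\e[K_{n^B_i}(G_i)]\,\e[C_{n^P_i}(Q_i,\kappa h)]$ equals $1$ if $n^B_i=n^P_i=0$ for all $i>r$ and vanishes otherwise; this is precisely the restriction $|n(r)|=k$ in the statement. For the present block the key identities are the martingale-type relations
\begin{align*}
\e_t\!\left[K_{n^B_r}\!\left(\tfrac{\Delta B_r}{\sqrt{h}}\right)\right]
 &= \left(\tfrac{t-\ov{t}_{r-1}}{h}\right)^{\frac{n^B_r}{2}} K_{n^B_r}\!\left(\tfrac{B_t-B_{\ov{t}_{r-1}}}{\sqrt{t-\ov{t}_{r-1}}}\right),\\
\e_t\!\left[C_{n^P_r}(\Delta N_r,\kappa h)\right]
 &= C_{n^P_r}\!\left(N_t-N_{\ov{t}_{r-1}},\kappa(t-\ov{t}_{r-1})\right),
\end{align*}
which I would establish from the generating functions $e^{x\lambda-\lambda^2/2}=\sum_m K_m(x)\lambda^m$ and $e^{-a\lambda}(1+\lambda)^x=\sum_m C_m(x,a)\tfrac{\lambda^m}{m!}$: integrating out the independent future increment reweights $-\tfrac{\lambda^2}{2}$ into $-\tfrac{\lambda^2}{2}\tfrac{t-\ov{t}_{r-1}}{h}$ (resp.\ $e^{-\kappa h\lambda}$ into $e^{-\kappa(t-\ov{t}_{r-1})\lambda}$), and matching powers of $\lambda$ yields the stated rescaling. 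Substituting these three evaluations and collecting $A_r$ produces the first formula.

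For the Malliavin derivatives I would differentiate the explicit expression just obtained. Since $D^{(0)}_u$ acts on a functional of $B_t-B_{\ov{t}_{r-1}}=\int_0^T\ind_{]\ov{t}_{r-1},t]}\,dB$ through multiplication by $\ind_{]\ov{t}_{r-1},t]}(u)$, evaluating at $u=t$ annihilates $A_r$ and the Charlier factor (their increments lie in $]\ov{t}_{i-1},\ov{t}_i]$ with $\ov{t}_i\le\ov{t}_{r-1}<t$) and picks out only the $i=r$ Hermite factor; then $K'_m=K_{m-1}$ together with the chain-rule factor $1/\sqrt{t-\ov{t}_{r-1}}$ produces the prefactor $h^{-1/2}\big(\tfrac{t-\ov{t}_{r-1}}{h}\big)^{(n^B_r-1)/2}$ and forces $n^B_r>0$. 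For $D^{(1)}_t$ I would use that the add-one-jump operator at $t\in\,]\ov{t}_{r-1},\ov{t}_r]$ leaves $A_r$ and the Hermite factor unchanged and replaces $N_t-N_{\ov{t}_{r-1}}$ by $N_t-N_{\ov{t}_{r-1}}+1$; the forward-difference identity $C_m(x+1,a)-C_m(x,a)=m\,C_{m-1}(x,a)$, read off from the Charlier generating function, then gives the factor $n^P_r\,C_{n^P_r-1}$ and the restriction $n^P_r>0$.

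The main obstacle I expect lies in the two conditional-expectation identities of the present block, namely the generating-function computation that transfers the variance (resp.\ the Poisson parameter) from $h$ to $t-\ov{t}_{r-1}$ while rescaling the argument; and, for $D^{(0)}_t$, the need to justify evaluating the Malliavin derivative at the right endpoint $t$ of the interval $]\ov{t}_{r-1},t]$ in accordance with the predictable-projection convention fixed earlier. Once these are in place, the remainder is routine bookkeeping of the $h$- and $(t-\ov{t}_{r-1})$-powers.
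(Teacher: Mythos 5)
Your proposal is correct, and it reaches the stated formulas by a genuinely more self-contained route than the paper. The paper's own proof is essentially a citation: for the Brownian part it invokes \cite[Proposition 2.7]{BL_14}, and for the Poisson part it identifies the Charlier factor with the multiple integral $I_n(\ind^{\otimes n}_{]\ov{t}_{r-1},\ov{t}_r]})$ and uses the martingale property in the right endpoint, $\e_t[I_n(\ind^{\otimes n}_{]\ov{t}_{r-1},\ov{t}_r]})]=I_n(\ind^{\otimes n}_{]\ov{t}_{r-1},t]})=C_n(N_t-N_{\ov{t}_{r-1}},\kappa(t-\ov{t}_{r-1}))$, together with $D^{(1)}_t I_n(\ind^{\otimes n}_{]\ov{t}_{r-1},t]})=n\,I_{n-1}(\ind^{\otimes (n-1)}_{]\ov{t}_{r-1},t]})$ from Privault. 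You instead derive the two ``present block'' identities directly from the generating functions $e^{x\lambda-\lambda^2/2}$ and $e^{-a\lambda}(1+\lambda)^x$ and the independence of the future increments, and then obtain the derivatives by the chain rule ($K_m'=K_{m-1}$) and the forward difference $C_m(x+1,a)-C_m(x,a)=mC_{m-1}(x,a)$; both computations check out, and your past/present/future splitting with the centering of higher-order Hermite and Charlier polynomials is exactly what justifies the restriction to $|n(r)|=k$, a point the paper leaves implicit. What your approach buys is transparency and independence from external references; what the paper's buys is brevity and the automatic consistency of the Malliavin derivatives with the chaos-based definition. The only point you should make explicit is that the add-one-jump reading of $D^{(1)}_t$ agrees with the paper's chaos-based definition (this is the content of Remark \ref{rem70}, via \cite[Proposition 5.1]{GL_11}) and that the product rule for the difference operator, $D^{(1)}_t(FG)=FD^{(1)}_tG+GD^{(1)}_tF+D^{(1)}_tF\,D^{(1)}_tG$, collapses to the single surviving term here because $D^{(1)}_t$ annihilates $A_r$ and the Hermite factor.
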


\begin{proof}
  The first result comes from \cite[Proposition 2.7]{BL_14} for the Brownian
  part and from the fact that $\e_t(C_n(Q_r,\kappa
  h))=\e_t[I_n(\ind^{\otimes
    n}_{]\ov{t}_{r-1},\ov{t}_r]})]=C_n(N_t-N_{\ov{t}_{r-1}},\kappa(t-\ov{t}_{r-1}))$ (see
  \cite[Proposition 6.2.9]{privault}). The second
  result comes from \cite[Proposition 2.7]{BL_14}. To get the last one, we
  write $D_t^{(1)} C_{n^P_r}(N_t-N_{\ov{t}_{r-1}},t-\ov{t}_{r-1})=D_t^{(1)}
  I_{n^P_r}(\ind^{\otimes n^P_r}_{]\ov{t}_{r-1},t]})=n^P_r I_{n^P_r-1}(\ind^{\otimes
    n^P_r-1}_{[\ov{t}_{r-1},t]})$ (see \cite[Definition 6.4.1]{privault}).
  
\end{proof}

\begin{rem}\label{rem1}
  For $t={t}_r$ and $r\ge 1$, Proposition \ref{prop2} leads to
  \begin{align*}
	\e_{{t}_r}\left(\cc_p^N (F)\right)  & = d_0 + \sum_{k=1}^p \sum_{|{\bf n}(r)|=k} d_k^{\bf n} \prod_{i\leq r} K_{n^B_i}\left(G_i\right)C_{n^P_i}(Q_i,\kappa h), \\
	D^{(0)}_{{t}_r}\e_{{t}_r}\left(\cc_p^N (F)\right) & = h^{-1/2} \sum_{k=1}^p
    \sum_{\substack{|{\bf n}(r)|=k \\ n^B_r>0}} d_k^{\bf n}
    K_{n^B_r-1}\left(G_r\right)C_{n^P_r}(Q_r,\kappa h) \left(\prod_{i<r}
      K_{n^B_i}\left(G_i\right)C_{n^P_i}(Q_i,\kappa h)\right),\\
    D^{(1)}_{{t}_r}\e_{{t}_r}\left(\cc_p^N (F)\right) & =\sum_{k=1}^p
    \sum_{\substack{|{\bf n}(r)|=k \\ n^P_r>0}} d_k^{\bf n} K_{n^B_r}\left(G_r\right)n^P_r C_{n^P_r-1}(Q_r,\kappa h) \left(\prod_{i<r} K_{n^B_i}\left(G_i\right)C_{n^P_i}(Q_i,\kappa h)\right).
  \end{align*} When $r=0$, we get $\e_{{t}_0}\left(\cc_p^N (F)\right) = d_0 $ and
  we define $D^{(0)}_{\ov{t}_0}\e_{\ov{t}_0}\left(\cc_p^N
   (F)\right)=\frac{1}{\sqrt{h}}d^{\mathbf{e}_1,\mathbf{0_N}}_1$ (which is the limit of
  $D^{(0)}_{t}\e_{t}\left(\cc_p^N (F)\right)$ when $t$ tends to $0$) and  $D^{(1)}_{\ov{t}_0}\e_{\ov{t}_0}\left(\cc_p^N
    (F)\right)=d^{\mathbf{0_N},\mathbf{e}_1}_1$, where $\mathbf{e}_1:=(1,0,\cdots,0)$ of size $N$ and
   $\mathbf{0_N}$ is the vector null of size $N$.
\end{rem}

The following Lemma, similar to Lemma \ref{lem4}, gives
some useful properties of the operator $\cc^N_p.$
\begin{lemme}\label{lem9}
  Let $F$ be a r.v. in $\lp^2(\m F_T)$ and $H$ be in $\hpp^2_T(\rset)$. Then
  \begin{itemize}
  \item $\forall (p,N) \in (\nset^{\star})^2,\; \e(|\cc^N_p(F)|^2) \le
    \e(|\cc_p(F)|^2)\le \e(|F|^2)$.
  \item Let $H$ be in $H^2_T(\rset)$. We deduce from \eqref{eq51} that $\cc^N_p \left(\int_0^T H_s ds\right)=\int_0^T \cc^N_p (H_s) ds$.
  \item For all $F \in \mathbb{D}^{1,2}$, for all $i\in \{0,1\}$ and for all $t \le r$, $D^{(i)}_t \e_r[\cc^N_p(F)]=\e_r[\cc^N_{p-1}(D^{(i)}_t F)]$.
  \end{itemize}
\end{lemme}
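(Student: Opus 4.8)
The three assertions parallel those of Lemma \ref{lem4}, the only new feature being the truncation of the basis to the indices $\{1,\ldots,N\}$, so I would treat them in turn. For the first bullet, the inequality $\e(|\cc_p(F)|^2)\le \e(|F|^2)$ is exactly the first bullet of Lemma \ref{lem4}, so only $\e(|\cc^N_p(F)|^2)\le \e(|\cc_p(F)|^2)$ is new. By \eqref{multiple=product-of-iterated} and the isometry \eqref{isometry-formula}, the products $L^{0,...,0}_l(\tilde e[k_1,\ldots,k_l])\,L^{1,...,1}_{k-l}(\tilde e[j_1,\ldots,j_{k-l}])$ appearing in the decomposition of Remark \ref{rem0} are mutually orthogonal in $\lp^2$. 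Comparing Remark \ref{rem0} with \eqref{eq51}, $\cc^N_p(F)$ is obtained from $\cc_p(F)$ by discarding exactly those terms for which at least one index $k_i$ or $j_i$ exceeds $N$. Hence $\cc_p(F)-\cc^N_p(F)$ is orthogonal to $\cc^N_p(F)$, and Pythagoras gives $\e(|\cc_p(F)|^2)=\e(|\cc^N_p(F)|^2)+\e(|\cc_p(F)-\cc^N_p(F)|^2)\ge \e(|\cc^N_p(F)|^2)$.

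For the second bullet, by \eqref{eq:chaos_dec}--\eqref{eq:coef_chaos_dec} we have $\cc^N_p(F)=\e(F)+\sum_{k=1}^p\sum_{|n|=k} d^n_k(F)\prod_{i=1}^N K_{n^B_i}(G_i)C_{n^P_i}(Q_i,\kappa h)$, where each coefficient $d^n_k(F)$ is a constant multiple of the expectation $\e(F\prod_{i}K_{n^B_i}(G_i)C_{n^P_i}(Q_i,\kappa h))$ and is therefore linear in $F$. For $F=\int_0^T H_s\,ds$ with $H\in\hpp^2_T(\rset)$, Fubini's theorem---justified by $H\in\hpp^2_T(\rset)$ together with the fact that the products of Hermite and Charlier polynomials have finite moments of every order, via Cauchy--Schwarz---gives $d^n_k(\int_0^T H_s\,ds)=\int_0^T d^n_k(H_s)\,ds$ and $\e(\int_0^T H_s\,ds)=\int_0^T\e(H_s)\,ds$. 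Since the polynomial factors do not depend on $s$, pulling the time integral out of the finite sum yields $\cc^N_p(\int_0^T H_s\,ds)=\int_0^T \cc^N_p(H_s)\,ds$.

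For the third bullet I would split the identity into two commutations. For $t\le r$ and $G\in\mathbb{D}^{1,2}$ the Malliavin derivative commutes with the conditional expectation, $D^{(i)}_t\e_r[G]=\e_r[D^{(i)}_t G]$ (verified directly on the chaos expansion from the annihilation formula for $D^{(j)}$ and the action of $\e_r$ on kernels, using $t\le r$); applied to $G=\cc^N_p(F)$, which belongs to $\mathbb{D}^{1,2}$ as a finite linear combination of products of iterated integrals of bounded order, this reduces the claim to $D^{(i)}_t\cc^N_p(F)=\cc^N_{p-1}(D^{(i)}_t F)$. For the latter I would act term by term on the decomposition of Remark \ref{rem0}. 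Using \eqref{multiple=product-of-iterated} to write each product $L^{0,...,0}_l(\tilde e[k_1,\ldots,k_l])\,L^{1,...,1}_{k-l}(\tilde e[j_1,\ldots,j_{k-l}])$ as a mixed multiple integral $I_k$ and then invoking the annihilation formula defining $D^{(j)}_t$, the operator $D^{(0)}_t$ (resp.\ $D^{(1)}_t$) sums only over the coordinates of Brownian (resp.\ Poisson) type, so it lowers the order from $k$ to $k-1$ and evaluates one basis function $e_{k_m}$ (resp.\ $e_{j_m}$) at $t$. Since every surviving index stays in $\{1,\ldots,N\}$ and no new index appears, the image remains in the truncated span, and comparing the order-$(k-1)$ kernels with \eqref{kernel-Malliavin-rel} identifies the result with $\cc^N_{p-1}(D^{(i)}_t F)$.

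The routine parts are the first two bullets, resting on orthogonality/Pythagoras and on Fubini. The main obstacle is the identity $D^{(i)}_t\cc^N_p(F)=\cc^N_{p-1}(D^{(i)}_t F)$: one must check that taking the Malliavin derivative does not move any index out of $\{1,\ldots,N\}$ and that the order-$(k-1)$ kernels it produces match, after conditioning, exactly those of the truncated chaos $\cc^N_{p-1}(D^{(i)}_t F)$. This is precisely where the product structure of the basis $\tilde e[(k_1,i_1),\ldots,(k_m,i_m)]$ and the relation \eqref{multiple=product-of-iterated} between products of iterated integrals and mixed multiple integrals are essential.
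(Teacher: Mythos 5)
The paper offers no proof of Lemma \ref{lem9} at all: it is stated as the $\cc^N_p$-analogue of Lemma \ref{lem4} and left to the reader, so your argument can only be measured against the statement itself. Your treatment of the first two bullets is correct. The products $L^{0,\ldots,0}_l(\tilde e[k_1,\ldots,k_l])\,L^{1,\ldots,1}_{k-l}(\tilde e[j_1,\ldots,j_{k-l}])$ are mutually orthogonal by \eqref{multiple=product-of-iterated} and \eqref{isometry-formula}, so $\cc^N_p(F)$ is an orthogonal sub-sum of the expansion in Remark \ref{rem0} and Pythagoras gives the first bullet; the second bullet is linearity of the coefficients in $F$ plus Fubini (equivalently, $\cc^N_p$ is a bounded operator on $\lp^2(\m F_T)$ and therefore commutes with the Bochner integral $\int_0^T H_s\,ds$).

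The third bullet is where a genuine gap lies, precisely at the step you yourself flag as the main obstacle: ``comparing the order-$(k-1)$ kernels \ldots identifies the result with $\cc^N_{p-1}(D^{(i)}_t F)$.'' Verifying that $D^{(i)}_t\cc^N_p(F)$ stays inside the truncated span (true) is not enough; one must also check that $\cc^N_{p-1}(D^{(i)}_tF)$ contains no terms beyond those, and it does. Writing $\cc^N_p(F)=\e[F]+\sum_{k\le p}I_k(\Pi_N^{\otimes k}g_k)$, where $\Pi_N$ denotes the orthogonal projection of $L^2(\lambda\otimes(\delta_0+\kappa\delta_1))$ onto $\mathrm{span}\{e_i\otimes p_j:\ i\le N,\ j\in\{0,1\}\}$, the order-$(k-1)$ kernel of $D^{(i)}_t\cc^N_p(F)$ is $k\,(\Pi_N^{\otimes k}g_k)((t,i),\cdot)$ --- the derivative variable has itself been projected --- whereas the corresponding kernel of $\cc^N_{p-1}(D^{(i)}_tF)$ is $k\,\Pi_N^{\otimes(k-1)}\bigl[g_k((t,i),\cdot)\bigr]$ --- it has not. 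The latter retains contributions from components of $g_k$ in which the variable hit by $D^{(i)}_t$ has basis index larger than $N$ (after derivation these become order-$(k-1)$ kernels with all indices $\le N$), while the former discarded them at level $k$. Concretely, take $F=L_1^{0}(e_{N+1})=\int_0^T e_{N+1}(s)\,dB_s\in\D^{1,2}$ with $e_{N+1}\perp\mathrm{span}(e_1,\ldots,e_N)$ (the first Haar function of the completion, say): then every coefficient in \eqref{eq51} vanishes, so $D^{(0)}_t\e_r[\cc^N_p(F)]=0$, while $D^{(0)}_tF=e_{N+1}(t)$ is deterministic and $\e_r[\cc^N_{p-1}(D^{(0)}_tF)]=e_{N+1}(t)\neq 0$ for a.e.\ $t$. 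So the commutation $D^{(i)}_t\cc^N_p(F)=\cc^N_{p-1}(D^{(i)}_tF)$ --- and with it the third bullet read literally for arbitrary $F\in\D^{1,2}$ --- fails; the gap is not repairable without either restricting $F$ (for instance to random variables measurable w.r.t.\ the increments $(\Delta B_i,\Delta N_i)_{i\le N}$, whose kernels are piecewise constant on the grid and hence fixed by $\Pi_N$) or additionally projecting the $t$-dependence onto $\mathrm{span}(e_1,\ldots,e_N)$. Your first reduction --- $D^{(i)}_t\e_r[G]=\e_r[D^{(i)}_tG]$ for $t\le r$, applied to $G=\cc^N_p(F)\in\D^{1,2}$ --- is valid; the defect is entirely in the second half, and it reflects a defect of the stated lemma itself rather than a fixable flaw in your strategy.
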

Let us end this subsection by some examples.

\begin{ex}[Case $p=2$]
From \eqref{eq:chaos_dec}-\eqref{eq:coef_chaos_dec}, we have
\begin{align*}
  \cc^N_2(F)=&d_0+\sum_{j=1}^N \left(d_1^{j,B}
  K_1(G_j) + d_1^{j,P} C_1(Q_j,\kappa
  h)\right)+\sum_{j=1}^N \left(d_2^{j,B} K_2(G_j)+d_2^{j,P} C_2(Q_j,\kappa
  h)\right)\\
&+
\sum_{j=1}^N\sum_{i=1}^{j-1}\left(d_2^{i,j,B}K_1(G_i)K_1(G_j)+d_2^{i,j,P}C_1(Q_i,\kappa  h)C_1(Q_j,\kappa h)\right) \\ &+\sum_{i=1}^N \sum_{j=1}^N d_2^{i,j,B,P} K_1(G_i)C_1(Q_j,\kappa  h),
\end{align*}
where 
\begin{align*}
  &d_1^{j,B}=\e(FK_1(G_j)),\;d_1^{j,P}=\frac{1}{\kappa
    h}\e(FC_1(Q_j, \kappa h)),\\
  &d_2^{j,B}=2\e(FK_2(G_j)),\;d_2^{j,P}=\frac{1}{2(\kappa
    h)^2}\e(FC_2(Q_j, \kappa h)),\\
  & d_2^{i,j,B}=\e(FK_1(G_i)K_1(G_j)),\;d_2^{i,j,P}=\frac{1}{(\kappa
    h)^2}\e(FC_1(Q_i, \kappa h)C_1(Q_j, \kappa h)),\\
  &d_2^{i,j,B,P}=\frac{1}{\kappa h}\e(FK_1(G_i)C_1(Q_j, \kappa h)).
\end{align*}
Remark \ref{rem1} leads to
\begin{align*}
  \e_{t_r}(\cc^N_2(F))=&d_0+\sum_{j=1}^r \left(d_1^{j,B}
  K_1(G_j) + d_1^{j,P} C_1(Q_j,\kappa
  h)\right)+\sum_{j=1}^r \left(d_2^{j,B} K_2(G_j)+d_2^{j,P} C_2(Q_j,\kappa
  h)\right)\\
&+ \sum_{j=1}^r\sum_{i=1}^{j-1}\left(d_2^{i,j,B}K_1(G_i)K_1(G_j)+d_2^{i,j,P}C_1(\Delta
    N_i,\kappa  h)C_1(Q_j,\kappa h)\right) \\&+\sum_{i=1}^r \sum_{j=1}^r d_2^{i,j,B,P} K_1(G_i)C_1(Q_j,\kappa  h).
\end{align*}

\end{ex}

\section{Numerical scheme}\label{sect:numerical-scheme}

\subsection{Picard's approximation}
 Picard's iterations: $(Y^0,Z^0,U^0)=(0,0,0)$ and for $q\in\nset$,
\begin{equation*} Y^{q+1}_t = \xi + \int_t^T f\left(s,Y^q_s,Z^q_s,U^q_s\right) ds -
  \int_t^T Z^{q+1}_s dB_s-\int_{]t,T]} U^{q+1}_s d\tilde{N}_s,\quad 0\leq t\leq T.
\end{equation*}
It is well-known that the sequence $(Y^q,Z^q,U^q)$ converges exponentially fast towards the solution $(Y,Z,U)$ to BSDE~\eqref{eq:main}.
We write this Picard scheme in a forward way. Let $F^q$ denote $F^q:= \xi +
\int_0^T f\left(s,Y^q_s,Z^q_s,U^q_s\right) ds$. We define
\begin{align}
  &Y^{q+1}_t  = \e\left(F^q \:\Big|\: \m F_t\right) - \int_0^t f\left(s,Y^q_s,Z^q_s,U^q_s\right) ds,\label{eq:Y_q} \\
  &Z^{q+1}_t   =   \e\left(D^{(0)}_t F^q  \:\Big|\:\m F_{t^-}\right),\quad
  U^{q+1}_t =  \e\left(D^{(1)}_t F^q \:\Big|\: \m F_{t^-}\right). \label{eq:ZU_q}
\end{align}

\subsection{Chaos approximation}
Let $(Y^{q,p},Z^{q,p},U^{q,p})$ denote the approximation of $(Y^q,Z^q,U^q)$ built at step
$q$ using a chaos decomposition up to order $p$: $(Y^{0,p},Z^{0,p},U^{0,p})=(0,0,0)$ and 
\begin{align}\label{eq:Y_qp}
  &Y^{q+1,p}_t=\e\left[\cc_p\left(F^{q,p}\right)\:\Big|\: \m F_{t}\right]-\int_0^t
  f\left(s,Y^{q,p}_s,Z^{q,p}_s,U^{q,p}_s\right)ds,\\
  &Z^{q+1,p}_t= \e\left[D_t^{(0)} \cc_p\left(F^{q,p}\right) \:\Big|\:\m F_{t^-}\right], \;\;U^{q+1,p}_t= \e \left[D_t^{(1)} \cc_p\left(F^{q,p}\right) \:\Big|\:\m F_{t^-}\right]\label{eq:ZU_qp},
\end{align}
where $F^{q,p}=\xi+\int_0^T
f\left(s,Y^{q,p}_s,Z^{q,p}_s,U^{q,p}_s \right)ds$.

\subsubsection{Truncation of the basis} The third type of approximation comes
from the truncation of the orthonormal $\lp^2(0,T)$ basis $\{e_i\}_{i \ge 1}$ defined in \eqref{basis-fixed}.  Instead of
considering the whole basis we only keep the first $N$ functions
$\{e_1,\cdots,e_N\}$ to build the chaos decomposition projections $\cc_p^N$. Proposition
\ref{prop2} gives us explicit formulas for $\e_t (\cc_p^N
(F))$, $D^{(0)}_t \e_t (\cc_p^N
(F))$ and $D^{(1)}_t \e_t (\cc_p^N
(F))$. From \eqref{eq:Y_qp} and \eqref{eq:ZU_qp}, we build $(Y^{q,p,N},Z^{q,p,N},U^{q,p,N})_q$ in the following
way : $(Y^{0,p,N},Z^{0,p,N},U^{0,p,N})=(0,0,0)$ and 
\begin{align}
  &Y^{q+1,p,N}_t= \e_t(\cc^N_p (F^{q,p,N}))- \int_0^t
  f\left(s,Y^{q,p,N}_s,Z^{q,p,N}_s,U^{q,p,N}_s \right) ds,\label{eq:Y_qpN}\\
  & Z^{q+1,p,N}_t=D^{(0)}_t(\e_t(\cc^N_p
  (F^{q,p,N}))), \quad U^{q+1,p,N}_t=D^{(1)}_t(\e_t(\cc^N_p (F^{q,p,N})))\label{eq:ZU_qpN},
\end{align}
where  $F^{q,p,N}:= \xi + \int_0^T f(s,Y^{q,p,N}_s,Z^{q,p,N}_s,U^{q,p,N}_s) ds$.

It is not necessary here to use predictable projections of $Z^{q+1,p,N}$ and $U^{q+1,p,N}.$ In fact,  $Z^{q+1,p,N}$ and $U^{q+1,p,N}$ are adapted and c\`adl\`ag,  
 and from their explicit representation  given above one concludes that the predictable projections are the  left-continuous modifications: $\e_{t^-} Z^{q+1,p,N}_t = Z^{q+1,p,N}_{t^-}$ and
$\e_{t^-} U^{q+1,p,N}_t = U^{q+1,p,N}_{t^-}.$ Moreover,  the integral in \eqref{eq:Y_qpN} does not change if one uses  left-continuous modifications. 
  
\subsubsection{Monte Carlo approximation}
\label{sec:MC} Let $F$ denote a r.v. of
$\lp^2(\m F_T)$. In practise, when we are not able to compute exactly $d_0$ and/or
the coefficients  $d^{\bf n}_k$ of the chaos decomposition
\eqref{eq:chaos_dec}-\eqref{eq:coef_chaos_dec} of $F$, we use Monte-Carlo
simulations to approximate them. Let $\{F^m\}_{1\le m \le M}$ be a $M$
i.i.d. sample of $F$ and $\{(G_1^m,Q^m_1), \cdots,(G_N^m,Q_N^m)\}_{1\le m \le M}$ be a $M$
i.i.d. sample of $\{(G_1,Q_1),\cdots,(G_N,Q_N)\}$.

We approximate the expectations of
  \eqref{eq:coef_chaos_dec} by empirical
  means 
\begin{align}\label{d_hat}
    \widehat{d_0}:=\frac{1}{M}\sum_{m=1}^M
    F^m,\;\;\widehat{d^{\bf n}_k}:=\frac{{\bf n}^B!}{{\bf n}^P! (\kappa
      h)^{|{\bf n}^P|}M}\sum_{m=1}^M \left(F^m\prod_{i=1}^N
  K_{n^B_i}\left(G^m_i\right)C_{n^P_i}(Q^m_i,\kappa h)\right).
\end{align}

In the following,  we denote
\begin{align}\label{chaos_dec_MC}
  \cc_p^{N,M} (F)=\widehat{d_0} + \sum_{k=1}^p \sum_{|{\bf n}|=k} \widehat{d_k^{\bf n}}
  \prod_{1\leq i\leq N} K_{n^B_i}(G_i)C_{n_i^P}(Q_i,\kappa h).
\end{align}
$\e_t(\cc_p^{N,M} (F))$ and
$D_t(\e_t(\cc_p^{N,M} (F)))$ denote the conditional expectations obtained in
Proposition \ref{prop2} when $(d_0,\{d^{\bf n}_k\}_{1 \le k \le p,|{\bf n}|=k})$ are
replaced by $(\widehat{d_0},\{\widehat{d^{\bf n}_k})_{1 \le k \le p,|{\bf n}|=k})$ :

\begin{align*}
  &\e_t\left(\cc_p^{N,M} (F)\right)  = \widehat{d_0} + \\
  &\sum_{k=1}^p \sum_{|{\bf n}(r)|=k} \widehat{d_k^{\bf n}}
   \left(\frac{t-\ov{t}_{r-1}}{h}\right)^{\frac{n^B_r}{2}}
   K_{n^B_r}\left(\frac{B_t-B_{\ov{t}_{r-1}}}{\sqrt{t-\ov{t}_{r-1}}}\right)C_{n^P_r}(N_t-N_{\ov{t}_{r-1}},\kappa(t-\ov{t}_{r-1}))\\ 
& \quad\quad\quad\quad\quad\quad\quad\quad\quad\quad\quad\quad\quad\quad\quad\quad\quad\quad\quad\quad \times \underbrace{\left(\prod_{i<r}
       K_{n^B_i}(G_i)C_{n^P_i}(Q_i,\kappa h)\right)}_{:=A_r}, 
 \end{align*}
 \begin{align*}
 &D^{(0)}_t \e_t\left(\cc_p^{N,M}(F)\right)=\\
&= h^{-1/2} \sum_{k=1}^p \sum_{\substack{|{\bf n}(r)|=k \\ n^B_r>0}} \widehat{d_k^{\bf n}}
   \left(\frac{t-\ov{t}_{r-1}}{h}\right)^{\frac{n^B_r-1}{2}}
   K_{n^B_r-1}\left(\frac{B_t-B_{\ov{t}_{r-1}}}{\sqrt{t-\ov{t}_{r-1}}}\right)C_{n^P_r}(N_t-N_{\ov{t}_{r-1}},\kappa(t-\ov{t}_{r-1}))A_r,
 \end{align*}
 \begin{align*}
   &D^{(1)}_t \e_t\left(\cc_p^{N,M}(F)\right)=\\
   &= \sum_{k=1}^p \sum_{\substack{|{\bf n}(r)|=k \\ n^P_r>0}} \widehat{d_k^{\bf n}}
   \left(\frac{t-\ov{t}_{r-1}}{h}\right)^{\frac{n^B_r}{2}}
   K_{n^B_r}\left(\frac{B_t-B_{\ov{t}_{r-1}}}{\sqrt{t-\ov{t}_{r-1}}}\right)n^P_r C_{n^P_r-1}(N_t-N_{\ov{t}_{r-1}},\kappa(t-\ov{t}_{r-1}))A_r.
\end{align*}

 \begin{rem}\label{rem2} As pointed out in \cite[Remark 3.2]{BL_14}, when $M$
  samples of $\cc_p^{N,M} (F)$ are needed, we can either use the same samples
  as the ones used to compute $\widehat{d_0}$ and $\widehat{d_k^{\bf n}}$ or use new ones. In
  the first case, we only require $M$ samples of $F$ and
   $(G_1,\cdots,G_N,Q_1,\cdots,Q_N)$. The coefficients $\widehat{d_k^{\bf n}}$ and $\widehat{d_0}$
  are not independent of $\prod_{1\leq i\leq N} K_{n^B_i}(G_i)C_{n_i^P}(Q_i,\kappa h)$. In this case, the notation
  $\e_t(\cc_p^{N,M} (F))$ introduced above cannot be linked to
  $\e\left(\cc_p^{N,M} (F)|\cf_t\right)$. In the second case, the coefficients
  $\widehat{d_k^{\bf n}}$ and $\widehat{d_0}$ are independent of $\prod_{1\leq i\leq N} K_{n^B_i}(G_i)C_{n_i^P}(Q_i,\kappa h)$ and we have $\e_t\left(\cc_p^{N,M}
    (F)\right)=\e\left(\cc_p^{N,M} (F)|\cf_t\right)$. This second approach
  requires $2M$ samples of $F$ and $(G_1,\cdots,G_N,Q_1,\cdots,Q_N)$. 
    Convergence results are proved when using the second approach. 
  
\end{rem}

We introduce the processes $(Y^{q+1,p,N,M},Z^{q+1,p,N,M},U^{q+1,p,N,M})$,
useful in the following. It corresponds to the approximation of
$(Y^{q+1,p,N},Z^{q+1,p,N},U^{q+1,p,N})$ when we use $\cc_p^{N,M}$ instead of
$\cc_p^N$, i.e. when we use a Monte Carlo procedure to compute the
coefficients $d^{\bf n}_k$.
\begin{align}\label{eq:Y_qpNM}
   &Y^{q+1,p,N,M}_t= \e_t(\cc^{N,M}_p (F^{q,p,N,M}))- \int_0^t
   f\left({\theta}^{q,p,N,M}_s \right) ds,\\
   &Z^{q+1,p,N,M}_t=D^{(0)}_t(\e_t(\cc^{N,M}_p
  (F^{q,p,N,M}))),\;\;U^{q+1,p,N,M}_t=D^{(1)}_t(\e_t(\cc^{N,M}_p
  (F^{q,p,N,M})))\label{eq:ZU_qpNM},
\end{align}
where  $F^{q,p,N,M}:= \xi + \int_0^T
f({\theta}^{q,p,N,M}_s) ds$ and
${\theta}^{q,p,N,M}_s=\left(s,{Y}^{q,p,N,M}_s,{Z}^{q,p,N,M}_s,U^{q,p,N,M}_s \right)$.

\section{Convergence results}\label{sect:conv-results}

We aim at bounding the error between $(Y,Z)$ ~\textemdash~ the solution of
\eqref{eq:main}~\textemdash~ and $(Y^{q,p,N,M}, $ $ Z^{q,p,N,M})$ defined by
\eqref{eq:Y_qpNM}-\eqref{eq:ZU_qpNM}. Before stating the main result of the paper, we introduce
some hypotheses.

\begin{hypo}[Hypothesis $\mathcal{H}_m$]\label{hypo3}
  Let $m \in \nset^*$. We say that $F$ satisfies Hypothesis $\mathcal{H}_m$ if
  $F$ satisfies the two following hypotheses
  \begin{itemize}
  \item $\mathcal{H}_m^1$ : $\forall j  \in \nset^*$ $F \in \mathcal{D}^{m,j}$, i.e. 
    $\norm{F}^j_{m,j} < \infty.$
  \item   $\mathcal{H}_m^2$ : $\forall  j  \in \nset^*$  $\forall l_0, l_1 \in  \nset $ such that 
   $l=l_0+l_1+1 \le m$  there exist two positive constants $\beta_F$ and $k^F_l(j)$ such that  for all
    multi-indices $\alpha=(\alpha_1,\cdots,\alpha_{l_0})\in
    \{0,1\}^{l_0},$  $\gamma=(\gamma_1,\cdots,\gamma_{l_1+1}) \in \{0, 1 \}^{l_1+1}$ and for a.e. $t_i \in [0,T],$  $s_i \in [0,T]$ it holds
  \begin{align*}
  \operatorname*{ess \,sup}_{t_1, \cdots, t_{l_0}}   \operatorname*{ess \,sup}_{s_{i+1}, \cdots, s_{i+l_1} } 
    \e| D^{\alpha}_{t_1,\cdots,t_{l_0}}(D^{\gamma}_{t_i,s_{i+1},\cdots,s_{i+l_1}}
  F-D^{\gamma}_{s_i,\cdots,s_{i+l_1}} F )|^j \le k^{F}_l(j)  \, |t_i-s_i|^{j \beta_{F}}.
  \end{align*}
  In the following, we denote $K^F_m(j) =\max_{l\le m} k^F_l(j)$.
\end{itemize}

\end{hypo}

\begin{rem}\label{rem12}
  If $F$ satisfies $\mathcal{H}_m$,  for all $l\le m$ and for all multi-indices
  $\alpha=(\alpha_1,\cdots,\alpha_l) \in \{0, 1 \}^l$ we have for a.e. $(t_1,\cdots,t_l) \in [0,T]^l$ and $(s_1,\cdots,s_l) \in [0,T]^l$ that
\begin{align}\label{eq31}
  |\e(D^{\alpha}_{t_1,\cdots,t_l}F)-\e(D^{\alpha}_{s_1,\cdots,s_l}F)|\le
 K^F_m(1) (|t_1-s_1|^{\beta_{F}}+\cdots+|t_l-s_l|^{\beta_{F}}).
\end{align}
\end{rem}

 \begin{hypo}[Hypothesis $\mathcal{H}^3_{p,N}$]\label{hypo4}
  Let $(p,N) \in \nset^2$. We say that a r.v. $F$ satisfies $\mathcal{H}^3_{p,N}$ if
  \begin{align*}
    V_{p,N}(F):=\V(F)+\sum_{k=1}^p \sum_{|{\bf n}|=k}\frac{({\bf n}^B)!}{({\bf n}^P)!(\kappa
      h)^{|{\bf n}^P|}} \V\left(F\prod_{i=1}^N
      K_{n_i^B}(G_i)C_{n^P_i}(Q_i,\kappa h)\right) < \infty,
  \end{align*}
 where $\V(\xi)$ denotes the variance of a r.v. $\xi.$
\end{hypo}

\begin{rem}\label{rem9}
  If $F$ is bounded by $K$, we get $ V_{p,N}(F)\le K^2 \sum_{k=0}^p \binom{2N}{k}
    $. Hence every bounded r.v. satisfies $\mathcal{H}^3_{p,N}$.
\end{rem}
This remark ensues from $\e\left(\prod_{i=1}^N
  K_{n_i^B}^2(G_i)C_{n^P_i}^{2}(Q_i,\kappa h)\right)=\frac{({\bf n}^P)!(\kappa h)^{|{\bf n}^P|}}{({\bf n}^B)!}$.

  \begin{rem}\label{rem6} Let $X$ be the $\rset$-valued  solution of
  \begin{align*}
    X_t=x+\int_0^t b(s,X_s)ds +\int_0^t \sigma(s,X_s)dB_s + \int_0^t \gamma(s,X_{s-})d\tilde N_s , \quad t \in [0,T],
  \end{align*}
  where $b,\sigma, \gamma   :[0,T] \times \rset \rightarrow \rset$  are  $C^{0,m}$ functions such that  all partial  derivatives
  w.r.t.~$x$ of order $1 \le k \le m$ are bounded,  and  all   partial  
  derivatives w.r.t.~$x$ of $\sigma$ and $\gamma$  of order $0 \le k \le m$ are
  Hölder continuous  in time (uniformly in $x$) with exponent  $\alpha_{\sigma}$ and $\alpha_ {\gamma},$ 
  respectively.   Then every random variable $\xi$ of type $g(X_T)$ or $g\left (\int_0^T X_s ds \right )$
 with $g \in C^{\infty}_b( \rset)$ satisfies
 $\mathcal{H}_m$   with $\beta_{\xi} =\alpha_\sigma \wedge \alpha_\gamma \wedge \frac{1}{2},$ and 
 $\mathcal{H}^3_{p,N}$ for all $p$ and $N$.  \\
 To prove that $\mathcal{H}_m$  is satisfied one can use 
 \cite[Theorem 3]{Petrou_08}, while  $\mathcal{H}^3_{p,N}$ is implied by Remark \ref{rem9}.)
 We sketch  how to compute $\beta_\xi$ of Hypothesis  $\mathcal{H}^2_m$ for $\xi = g(X_T).$ We have  
\begin{align*}
D_u^{(0)}   X_T&=  \int_u^T \partial_{sp}b(s,X_s)   D_u^{(0)} X_s  ds \\
& + \sigma(u,X_u) +\int_u^T  \partial_{sp} \sigma(s,X_s)  D_u^{(0)} X_s  dB_s \\
&+ \int_u^T  \partial_{sp}  \gamma(s,X_{s-})  D_u^{(0)} X_{s-}  d\tilde N_s
  \end{align*} 
and
\begin{align*}
D_u^{(1)}   X_T&=  \int_u^T (b(s,X_s  + D_u^{(1)} X_s) -   b(s,X_s )) ds \\ 
&+\int_u^T (  \sigma(s,X_s+ D_u^{(1)} X_s) -\sigma(s,X_s) )   dB_s \\
&+ \gamma(u,X_{u-}) + \int_u^T (  \gamma(s,X_{s-} +  D_u^{(1)}  X_{s-})  -  \gamma(s,X_{s-}) )  d\tilde N_s.
  \end{align*} 
In order to show   
\[
\e  | D_{t_1}^{\alpha_1}  g(X_T) - D_{s_1}^{\alpha_1}  g(X_T)    |^j \le K_1^{\xi }(j) |t_1-s_1|^{j \beta_{\xi}}
\] 
notice first that in view of  Remark \ref{rem70} it holds  
\[| D_{t_1}^{\alpha_1}  g(X_T) - D_{s_1}^{\alpha_1}  g(X_T) | \le \| g'\|_\infty |D_{t_1}^{\alpha_1}X_T -  D_{s_1}^{\alpha_1} X_T|. \]
For the estimate of   $\e |D_{t_1}^{\alpha_1}X_T -  D_{s_1}^{\alpha_1} X_T|^j$ we apply   for the integrals w.r.t.~the Brownian motion   the Burkholder-Davis-Gundy inequality,   
and  for the integrals w.r.t.~the compensated Poisson process a Kunita-Watanabe inequality (see \cite[Formula (4.21)]{applebaum_09}).
Finally, similar considerations  as in the proof of Lemma \ref{lem50} and  Gronwall's Lemma  imply  that  $\beta_{\xi} =\alpha_\sigma \wedge \alpha_\gamma \wedge \frac{1}{2}.$ 
The general case can be shown by induction.       
       
\end{rem}

 \begin{thm}\label{thm1} Let $m$ be an integer  s.t. $1 \le m \le p+1$. Assume that
  $\xi$ satisfies $\mathcal{H}_{p+q+1}$ and $\mathcal{H}^3_{p,N}$ and $f\in
  C^{0,p+q+1,p+q+1,p+q+1}_b$. We have
  \begin{align*}
    \norm{(Y-{Y}^{q,p,N,M},&Z-{Z}^{q,p,N,M}_{-},U-{U}^{q,p,N,M}_{ -})}^2_{\lp^2}\\
    &\le
    \frac{A_0}{2^q} + \frac{A_1(q,m)}{ (p+2-m)  \cdots (p+1)}+A_2(q,p) \left(\frac{T}{N}
    \right)^{2\beta_{\xi} \wedge 1}+ \frac{A_3(q,p,N)}{M},
  \end{align*} where $A_0$ is given in Section \ref{sect:erreur_picard}, $A_1$
  is given in Proposition \ref{prop1}, $A_2$ is given in Proposition
  \ref{prop4}, and $A_3$ is given in Proposition
  \ref{prop5}.\\
  If $f\in C^{0,\infty,\infty,\infty}_b$ and $\xi$ satisfies
  $\mathcal{H}_m$ for all $m \in \nset^*$ and $\mathcal{H}^3_{p,N}$ for all $(p,N) \in \nset^2$, we get
  \begin{align*}\lim_{q \rightarrow \infty}\lim_{p \rightarrow
      \infty}\lim_{N \rightarrow
      \infty}\lim_{M \rightarrow
      \infty}\norm{(Y-Y^{q,p,N,M},Z-Z^{q,p,N,M},U-U^{q,p,N,M})}^2_{\lp^2}=0.
  \end{align*}
\end{thm}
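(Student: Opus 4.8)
The plan is to interpolate three intermediate triples between the exact solution $(Y,Z,U)$ and the fully discrete scheme $(Y^{q,p,N,M},Z^{q,p,N,M},U^{q,p,N,M})$, namely the exact forward Picard iterate $(Y^q,Z^q,U^q)$ of \eqref{eq:Y_q}--\eqref{eq:ZU_q}, its chaos truncation $(Y^{q,p},Z^{q,p},U^{q,p})$ of \eqref{eq:Y_qp}--\eqref{eq:ZU_qp}, and the basis truncation $(Y^{q,p,N},Z^{q,p,N},U^{q,p,N})$ of \eqref{eq:Y_qpN}--\eqref{eq:ZU_qpN}. Applying the triangle inequality in $\norm{\cdot}_{\lp^2}$ together with the elementary bound $(a+b+c+d)^2\le 4(a^2+b^2+c^2+d^2)$ reduces the claim to four separate estimates, one per layer of approximation. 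Throughout one compares the predictable (left-continuous) versions $Z^{q,p,N,M}_-$, $U^{q,p,N,M}_-$ with the predictable processes $Z,U$, as explained after \eqref{eq:ZU_qpN}.

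\emph{Picard and chaos layers.} The first difference $\norm{(Y,Z,U)-(Y^q,Z^q,U^q)}^2_{\lp^2}$ is handled by the classical exponential contraction of Picard's iterations under the Lipschitz assumption on $f$ (Hypothesis \ref{hypo1}), yielding the term $A_0/2^q$; the details are collected in Section \ref{sect:erreur_picard}. For the second difference, the discrepancy between $(Y^{q},\ldots)$ and $(Y^{q,p},\ldots)$ is created by applying the projection $\cc_p$ at each step. Lemma \ref{lem2} bounds $\e|F-\cc_p(F)|^2$ by $\norm{F}^2_{D^{m}}/\big((p+2-m)\cdots(p+1)\big)$, while Lemma \ref{lem4} transfers this to $\e_t(\cc_p(\cdot))$ and to its Malliavin derivatives. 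It then remains to control $\norm{F^{q,p}}_{D^{m}}$ along the iterations: using the chain rule of Remark \ref{rem70} together with $f\in C^{0,p+q+1,p+q+1,p+q+1}_b$ and Hypothesis $\mathcal{H}_{p+q+1}$ on $\xi$, one propagates the Malliavin-norm bounds through the $q$ iterations by a Gronwall argument, producing the constant $A_1(q,m)$ of Proposition \ref{prop1} and hence the term $A_1(q,m)/\big((p+2-m)\cdots(p+1)\big)$.

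\emph{Basis and Monte Carlo layers.} The third difference isolates $\cc_p(F)-\cc_p^N(F)$, which discards exactly the chaos coefficients carried by basis functions beyond $e_1,\ldots,e_N$. With the piecewise-constant basis \eqref{basis-fixed}, the relation \eqref{kernel-Malliavin-rel} expresses each kernel through $\e D^{\mathbf{i}_k}_{t_1,\ldots,t_k}F$, so the Hölder regularity of these expectations encoded in $\mathcal{H}_m^2$ (exponent $\beta_{\xi}$, cf. Remark \ref{rem12}) makes the cell-wise projection error of order $(T/N)^{2\beta_{\xi}}$, the diagonal contributions accounting for the $\wedge 1$; this gives $A_2(q,p)\,(T/N)^{2\beta_{\xi}\wedge 1}$ as in Proposition \ref{prop4}. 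The fourth difference is the statistical error of replacing the coefficients $d_k^n$ by the empirical means $\widehat{d_k^n}$ of \eqref{d_hat}. Adopting the independent-sample (second) approach of Remark \ref{rem2}, so that $\e_t(\cc_p^{N,M}F)=\e(\cc_p^{N,M}F\mid\cf_t)$, each $\widehat{d_k^n}$ has variance of order $1/M$, and summing the finitely many contributions under Hypothesis $\mathcal{H}^3_{p,N}$ (finiteness of $V_{p,N}(F^{q,p,N})$) yields $A_3(q,p,N)/M$ as in Proposition \ref{prop5}.

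The principal obstacle, as stressed in the Introduction, is that the Poisson chaos admits no hypercontractivity, so one cannot directly bound the higher moments of the chaos terms that arise when propagating the basis and Monte Carlo errors through the nonlinear forward scheme. I would circumvent this exactly as the paper indicates: wherever a moment of a product of multiple integrals $\prod_i I_{n_i}(f_{n_i})$ is needed, I replace it by the explicit combinatorial expression \eqref{expectation-product-formula} of Proposition \ref{E-of-chaos-products}, which furnishes the required estimates unconditionally. Once the four bounds are assembled, the limit statement follows by letting $M\to\infty$, then $N\to\infty$, then $p\to\infty$ (keeping $m$ fixed, e.g. $m=1$, so that $(p+2-m)\cdots(p+1)\to\infty$), and finally $q\to\infty$: under $f\in C^{0,\infty,\infty,\infty}_b$, $\mathcal{H}_{\infty}$ and $\mathcal{H}^3_{\infty,\infty}$ every constant $A_0,A_1(q,m),A_2(q,p),A_3(q,p,N)$ is finite at each fixed index, so each term vanishes in turn and the whole error tends to $0$.
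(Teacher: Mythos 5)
Your proposal is correct and follows essentially the same route as the paper: the proof there is precisely the four-fold splitting into the Picard, chaos-truncation, basis-truncation and Monte Carlo errors, the bound $\norm{\cdot}^2\le 4(\mathcal{E}^q+\mathcal{E}^{q,p}+\mathcal{E}^{q,p,N}+\mathcal{E}^{q,p,N,M})$, and the invocation of \eqref{erreur_YZq} and Propositions \ref{prop1}, \ref{prop4} and \ref{prop5}, with the iterated limit taken in the same order ($M$, then $N$, then $p$, then $q$) so that each constant is finite at the stage where its term is sent to zero. Your additional sketches of how each proposition is established (including the use of Proposition \ref{E-of-chaos-products} in place of hypercontractivity) accurately reflect the arguments in Sections \ref{sect:erreur_picard}--\ref{sect:MC}.
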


 \begin{proof}[Proof of Theorem \ref{thm1}]
  We split the error into $4$ terms :
  \begin{enumerate}
  \item Picard's
    iterations : $\mathcal{E}^q=\norm{(Y-Y^q,Z-Z^q,U-U^q)}^2_{\lp^2}$, where
    $(Y^q,Z^q,U^q)$ is defined by \eqref{eq:Y_q}-\eqref{eq:ZU_q},
  \item the truncation of the chaos decomposition :
    $\mathcal{E}^{q,p}=\norm{(Y^q-Y^{q,p},Z^q-Z^{q,p},U^q-U^{q,p})}^2_{\lp^2}$, where
    $(Y^{q,p},Z^{q,p},U^{q,p})$ is defined by \eqref{eq:Y_qp}-\eqref{eq:ZU_qp},
  \item the
    truncation of the $\lp^2(0,T)$ basis :
    $\mathcal{E}^{q,p,N}=\norm{(Y^{q,p}-Y^{q,p,N},Z^{q,p}-Z^{q,p,N}_-,U^{q,p}-U^{q,p,N}_-)}^2_{\lp^2}$,
    where $(Y^{q,p,N},Z^{q,p,N},U^{q,p,N})$ is defined by \eqref{eq:Y_qpN}-\eqref{eq:ZU_qpN},
  \item the Monte-Carlo approximation to compute the expectations :
    $\mathcal{E}^{q,p,N,M}=\norm{(Y^{q,p,N}-Y^{q,p,N,M},Z^{q,p,N}_--Z^{q,p,N,M}_-,U^{q,p,N}_-
      -U^{q,p,N,M}_-)}^2_{\lp^2}$,
    where $(Y^{q,p,N,M},Z^{q,p,N,M},U^{q,p,N,M})$ is defined by \eqref{eq:Y_qpNM}-\eqref{eq:ZU_qpNM}.
  \end{enumerate}
  We have
\begin{align*}
  \norm{(Y-Y^{q,p,N,M},Z-Z^{q,p,N,M}_-,U-U^{q,p,N,M}_-)}^2_{\lp^2}\le 4(\mathcal{E}^q+\mathcal{E}^{q,p}+\mathcal{E}^{q,p,N}+\mathcal{E}^{q,p,N,M}).
\end{align*} It remains to combine \eqref{erreur_YZq}, Proposition
\ref{prop1}, Proposition \ref{prop4} and Proposition \ref{prop5} to get the first result.
\end{proof}

\subsection{Picard's iterations}\label{sect:erreur_picard}
The first type of error  has already been studied in \cite{TL_94} (see the
proof of Lemma 2.4), we only recall the main result.

From \cite[Lemma 2.4]{TL_94}, we know that under
Hypothesis \ref{hypo1}, the sequence $(Y^q,Z^q,U^q)_q$ defined by
\eqref{eq:Y_q}-\eqref{eq:ZU_q} converges to $(Y,Z,U)$ $d \p
\times dt$ a.e. and in $\spp^2_T(\rset)\times \hpp^2_{T}(\rset) \times \hpp^2_{T}(\rset)$.
Moreover, we have
\begin{align}\label{erreur_YZq}
  \mathcal{E}^q:=\norm{(Y-Y^q,Z-Z^q,U-U^q)}^2_{\lp^2} \le
  \frac{A_0}{2^q},
\end{align}
where the constant $A_0$ depends on $T$, $\norm{\xi}^2$
and on $\norm{f(\cdot,0,0,0)}^2_{\lp^2_{(0,T)}}$.

\subsection{Error due to the truncation of the chaos decomposition}\label{sect:erreur_chaos}
We assume that the integrals are computed exactly, as well as the
expectations. The error is only due to the truncation of the chaos
decomposition $\cc_p$ introduced in \eqref{eq5}.

For the sequel, we also need the following Lemmas. We postpone their proofs to
the Appendix \ref{proof_lem5}.

\begin{lemme}\label{lem5}Let $m \in \nset^*$.
  Assume that $\xi$ satisfies $\mathcal{H}_{m+q}^1$ and $f \in
  \mathcal{C}^{0, m+q,m+q,m+q}_b$. Then $\forall
  q' \le q$, $\forall p \in \nset$, $(Y^{q'},Z^{q'},U^{q'})$ and $(Y^{q',p},Z^{q',p},U^{q',p})$ belong
  to $\mathcal{S}^{m,\infty}$. Moreover,
  \begin{align*}
    \|(Y^{q},Z^{q},U^q)\|^j_{m,j} \le
    C(j,\|\xi\|_{m+q,\frac{(m+q-1)!}{m!}j},(\|\partial^k_{sp}f\|_{\infty})_{k\le
    m+q}).
\end{align*}
\end{lemme}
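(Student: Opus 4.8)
The plan is to argue by induction on the Picard index $q'$, exploiting the fact that each forward step \eqref{eq:Y_q}--\eqref{eq:ZU_q} costs exactly one order of Malliavin differentiability. Concretely, I would prove the slightly stronger statement that for every $0 \le q' \le q$ one has $(Y^{q'},Z^{q'},U^{q'}) \in \mathcal{S}^{m+q-q',\infty}$, together with a quantitative bound on $\|(Y^{q'},Z^{q'},U^{q'})\|_{m+q-q',j}$; specializing to $q'=q$ then yields the assertion in $\mathcal{S}^{m,\infty}$, and the largest regularity demanded of $\xi$ occurs at $q'=0$, namely $\xi \in \mathcal{D}^{m+q,\infty}$, i.e. exactly $\mathcal{H}^1_{m+q}$. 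The base case $q'=0$ is immediate since $(Y^0,Z^0,U^0)=(0,0,0)$.

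For the inductive step, the key object is $F^{q'}=\xi+\int_0^T f(s,Y^{q'}_s,Z^{q'}_s,U^{q'}_s)\,ds$, and the first task is to show $F^{q'}\in\mathcal{D}^{m+q-q',\infty}$ with a controlled norm. This requires an iterated Malliavin chain/Leibniz rule for the composition $f(Y^{q'}_s,Z^{q'}_s,U^{q'}_s)$ that simultaneously handles the Brownian derivative $D^{(0)}$ (the classical chain rule) and the Poisson derivative $D^{(1)}$ (the difference operator $D^{(1)}_t f(\Theta)=f(\Theta+D^{(1)}_t\Theta)-f(\Theta)$, as in Remark \ref{rem70}). Iterating these, a multi-index derivative $D^{\mathbf{i}_l}_{t_1,\ldots,t_l}f(Y^{q'}_s,Z^{q'}_s,U^{q'}_s)$ expands into a finite combinatorial sum of products of partial derivatives of $f$ (evaluated at the original and at jump-shifted arguments) against products of lower-order Malliavin derivatives of $Y^{q'},Z^{q'},U^{q'}$. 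Using $f\in C_b^{0,m+q,m+q,m+q}$ to bound the derivatives of $f$ uniformly, and the induction hypothesis to bound the Malliavin factors, one controls the ess-sup-in-time $L^j$-norms after commuting $\int_0^T(\cdots)\,ds$ with the derivatives and the norms.

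Having placed $F^{q'}$ in $\mathcal{D}^{m+q-q',\infty}$, I would then read off $(Y^{q'+1},Z^{q'+1},U^{q'+1})\in\mathcal{S}^{m+q-q'-1,\infty}$ from the forward formulas: since $Z^{q'+1}$ and $U^{q'+1}$ already carry one Malliavin derivative of $F^{q'}$, asking for their derivatives up to order $m+q-q'-1$ uses derivatives of $F^{q'}$ up to order $m+q-q'$, which is exactly the regularity just established. Here the conditional expectations are handled by the commutation of the Malliavin derivatives with $\e[\cdot\,|\,\m F_t]$ together with the $L^j$-contractivity of conditional expectation (Jensen). The identical induction applies to the truncated iterates $(Y^{q',p},Z^{q',p},U^{q',p})$, using the commutation relation $D^{(i)}_t\e_r[\cc_p(F)]=\e_r[\cc_{p-1}(D^{(i)}_tF)]$ of Lemma \ref{lem4}; membership in $\mathcal{S}^{m,\infty}$ is in fact transparent there because, by Proposition \ref{prop2}, $\cc_p(F^{q',p})$ is an explicit finite combination of Hermite and Charlier polynomials of the Gaussian and Poisson increments, which lies in every $L^j$ and is Malliavin-smooth of all orders (so the absence of Poisson hypercontractivity does not bite at this stage).

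The main obstacle I anticipate is twofold. First, setting up the mixed chain rule cleanly: unlike the purely Gaussian computation of \cite{BL_14}, the Poisson difference operator forces evaluations of $f$ at jump-shifted states and a genuinely combinatorial Faà-di-Bruno/Leibniz bookkeeping, so the expansion of $D^{\mathbf{i}_l}f(Y_s,Z_s,U_s)$ must be stated and controlled with care. Second, tracking the integrability loss: each application of Hölder to split the $L^j$-norm of a product of Malliavin factors raises the required exponent, and iterating over the $\le m+q$ differentiation layers is what produces the factorial integrability index $\tfrac{(m+q-1)!}{m!}j$ on $\xi$ in the final constant. I would isolate this bookkeeping in the appendix, after which the remaining estimates are routine products of bounded quantities since the derivatives of $f$ are uniformly bounded.
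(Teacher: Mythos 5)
Your treatment of the plain Picard iterates $(Y^{q'},Z^{q'},U^{q'})$ is essentially the paper's own proof: induction on the Picard index, the mixed chain/difference rule expanding $D^{\mathbf{i}_l}_{t_1,\dots,t_l} f(\theta^{q'-1}_s)$ into a combinatorial sum of terms $\bigl(\sum_k \|\partial^k_{sp}f\|_\infty\bigr)\,|D^{\mathbf{k}_0}_{\mathbf{t}_0}Y^{q'-1}_s|\,|D^{\mathbf{k}_1}_{\mathbf{t}_1}Z^{q'-1}_s|\,|D^{\mathbf{k}_2}_{\mathbf{t}_2}U^{q'-1}_s|$, and H\"older's inequality, which inflates the integrability index from $j$ to $(l+1)j$ at each step backward and produces the factorial index $\frac{(m+q-1)!}{m!}j$ on $\xi$. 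One minor imprecision: to control $\e[\sup_{r}|D^{\mathbf{i}_l}_{t_1,\dots,t_l}Y^{q'}_r|^j]$ you need Doob's maximal inequality applied to the martingale $r\mapsto\e_r[D^{\mathbf{i}_l}_{t_1,\dots,t_l}F^{q'-1}]$, not merely Jensen/contractivity of conditional expectation.

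There is, however, a genuine gap in your treatment of the chaos-truncated iterates $(Y^{q',p},Z^{q',p},U^{q',p})$, which are part of the statement. You assert that their membership in $\mathcal{S}^{m,\infty}$ is ``transparent'' because $\cc_p(F^{q',p})$ is ``an explicit finite combination of Hermite and Charlier polynomials'', so that the absence of Poisson hypercontractivity does not bite. This confuses $\cc_p$ with the finite-basis operator $\cc_p^N$ of Proposition \ref{prop2}: the projection $\cc_p(F^{q',p})$ is a finite sum of multiple integrals $\sum_{k\le p} I_k(g_k)$ whose kernels are general elements of $(L^2)^{\otimes k}$, not polynomials in finitely many increments. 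Moreover, a fixed-order Poisson chaos with a merely square-integrable kernel need \emph{not} lie in every $L^j$: already $I_1(g)=\int_0^T g\,d\tilde N_s$ has an infinite $j$-th moment when $g\ge 0$, $j$ is even and $g\in L^2(0,T)\setminus L^j(0,T)$, since the moment formula of Proposition \ref{E-of-chaos-products} then contains the divergent term $\kappa\int_0^T g^j\,d\lambda$. This is precisely the failure of hypercontractivity that the paper identifies as the main new difficulty of the jump case, and it bites exactly at the stage you declare harmless. The paper closes this gap in Lemma \ref{lem5b} (Appendix \ref{proof_lem5}): the quantity $\e[|\cc_{p-l}(D^{\mathbf{i}_l}_{t_1,\dots,t_l}F^{q,p})|^j]$ is bounded by expanding the $j$-th power and invoking the product-moment formula \eqref{expectation-product-formula}, which becomes usable only after establishing the sup-norm kernel bounds $\|g_{n+l}\|_\infty \le \frac{1}{(n+l)!}\|F^{q,p}\|_{p,1}$ through the relation \eqref{kernel-Malliavin-rel} between kernels and expected Malliavin derivatives, together with a control of $\|F^{q,p}\|_{p,1}$ by induction. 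Without this step, or some substitute for hypercontractivity, your induction cannot deliver the $L^j$ bounds for the truncated iterates for $j>2$.
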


\begin{lemme}\label{lem5b}
 Assume that $\xi$ satisfies $\mathcal{H}_{p}^1$ and $f \in
  \mathcal{C}^{0,m\vee p,m\vee p,m\vee p}_b$. Then it holds for any $j\ge 1$
\begin{align*}
  \|(Y^{q,p},Z^{q,p},U^{q,p})\|^j_{m,j} \le
  C(p,j,\|\xi\|_{p,1},(\|\partial_{sp}^k f\|_{\infty})_{k\le m\vee p}).
\end{align*}
\end{lemme}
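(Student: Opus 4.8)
The natural strategy is an induction on the Picard index $q$; the case $q=0$ is immediate since $(Y^{0,p},Z^{0,p},U^{0,p})=(0,0,0)$. Setting $\bar m:=m\vee p$, I would in fact carry along the membership $(Y^{q,p},Z^{q,p},U^{q,p})\in\mathcal{S}^{\bar m,\infty}$ and prove the stronger bound on $\|(Y^{q,p},Z^{q,p},U^{q,p})\|_{\bar m,j}$, which dominates the stated one because the sum defining $\|\cdot\|_{m,j}$ runs over $l\le m\le\bar m$. The first reduction uses the commutation rule of Lemma \ref{lem4}, iterated: as long as $F^{q,p}\in\mathbb{D}^{\bar m,2}$ (established along the induction) one has for $l\le p$
\[
   D^{\mathbf{i}_l}_{t_1,\dots,t_l}\,\e_t\!\left[\cc_p(F^{q,p})\right]=\e_t\!\left[\cc_{p-l}\!\left(D^{\mathbf{i}_l}_{t_1,\dots,t_l}F^{q,p}\right)\right]
\]
(the contributions with some $t_i>t$ vanishing), while the derivatives of $Z^{q+1,p}$ and $U^{q+1,p}$ are of the form $\e_{t^-}[\cc_{p'}(D^{\mathbf{i}_{l'}}F^{q,p})]$ with $p'\le p-1$; in particular every Malliavin derivative of order $>p$ is zero, so the generator integral alone survives there. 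Since conditional expectation is an $\lp^j$-contraction and, for the running-supremum part of the $Y$-component, Doob's inequality applies to the martingale $t\mapsto\e_t[\cc_{p-l}(D^{\mathbf{i}_l}F^{q,p})]$, it suffices to control $\operatorname*{ess\,sup}\e[|\cc_{p'}(D^{\mathbf{i}_{l'}}F^{q,p})|^j]$ uniformly in the time variables.

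The decisive step, which substitutes for the absent hypercontractivity, is that by \eqref{kernel-Malliavin-rel} the chaos kernels of $\cc_{p'}(G)$ are precisely the expectations $\frac1{k!}\e[D^{\mathbf{j}_k}G]$ ($k\le p'$), so their $L^\infty([0,T]^k)$-norms involve only \emph{first} moments of the Malliavin derivatives of $G$. Writing $\cc_{p'}(G)=\sum_{k\le p'}I_k(g_k)$, expanding an even power $|\cc_{p'}(G)|^{2r}$ into a sum of products $\prod_{s=1}^{2r}I_{k_s}(g_{k_s})$, and applying the product formula \eqref{expectation-product-formula} of Proposition \ref{E-of-chaos-products}, each expectation becomes a finite combinatorial sum of integrals, over at most $2rp$ time variables, of tensor products of the kernels. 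Bounding these by the product of the kernel sup-norms times a power of $T$ yields
\[
  \e\!\left[|\cc_{p'}(G)|^{2r}\right]\le C(p,r,T,\kappa)\Big(\max_{k\le p'}\operatorname*{ess\,sup}\big|\e D^{\mathbf{j}_k}G\big|\Big)^{2r},
\]
with odd moments following by Cauchy--Schwarz. This converts a first-moment control of the derivatives of $G$ into a full $\lp^j$ control of the truncated chaos.

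It remains to apply this with $G=D^{\mathbf{i}_{l'}}F^{q,p}=D^{\mathbf{i}_{l'}}\xi+\int_0^T D^{\mathbf{i}_{l'}}f(\theta^{q,p}_s)\,ds$, where $\theta^{q,p}_s=(s,Y^{q,p}_s,Z^{q,p}_s,U^{q,p}_s)$. The kernel norms coming from $\xi$ are $\operatorname*{ess\,sup}|\e D^{\mathbf{j}_k}D^{\mathbf{i}_{l'}}\xi|$; since the truncation forces the total differentiation order $k+l'$ applied to $F^{q,p}$ to stay $\le p$, these are all bounded by $\|\xi\|_{p,1}$, which is exactly why only the weak norm of $\xi$ survives in the final constant: in the truncated scheme $\xi$ enters the kernels only linearly and only up to order $p$. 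For the generator term I would use the Faà di Bruno expansion obtained by iterating Remark \ref{rem70}: $D^{\mathbf{j}_k}f(\theta^{q,p}_s)$ is a finite sum of products of partial derivatives of $f$ (bounded by $\|\partial^{k'}_{sp}f\|_\infty$, $k'\le\bar m$) with Malliavin derivatives of $(Y^{q,p},Z^{q,p},U^{q,p})$ of total order $\le k$; taking expectations and using Hölder's inequality bounds them by $\|(Y^{q,p},Z^{q,p},U^{q,p})\|_{\bar m,j''}$ for a finite $j''=j''(\bar m,r)$, which the induction hypothesis controls. The extra term $-\int_0^t f(\theta^{q,p}_s)\,ds$ of $Y^{q+1,p}$ is treated directly by the same chain-rule and Hölder estimate. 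Summing over $\mathbf{i}_{l'}$, $l'\le\bar m$, and over the three components closes the inductive step; one finally checks, as in the a priori estimates leading to \eqref{erreur_YZq}, that the constant can be chosen independent of $q$, the Lipschitz character of $f$ and the time integration providing the required stability.

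The main obstacle is the second paragraph: making the use of \eqref{expectation-product-formula} for arbitrary even moments rigorous, namely verifying the integrability hypothesis of Proposition \ref{E-of-chaos-products} for the relevant tensor products and controlling the combinatorial constant $C(p,r,T,\kappa)$ so that the estimate stays finite and uniform in the time variables. The Faà di Bruno bookkeeping and the closing of the recursion on the moment order $j''$ (hence the $q$-uniformity) are more routine, but they must be arranged with some care.
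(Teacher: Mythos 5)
Your proposal follows essentially the same route as the paper's proof: induction on $q$, commuting the Malliavin derivatives with $\cc_p$ and the conditional expectation, Doob's inequality for the $Y$-component, and --- crucially --- replacing hypercontractivity by the product formula of Proposition \ref{E-of-chaos-products} combined with \eqref{kernel-Malliavin-rel}, so that the kernel sup-norms are controlled by the first-moment norm $\|F^{q,p}\|_{p,1}$, hence by $\|\xi\|_{p,1}$ plus chain-rule/H\"older terms absorbed by the induction hypothesis. The step you single out as the main obstacle is carried out in the paper exactly as you describe it, in \eqref{E-of-product-of-chaos}--\eqref{eq32}.
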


 \begin{prop}\label{prop1} Let $1 \le m \le p+1$. Assume that $\xi$
  satisfies $\mathcal{H}^1_{m+q}$ and $f\in
  C^{0,m+q,m+q,m+q}_b$. We recall $\mathcal{E}^{q,p}=\norm{(Y^q-Y^{q,p},Z^q-Z^{q,p},U^q-U^{q,p})}^2_{\lp^2}$. We get  
  \begin{align}\label{eq7}
    \mathcal{E}^{q+1,p} \le C_1 T(T+1) L_f^2  \mathcal{E}^{q,p} +\frac{K_1(q,m)}{ (p+2-m) \cdots (p+1)},
  \end{align}
   where  $C_1$ is a scalar and the constant $K_1(q,m)$ depends on $T$, $m$,
   $\|\xi\|_{m+q,2\frac{(m+q-1)!}{ (m-1)!}}$ and
  on $(\norm{\partial^k_{sp} f}_{\infty})_{1\le k \le m+q}$.  \smallskip

  Since $\mathcal{E}^{0,p}=0$, we deduce from \eqref{eq7} that $
  \mathcal{E}^{q,p}\le \frac{A_1(q,m)}{ (p+2-m) \cdots (p+1)}$ where $
  A_1(q,m):=
\frac{(C_1T(T+1)L_f^2)^q-1}{C_1T(T+1)L_f^2-1} \times  \max_{1\le l \le q }K_1(l,m)$. Then, $(Y^{p,q},Z^{p,q},U^{q,p})$
  converges to $(Y^q,Z^q,U^q)$ when $p$ tends to $\infty$ in
  $\norm{(\cdot,\cdot,\cdot)}_{\lp^2}$ (see \eqref{norme_L2_YZ} for the definition of
  the norm).
\end{prop}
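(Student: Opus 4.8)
The plan is to compare the forward Picard scheme \eqref{eq:Y_q}--\eqref{eq:ZU_q} with its chaos-truncated analogue \eqref{eq:Y_qp}--\eqref{eq:ZU_qp} and to reduce the entire one-step error $\mathcal{E}^{q+1,p}$ to a single $\lp^2$-quantity, namely $\e|F^q-\cc_p(F^{q,p})|^2$. The starting observation is that $t\mapsto \e_t[F^q-\cc_p(F^{q,p})]$ is a square-integrable martingale; by the Clark--Ocone representation its Brownian and compensated-Poisson integrands are precisely the predictable projections of the Malliavin derivatives, which by \eqref{eq:ZU_q} and \eqref{eq:ZU_qp} are exactly $Z^{q+1}-Z^{q+1,p}$ and $U^{q+1}-U^{q+1,p}$. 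Since $B$ and $\tilde N$ are orthogonal with $\langle B\rangle_t=t$ and $\langle \tilde N\rangle_t=\kappa t$, the It\^o isometry then yields
\[
\int_0^T \e|Z^{q+1}_s-Z^{q+1,p}_s|^2\,ds+\kappa\int_0^T\e|U^{q+1}_s-U^{q+1,p}_s|^2\,ds=\V\big(F^q-\cc_p(F^{q,p})\big)\le \e|F^q-\cc_p(F^{q,p})|^2 .
\]
This is the crucial step: it disposes of the $Z$ and $U$ errors without ever differentiating the two schemes. For the $Y$-part I would use $Y^{q+1}_t-Y^{q+1,p}_t=\e_t[F^q-\cc_p(F^{q,p})]-\int_0^t\big(f(s,Y^q_s,Z^q_s,U^q_s)-f(s,Y^{q,p}_s,Z^{q,p}_s,U^{q,p}_s)\big)\,ds$, controlling the first term by Doob's $\lp^2$-inequality (giving $4\,\e|F^q-\cc_p(F^{q,p})|^2$) and the second by Cauchy--Schwarz.

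The next step is the central estimate of $\e|F^q-\cc_p(F^{q,p})|^2$, via the splitting $F^q-\cc_p(F^{q,p})=\big(F^q-\cc_p(F^q)\big)+\cc_p\big(F^q-F^{q,p}\big)$, where I used the linearity of $\cc_p$. The genuinely ``truncation'' term is handled by Lemma \ref{lem2}: $\e|F^q-\cc_p(F^q)|^2\le \norm{F^q}^2_{D^{m}}/\big((p+2-m)\cdots(p+1)\big)$. The ``propagation'' term is controlled by the $\lp^2$-contractivity of $\cc_p$ from Lemma \ref{lem4}, so that $\e|\cc_p(F^q-F^{q,p})|^2\le\e|F^q-F^{q,p}|^2$; since $F^q-F^{q,p}=\int_0^T\big(f(s,Y^q_s,Z^q_s,U^q_s)-f(s,Y^{q,p}_s,Z^{q,p}_s,U^{q,p}_s)\big)\,ds$, the Lipschitz assumption on $f$ together with Cauchy--Schwarz and the definition \eqref{norme_L2_YZ} bounds it by $C\,T\,L_f^2\,\mathcal{E}^{q,p}$. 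Collecting the $Y$-, $Z$- and $U$-contributions produces the affine recursion \eqref{eq7}, the factor $T(T+1)$ arising from the Cauchy--Schwarz bounds on the various time integrals.

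It then remains to turn the truncation constant $\norm{F^q}^2_{D^m}$ into the advertised $K_1(q,m)$. Here I would invoke Lemma \ref{lem5}: under $\mathcal{H}^1_{m+q}$ and $f\in C^{0,m+q,m+q,m+q}_b$ the triple $(Y^q,Z^q,U^q)$ lies in $\mathcal{S}^{m,\infty}$ with norm bounded in terms of $\norm{\xi}_{m+q,\cdot}$ and $(\norm{\partial^k_{sp}f}_{\infty})_{k\le m+q}$; writing $F^q=\xi+\int_0^T f(s,Y^q_s,Z^q_s,U^q_s)\,ds$ and applying the chain rule for iterated Malliavin derivatives (as in Remark \ref{rem70}) bounds $\norm{F^q}^2_{D^m}$, hence $K_1(q,m)$, by exactly these data. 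Finally, since $\mathcal{E}^{0,p}=0$, unrolling the affine recursion \eqref{eq7} is a routine geometric summation and yields $\mathcal{E}^{q,p}\le A_1(q,m)/\big((p+2-m)\cdots(p+1)\big)$ with the stated $A_1(q,m)$, whence convergence as $p\to\infty$.

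The main obstacle, and the reason the jump case is not a verbatim transcription of the Brownian one, is the treatment of the $Z$ and $U$ errors: a direct comparison would force one to estimate differences of Malliavin derivatives $D^{(i)}\big(F^q-F^{q,p}\big)$, which are not measured by $\mathcal{E}^{q,p}$ and whose control would otherwise require a nested induction over the derivative order. The martingale-representation identity displayed above is precisely what circumvents this, collapsing both $Z$ and $U$ into $\V\big(F^q-\cc_p(F^{q,p})\big)$. The only remaining care is to ensure $F^q,\cc_p(F^{q,p})\in\D^{1,2}$ so that Clark--Ocone applies, and that $\norm{F^q}_{D^m}$ is finite and bounded uniformly in the relevant Picard indices, both guaranteed by Lemmas \ref{lem5} and \ref{lem5b}.
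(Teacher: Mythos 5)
Your proposal is correct and follows essentially the same route as the paper: Doob's inequality on $\e_t[F^q-\cc_p(F^{q,p})]$ for the $Y$-part, the Clark--Ocone/It\^o-isometry identity for the $Z$- and $U$-parts, the splitting of $F^q-\cc_p(F^{q,p})$ into a pure truncation term controlled by Lemma \ref{lem2} and a propagation term controlled by the contractivity of $\cc_p$ plus the Lipschitz property of $f$, and finally Lemma \ref{lem5} to bound $\norm{F^q}^2_{D^m}$. The only cosmetic differences are that the paper bounds the martingale part by re-expressing it through $\Delta Y^{q+1,p}_T-\Delta Y^{q+1,p}_0$ rather than directly by $\V(F^q-\cc_p(F^{q,p}))$, and that it commutes $\cc_p$ with the time integral so as to apply Lemma \ref{lem2} to $\xi$ and to $f(s,Y^q_s,Z^q_s,U^q_s)$ pointwise in $s$ instead of to $F^q$ as a whole; neither changes the substance of the argument.
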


\begin{rem} We deduce from Proposition \ref{prop1} that for all $T$ and $L_f$,
  we have $ lim_{p\rightarrow \infty}
  \mathcal{E}^{q,p}=0$. When $C_1T(T+1) L_f^2<1$, i.e. for $T$ small enough,  and  if $\xi$ satisfies $\mathcal{H}^1_{\infty}$ and $f
  \in \mathcal{C}_b^{0,\infty,\infty,\infty}$, we
  also get $\ lim_{p\rightarrow \infty} lim_{q\rightarrow \infty}
  \mathcal{E}^{q,p}=0$. \!Indeed, it holds  $\lim_{q\rightarrow \infty} \mathcal{E}^{q,p}\!\le \frac{\sup_j K_1(j,m)}{1-C_1T(T+1)L_f^2}
    \frac{1}{(p+2-m)\cdots(p+1)}$  and $\sup_j K_1(j,m) < \infty$ since from the proof of Proposition \ref{prop1} one concludes that 
    $K_1(j,m)=60(\|\xi\|^2_{D^m}+T\int_0^T \| f(s,Y^j_s,Z^j_s,U^j_s)\|^2_{D^m}
  ds) \le C(T,m,\|\xi\|_{m+j,2\frac{(m+j-1)!}{(m-1)!}},(\| \partial_{sp}^k f
  \|_{k\le m+j}))$. 
   \end{rem}

\begin{proof}[Proof of Proposition \ref{prop1}] In the following, we denote $\Delta Y^{q,p}_t :=
  Y^{q,p}_t-Y^{q}_t$, $\Delta Z^{q,p}_t :=
  Z^{q,p}_t-Z^{q}_t$, $\Delta U^{q,p}_t :=
  U^{q,p}_t-U^{q}_t$ and $\Delta f^{q,p}_t:=f(t,Y^{q,p}_t,Z^{q,p}_t,U^{q,p}_t)-f(t,Y^{q}_t,Z^{q}_t,U^q_t)$.
  Firstly, we deal with $\e[\sup_{0\le t \le T}
  |\Delta Y^{q+1,p}_t|^2]$.
  From \eqref{eq:Y_q} and \eqref{eq:Y_qp} we get
  \begin{align*}
    \Delta Y^{q+1,p}_t=&\e_t[\cc_p(F^{q,p})-F^q]-\int_0^t
    \Delta f^{q,p}_s ds,\\
    =&\e_t[\cc_p(\xi)-\xi]+\e_t\left[\cc_p\left(\int_0^T f(s,Y^{q,p}_s,Z^{q,p}_s,U^{q,p}_s)
      ds\right)-\int_0^T f(s,Y^{q}_s,Z^{q}_s,U^q_s) ds\right] \\
&-\int_0^t
    \Delta f^{q,p}_s ds.
  \end{align*}
  We introduce $\pm \cc_p\left(\int_0^T f(s,Y^{q}_s,Z^{q}_s,U^q_s) ds \right)$ in
  the second conditional expectation. This leads to
  \begin{align*}
    \Delta Y^{q+1,p}_t=&\e_t[\cc_p(\xi)-\xi]+\e_t\left[\cc_p\left(\int_0^T
        \Delta f^{q,p}_s ds \right)\right]-\int_0^t
    \Delta f^{q,p}_s ds\\
    &+\e_t\left[\int_0^T \cc_p
      (f(s,Y^{q}_s,Z^{q}_s,U^q_s)) - f(s,Y^{q}_s,Z^{q}_s,U^q_s) ds\right],
  \end{align*} where we have used the second property of Lemma \ref{lem4} to
  rewrite the third term on the right-hand side (r.h.s. for short).

From the previous equation, we bound $\e[\sup_{0\le t \le T} |\Delta Y^{q+1,p}_t|^2]$ by
using Doob's maximal inequality and the Lipschitz property of $f$
 \begin{align*}
    \norm{\sup_{0\le t \le T} | \Delta Y^{q+1,p}_t| }_2 &\le 2
     \norm{  \cc_p(\xi)-\xi}_2+  2\left \| \cc_p\left(\int_0^T \Delta f^{q,p}_s ds \right) \right \|_2 \\
   & + 2 \int_0^T \norm{\cc_p(f(s,Y^{q}_s,Z^{q}_s,U^q_s))- f(s,Y^{q}_s,Z^{q}_s,U^q_s) }_2 ds \\ 
    &+ L_f \int_0^T
    \norm{|\Delta Y^{q,p}_s|+|\Delta Z^{q,p}_s|+|\Delta U^{q,p}_s|}_2 ds.
  \end{align*}
 To bound the second term on the r.h.s.  of the previous inequality, we use the first
 property of Lemma \ref{lem4} and the Lipschitz property of $f$. Then, we
 bring together this term with the last one to get

 \begin{align}\label{eq50}
   \norm{\sup_{0\le t \le T} | \Delta Y^{q+1,p}_t| }_2 &\le 2
   \norm{  \cc_p(\xi)-\xi}_2 + 2 \int_0^T \norm{\cc_p(f(s,Y^{q}_s,Z^{q}_s,U^q_s))- f(s,Y^{q}_s,Z^{q}_s,U^q_s) }_2 ds \nonumber \\ 
   &+3 L_f \int_0^T
   \norm{|\Delta Y^{q,p}_s|+|\Delta Z^{q,p}_s|+|\Delta U^{q,p}_s|}_2 ds.
 \end{align}

    Let us now upper bound $  \int_0^T \norm{\Delta Z^{q+1,p}_s}_2^2 ds+  \kappa \int_0^T\norm{ \Delta U^{q+1,p}_s}^2_2 ds$. To do
    so, we use the Itô isometry $ \int_0^T \norm{\Delta Z^{q+1,p}_s}_2^2 ds+ \kappa \int_0^T\norm{ \Delta U^{q+1,p}_s}^2_2 ds= 
    \norm{\int_0^T \Delta Z^{q+1,p}_s dB_s+ \Delta U^{q+1,p}_s d \tilde{N}_s}_2^2.$

Using the Definitions
      \eqref{eq:ZU_q}-\eqref{eq:ZU_qp} of $(Z^{q+1}_t,U^{q+1}_t)$ and
      $(Z^{q+1,p}_t,U^{q+1,p}_t)$ and the Clark-Ocone Formula (see
      \cite[Theorem 1.8]{LSUV_02}) leads to
      \begin{align*}
       \int_0^T
       \Delta Z^{q+1,p}_s dB_s+\int_0^T
       \Delta U^{q+1,p}_s d\tilde{N}_s&=F^q-\e(F^q)-(\cc_p(F^{q,p})-\e(\cc_p(F^{q,p}))),\\
       &=Y^{q+1}_T+\int_0^T f(s,Y_s^q,Z^q_s,U^q_s)ds-Y^{q+1}_0\\
       &-\left(Y^{q+1,p}_T+\int_0^T f(s,Y_s^{q,p},Z^{q,p}_s,U^{q,p}_s)ds-Y^{q+1,p}_0\right).
     \end{align*} Rearranging this summation makes appear
     $\Delta Y^{q+1,p}_T - (\Delta Y^{q+1,p}_0)$. We get

 \begin{align}\label{eq60}
 \int_0^T \norm{\Delta Z^{q+1,p}_s}_2^2 ds+ \kappa \int_0^T\norm{ \Delta U^{q+1,p}_s}^2_2 ds 
&\le 4 \norm{\sup_{0\le t \le T} | \Delta Y^{q+1,p}_t| }_2^2 \nonumber \\
&+2L_f^2 \left (\int_0^T
     \norm{\Delta Y^{q,p}_s}_2+ \norm{\Delta Z^{q,p}_s}_2+ \norm{\Delta U^{q,p}_s}_2 ds \right )^2.
 \end{align}

Since $\left (\int_0^T \norm{\Delta Y^{q,p}_s}_2+ \norm{\Delta Z^{q,p}_s}_2+
  \norm{\Delta U^{q,p}_s}_2 ds \right )^2 \le
\frac{3(1+\kappa)}{\kappa}T(T+1) \mathcal{E}^{q,p}$, by computing $
5\times {\eqref{eq50}}^2 +\eqref{eq60}$  we obtain
\begin{align*}
    \mathcal{E}^{q+1,p} &\le 60\norm{ \cc_p(\xi)-\xi}_2^2
    +60 T \int_0^T \norm{\cc_p(f(s,Y^{q}_s,Z^{q}_s,U^q_s))- f(s,Y^{q}_s,Z^{q}_s,U^q_s) }_2^2 ds \nonumber \\
    &+137  \frac{3(1+\kappa)}{\kappa}  T(T+1)L_f^2  \mathcal{E}^{q,p}.
      \end{align*} 

Since $\xi$ and $f(s,Y^q_s,Z^q_s,U^q_s)$ belong to $\mathbb{D}^{m,2}$ ($\xi$ satisfies $\mathcal{H}^1_{m+q}$, $f \in C^{0,m+q,m+q,m+q}_b$ and
$(Y^q,Z^q,U^q) \in \mathcal{S}^{m,\infty}$ (see Lemma \ref{lem5})), Lemma \ref{lem2} gives
\begin{align*}
  \mathcal{E}^{q+1,p} \le& \frac{60 \norm{\xi}^2_{D^{m}}}{(p+2-m)\cdots(p+1)}
  +\frac{60 T}{(p+2-m)\cdots(p+1)}\int_0^T\norm{ f(s,Y_s^q,Z^q_s,U^q_s)}^2_{D^{m}} ds 
  \\&+ \frac{411(1+\kappa)}{\kappa} T(T+1)L_f^2  \mathcal{E}^{q,p}.
\end{align*}

Since $\int_0^T\!\norm{f(s,Y_s^q,Z^q_s,U^q_s)}^2_{D^{m}} ds$ is
bounded by $C(T,m,(\norm{\partial^k_{sp} f}_{\infty})_{k \le
  m},\!\norm{(Y^q,Z^q,U^q)}^{2m}_{m,2m})$ (see \eqref{eq26}, in the
proof of Lemma \ref{lem5}), Lemma
\ref{lem5} gives the result.

\end{proof}

\subsection{Error due to the truncation of the basis }\label{sect:erreur_basis}

Fix $N \in \nset^*$ and put $h=\frac{T}{N}.$ 
Use $\{p_0, p_1\}= \{ \ind_{\{0\}},  \frac{1}{\sqrt{\kappa }} \ind_{\{1\}}  \}$ as orthonormal basis of $L^2(\{0,1\}, 2^{\{0,1\}}, \delta_0 + \kappa \delta_1)$ and 
fix an orthonormal basis $\{e_k\}_{k \in \nset^*} $ for $L^2([0,T], \m B ([0,T]),  \lambda)$ such that  $\ov{t}_i=ih$ for $ i=0,1,\ldots,N$ and
 \[              e_i = \frac{1}{\sqrt{h}}\ind_{]\ov{t}_{i-1},\ov{t}_i]}(t), \quad 1 \le i \le N.
\]
\begin{lemme} \label{basis-truncation}
Assume $F  = \e [F] + \sum_{n=1}^\infty I_n(g_n) \in  \lp^2(\m F_T)$ satisfies
\eqref{eq31} with $m=p$. Then 
\begin{align*}
  \e|   (\mathcal{C}_p^N -\mathcal{C}_p)(F)|^2 \le  \ov{K_p^F} \bigg ( \frac{T}{N} \bigg)^{2 \beta_F} \sum_{i=1}^{p} i^2 \frac{T^i}{i!} \le \ov{K_p^F}
  \bigg ( \frac{T}{N} \bigg)^{2 \beta_F}  T(1+T) e^T.
\end{align*}
where $\ov{K_p^F}:=\sum_{j=1}^p (K_j^F)^2$ (with $K_j^F:= K_j^F(1)$ from \eqref{eq31}).
\end{lemme}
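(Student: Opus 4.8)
The plan is to pass to the multiple--integral representation of Proposition \ref{chaos-expansion}, where the truncation of the basis becomes a tensor projection on the chaos kernels. Writing $F = \e[F] + \sum_{n\ge 1} I_n(g_n)$ with $g_n$ symmetric, keeping only the functions $(e_1,\ldots,e_N)$ replaces each kernel $g_k$ by $P_N^{\otimes k} g_k$, where $P_N = \pi_N \otimes \mathrm{Id}_{\{0,1\}}$ and $\pi_N$ is the orthogonal projection of $L^2([0,T],\lambda)$ onto $\mathrm{span}(e_1,\ldots,e_N)$; the space index $\{0,1\}$ is untouched because both $p_0$ and $p_1$ are retained. Hence $(\cc_p^N - \cc_p)(F) = \sum_{k=1}^p I_k\big((P_N^{\otimes k} - \mathrm{Id})g_k\big)$, and the isometry \eqref{isometry-formula} kills the cross terms, leaving
\[
\e|(\cc_p^N - \cc_p)(F)|^2 = \sum_{k=1}^p k!\,\big\|(P_N^{\otimes k} - \mathrm{Id})g_k\big\|^2_{(L^2)^{\otimes k}(\lambda\otimes(\delta_0+\kappa\delta_1))}.
\]

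First I would reduce the $k$--fold projection error to a one--coordinate error through the telescoping identity
\[
P_N^{\otimes k} - \mathrm{Id} = \sum_{j=1}^k \mathrm{Id}^{\otimes(j-1)}\otimes(P_N - \mathrm{Id})\otimes P_N^{\otimes(k-j)}.
\]
Since $P_N$ is an orthogonal projection, hence a contraction, and $g_k$ is symmetric, all $k$ summands have the same norm, each dominated by the norm of $(P_N-\mathrm{Id})$ applied to a single coordinate of $g_k$; the triangle inequality then produces a factor $k$, i.e.\ a factor $k^2$ after squaring. This $k^2$ is exactly the $i^2$ of the claimed sum.

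The heart of the matter is the one--coordinate estimate, and here the choice \eqref{basis-fixed}, $e_i = h^{-1/2}\ind_{]\ov{t}_{i-1},\ov{t}_i]}$, is what makes everything work: $\pi_N$ is precisely the cell--averaging operator on the regular grid, so on a cell of length $h=T/N$ the function $(\pi_N-\mathrm{Id})\psi$ records the oscillation of $\psi$ over that cell. By \eqref{kernel-Malliavin-rel} one has $g_k = (k!)^{-1}\e D^{\mathbf{i}_k}_{\,\cdot\,}F$, so the H\"older hypothesis \eqref{eq31}, used at order $k$ with constant $K^F_k$, bounds the oscillation in each time variable by $(k!)^{-1}K^F_k\,h^{\beta_F}$. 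A direct computation gives $\|(\pi_N-\mathrm{Id})\psi\|^2_{L^2([0,T])}\le (k!)^{-2}(K^F_k)^2\,T\,h^{2\beta_F}$ for one fixed variable; integrating over the remaining $k-1$ time variables contributes $T^{k-1}$, and integrating the space indices against $\delta_0+\kappa\delta_1$ contributes only a bounded ($\kappa$--dependent) factor that I absorb into the constant. The surviving $(k!)^{-2}$ combines with the prefactor $k!$ from the isometry, so that the $k$--th summand is bounded by $(K^F_k)^2\,k^2\,(T^k/k!)\,h^{2\beta_F}$.

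Finally I would assemble the estimate. Bounding $(K^F_k)^2\le \ov{K_p^F}$ for every $k\le p$ and summing yields
\[
\e|(\cc_p^N - \cc_p)(F)|^2 \le \ov{K_p^F}\Big(\frac{T}{N}\Big)^{2\beta_F}\sum_{k=1}^p k^2\frac{T^k}{k!},
\]
and the second inequality of the lemma follows from the elementary identity $\sum_{i\ge 1} i^2 T^i/i! = T(1+T)e^T$ together with positivity of the terms. I expect the main difficulty to be precisely the one--coordinate averaging estimate and the bookkeeping that surrounds it: the factor $k!$ from the isometry, the $(k!)^{-1}$ built into the kernel $g_k$, the simplex volume $T^k/k!$, and the weight coming from the measure $\delta_0+\kappa\delta_1$ must all be tracked so that exactly $k^2 T^k/k!$ survives for each chaos order.
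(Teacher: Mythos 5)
Your proof is correct and follows essentially the same route as the paper's: both identify $\cc_p^N$ with the orthogonal projection of each chaos kernel onto the piecewise-constant tensor basis, use the isometry \eqref{isometry-formula} to decouple the chaos orders, and convert the cell-averaging error into the coordinate-wise H\"older bound \eqref{eq31} via \eqref{kernel-Malliavin-rel}, so that the factor $k^2\,T^k/k!$ emerges from the same bookkeeping (your telescoping of $P_N^{\otimes k}-\mathrm{Id}$ merely replaces the paper's direct estimate of the full $k$-dimensional cell average, giving the identical count of $k$ one-coordinate oscillations). The $\kappa$-dependent factor $(1+\kappa)^k$ that you absorb into the constant also appears, unabsorbed, in the last line of the paper's own computation, so the two arguments agree on that point as well.
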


We refer to Section \ref{sec:proof_basis-truncation} for a proof of Lemma
\ref{basis-truncation}.

\begin{lemme}\label{lem50}
   Assume $\xi$ satisfies $\mathcal{H}_{p}$ (i.e. Hypothesis \ref{hypo3}) and $f \in
  C^{0, p ,  p,  p}_b$. Then, for all integers  $q\ge0$, $I_{q,p}:=\int_0^T
  f(s,Y^{q,p}_s,Z^{q,p}_s,U^{q,p}_s) ds$ satisfies $\mathcal{H}_p$ so that by Remark \ref{rem12} for all $1 \le r \le p$ and 
  multi-indices $\mathbf{i}_r\in \{0,1\}^r$ and  for a.e. $(t_1,\cdots,t_r) \in [0,T]^r$ and $(s_1,\cdots,s_r) \in [0,T]^r$  we have 
\begin{align*}
  |\e(D^{\mathbf{i}_r}_{t_1,\cdots,t_r}I_{q,p})-\e(D^{\mathbf{i}_r}_{s_1,\cdots,s_r}I_{q,p})|\le
  K^{I_{q,p}}_r(|t_1-s_1|^{\beta_{I_{q,p}}}+\cdots+|t_r-s_r|^{\beta_{I_{q,p}}}),
\end{align*} where $\beta_{I_{q,p}}=\frac{1}{2}\wedge \beta_{\xi},$ and  the constant $K^{I_{q,p}}_r$ depends
on   $ K^{\xi}_r$, $\|\xi\|_{p,1},$   $T$ and on
$(\norm{\partial^k_{sp} f}_{\infty})_{1\le k \le  p}$. 
\end{lemme}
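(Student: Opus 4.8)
The plan is to verify the two components of Hypothesis $\mathcal{H}_p$ (Hypothesis \ref{hypo3}) for $I_{q,p}$: the integrability requirement $\mathcal{H}^1_p$ that $I_{q,p}\in\mathcal{D}^{p,j}$ for every $j$, and the fractional-smoothness requirement $\mathcal{H}^2_p$ controlling the Malliavin-time increments of the derivatives of $I_{q,p}$. Once both hold, the displayed estimate is precisely the level-$r$ instance \eqref{eq31} of Remark \ref{rem12} read off for $F=I_{q,p}$, with $\beta_F=\beta_{I_{q,p}}$. Throughout I abbreviate $\theta^{q,p}_s:=(Y^{q,p}_s,Z^{q,p}_s,U^{q,p}_s)$.

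The condition $\mathcal{H}^1_p$ is routine. By Lemma \ref{lem5b} the triple $(Y^{q,p},Z^{q,p},U^{q,p})$ lies in $\mathcal{S}^{p,\infty}$, so all its iterated Malliavin derivatives up to order $p$ have finite $\lp^j$-norms. Applying the iterated chain rule for $D^{(0)}$ together with the difference rule for $D^{(1)}$ of Remark \ref{rem70} to the map $s\mapsto f(s,\theta^{q,p}_s)$, one expresses $D^{\mathbf{i}_r}_{t_1,\cdots,t_r}f(s,\theta^{q,p}_s)$ as a finite sum of products, each consisting of a bounded derivative (respectively a bounded iterated finite difference) of $f$ --- bounded because $f\in C^{0,p,p,p}_b$ --- and of factors $D^{\mathbf{j}}_{(\cdot)}\theta^{q,p}_s$. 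Integrating in $s$ and using Minkowski's and H\"older's inequalities yields $\|I_{q,p}\|_{p,j}<\infty$.

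The core of the argument is $\mathcal{H}^2_p$. Using the same chain/difference expansion, the increment obtained by replacing $t_i$ with $s_i$ telescopes into terms in which a single factor $D^{\mathbf{j}}_{(\cdot)}\theta^{q,p}_s$ has its $i$-th Malliavin time moved from $t_i$ to $s_i$, while the remaining factors are bounded uniformly in $\lp^j$ by $\theta^{q,p}\in\mathcal{S}^{p,\infty}$ and by $(\|\partial^k_{sp}f\|_\infty)_{k\le p}$. Hence $\mathcal{H}^2_p$ for $I_{q,p}$ reduces to the process-level analogue of $\mathcal{H}^2_p$ --- a Malliavin-time H\"older estimate with exponent $\tfrac12\wedge\beta_\xi$ --- for $Y^{q,p},Z^{q,p},U^{q,p}$ themselves. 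I would establish this process estimate by induction on $q$. The base case $q=0$ is immediate since $\theta^{0,p}=0$, so that $I_{0,p}=\int_0^T f(s,0,0,0)\,ds$ is deterministic. For the inductive step, the estimate at index $q$ already yields $I_{q,p}\in\mathcal{H}_p$ by the reduction just described, whence $F^{q,p}=\xi+I_{q,p}\in\mathcal{H}_p$, since $\xi\in\mathcal{H}_p$ and the class is stable under sums. The representations \eqref{eq:Y_qp}--\eqref{eq:ZU_qp}, combined with the commutation rule $D^{(i)}_t\e_r[\cc_p(F)]=\e_r[\cc_{p-1}(D^{(i)}_tF)]$ of Lemma \ref{lem4}, then transport the $\mathcal{H}_p$-regularity of $F^{q,p}$ to the process estimate at index $q+1$; the extra $\tfrac12$ in the exponent is the square-root time-regularity contributed by the stochastic-integral representation of $Z^{q+1,p}$ and $U^{q+1,p}$, which caps $\beta_\xi$. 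This closes the induction and, via the reduction, proves $I_{q,p}\in\mathcal{H}_p$ for all $q$.

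Tracking the constants through the two expansions gives $\beta_{I_{q,p}}=\tfrac12\wedge\beta_\xi$ and the asserted dependence of $K^{I_{q,p}}_r$ on $K^\xi_r$, $\|\xi\|_{p,1}$, $T$ and $(\|\partial^k_{sp}f\|_\infty)_{k\le p}$. I expect the Poisson part to be the main obstacle: because $D^{(1)}$ acts as a shift/finite-difference operator rather than a genuine derivative (Remark \ref{rem70}), the chain rule produces terms evaluated at shifted arguments $f(\cdot+D^{(1)}\theta^{q,p}_s)$, whose Malliavin-time H\"older continuity must be controlled and then propagated simultaneously through the Picard recursion and through the order-reducing truncation $\cc_{p-1}$. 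Bookkeeping of which derivatives survive this truncation after commuting with $D$ is the delicate technical point.
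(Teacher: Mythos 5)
Your overall architecture coincides with the paper's: an induction on the Picard index $q$ whose hypothesis is that $(Y^{q,p},Z^{q,p},U^{q,p})$ satisfy $\mathcal{H}_p$ pointwise in time, a reduction transferring this to $I_{q,p}$ via the chain/difference rule applied to $f(\theta^{q,p}_s)$, and a propagation step through \eqref{eq:Y_qp}--\eqref{eq:ZU_qp} using stability of $\mathcal{H}_p$ under sums, conditional expectations and the truncation $\cc_p$. There is, however, a genuine gap in the reduction step. Your telescoping of $D^{\mathbf{i}_r}_{t_1,\ldots,t_r}I_{q,p}-D^{\mathbf{i}_r}_{\ldots,s_i,\ldots}I_{q,p}$ records only the terms in which one Malliavin time of a factor $D\theta^{q,p}_u$ is moved; it omits the change of the \emph{domain} of the $du$-integral. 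Since $D^{\alpha_i}_{t_i}f(\theta^{q,p}_u)$ vanishes for $u<t_i$ by adaptedness, replacing $t_i$ by $s_i$ also shifts the lower integration limit and produces the boundary term $\int_{(t_i\wedge s_i)\vee\mathbf{ts}_{-i}}^{t_i\vee s_i\vee\mathbf{ts}_{-i}}(\cdots)\,du$ of the paper's decomposition \eqref{DIqp}. That term, of size $O(|t_i-s_i|)$, is precisely what caps the exponent at $\tfrac12\wedge\beta_\xi$. Your attribution of the $\tfrac12$ to the stochastic-integral representation of $Z^{q+1,p}$ and $U^{q+1,p}$ is not the mechanism at work: condition $\mathcal{H}^2_p$ measures H\"older continuity in the Malliavin time indices, not in the running time $t$, and the forward representations \eqref{eq:Y_qp}--\eqref{eq:ZU_qp} contain no stochastic integrals. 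Without the boundary term your decomposition is not exhaustive and the stated estimate does not follow.

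The second issue is that the step you defer --- stability of $\mathcal{H}_p$ under $\cc_p$ --- is the substantive part of the propagation and cannot be dispatched by the commutation rule of Lemma \ref{lem4} alone. One must bound $\e|\cc_{p-r}(D^{\alpha(1:i-1)}_{\mathbf{t}}\Delta^{\alpha_i}_iD^{\alpha(i+1:r)}_{\mathbf{s}}F)|^j$ for general $j$, and for this the paper expresses the truncated derivative through the kernels $g_n$, controls the sup-norm of the kernel increments via \eqref{kernel-Malliavin-rel} and $\mathcal{H}_p$, and invokes the moment formula of Proposition \ref{E-of-chaos-products}, which is the substitute for the missing hypercontractivity in the Poisson setting. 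You correctly identify this as the delicate point, but the proposal leaves it open, so the inductive step from $q$ to $q+1$ is not actually closed.
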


We refer to \ref{proof_lem50} for the proof of Lemma \ref{lem50}.

\begin{prop}\label{prop4} Assume that  $\xi$ satisfies $\mathcal{H}_{p}$ and $f\in
  C^{0,p,p,p}_b$. We recall
   $\mathcal{E}^{q,p,N}:=\norm{(Y^{q,p}-Y^{q,p,N},Z^{q,p}-Z^{q,p,N}_{-}, U^{q,p}-U^{q,p,N}_{-})}^2_{\lp^2}$.
  We get
  \begin{align}\label{eq8}
    \mathcal{E}^{q+1,p,N} \le C_2 T(T+1) L_f^2  \mathcal{E}^{q,p,N}
    +K_2(q,p)\left(\frac{T}{N} \right)^{1 \wedge 2\beta_\xi},
  \end{align}
  where $C_2$\! is a scalar and the constant $K_2(q,p)$\! depends
  on  $\ov{ K^{\xi}_p}, T,\|\xi\|_{p, 1}$\! and on $(\norm{\partial^k_{sp}
    f}_{\infty})_{1\le k \le p}$.  \\
 Since $\mathcal{E}^{0,p,N}=0$, we deduce from \eqref{eq8} that $
  \mathcal{E}^{q,p,N}\le A_2(q,p) \left(\frac{T}{N} \right)^{1 \wedge 2\beta_\xi}$, where $A_2(q,p):=
  K_2(q,p)T(T+1)e^T\frac{(C_2T(T+1)L_f^2)^q-1}{C_2T(T+1)L_f^2-1}.$ Then, $(Y^{p,q,N},Z^{p,q,N}_{-},U^{q,p,N}_{-})$
  converges to $(Y^{q,p},Z^{q,p},U^{q,p})$ in
  $\norm{(\cdot,\cdot, \cdot)}_{\lp^2}$ when $N$ tends to $\infty.$ 
\end{prop}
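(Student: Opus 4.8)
The plan is to follow the proof of Proposition \ref{prop1} almost line by line, the only structural change being that the chaos-truncation estimate of Lemma \ref{lem2} is replaced by the basis-truncation estimate of Lemma \ref{basis-truncation}. Write $\Delta Y^{q,p,N}_t := Y^{q,p,N}_t - Y^{q,p}_t$, and likewise $\Delta Z^{q,p,N},\Delta U^{q,p,N}$, and set $\Delta f^{q,p,N}_s := f(s,Y^{q,p,N}_s,Z^{q,p,N}_s,U^{q,p,N}_s) - f(s,Y^{q,p}_s,Z^{q,p}_s,U^{q,p}_s)$. Subtracting \eqref{eq:Y_qp} from \eqref{eq:Y_qpN} gives
\[
  \Delta Y^{q+1,p,N}_t = \e_t\!\left[\cc_p^N(F^{q,p,N}) - \cc_p(F^{q,p})\right] - \int_0^t \Delta f^{q,p,N}_s\,ds,
\]
and the key step is to split the bracket, using the linearity of $\cc_p^N$, as
\[
  \cc_p^N(F^{q,p,N}) - \cc_p(F^{q,p}) = \cc_p^N\!\left(\int_0^T \Delta f^{q,p,N}_s\,ds\right) + (\cc_p^N - \cc_p)(F^{q,p}).
\]
The first summand is the recursive term (it produces $\mathcal{E}^{q,p,N}$ through the Lipschitz property of $f$); the second is the pure basis-truncation error, to be handled by Lemma \ref{basis-truncation}.

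Next I would bound the two components. For $Y$, applying Doob's $L^2$ maximal inequality to the martingales $t\mapsto\e_t[\,\cdot\,]$ and estimating $\sup_t|\int_0^t\Delta f^{q,p,N}_s\,ds|\le\int_0^T|\Delta f^{q,p,N}_s|\,ds$ gives
\[
  \norm{\sup_{0\le t\le T}|\Delta Y^{q+1,p,N}_t|}_2 \le 2\norm{(\cc_p^N-\cc_p)(F^{q,p})}_2 + 2\norm{\cc_p^N\!\left(\int_0^T\Delta f^{q,p,N}_s\,ds\right)}_2 + \norm{\int_0^T|\Delta f^{q,p,N}_s|\,ds}_2,
\]
where the last two terms combine, via the contraction property $\e|\cc_p^N(G)|^2\le\e|G|^2$ of Lemma \ref{lem9} and the Lipschitz bound, into $3L_f\int_0^T\norm{|\Delta Y^{q,p,N}_s|+|\Delta Z^{q,p,N}_s|+|\Delta U^{q,p,N}_s|}_2\,ds$, mirroring \eqref{eq50}. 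For $(Z,U)$ I argue as for \eqref{eq60}: the It\^o isometry yields
\[
  \int_0^T\norm{\Delta Z^{q+1,p,N}_{s-}}_2^2\,ds + \kappa\int_0^T\norm{\Delta U^{q+1,p,N}_{s-}}_2^2\,ds = \norm{\int_0^T\Delta Z^{q+1,p,N}_{s-}\,dB_s + \int_0^T\Delta U^{q+1,p,N}_{s-}\,d\tilde N_s}_2^2,
\]
and applying the Clark--Ocone formula to $\cc_p^N(F^{q,p,N})$ and $\cc_p(F^{q,p})$ — where the predictable projections coincide with the left-continuous modifications, as noted after \eqref{eq:ZU_qpN}, which is what identifies the integrands with $Z^{q+1,p,N}_{-},U^{q+1,p,N}_-$ — the stochastic integral equals the centred difference of $\cc_p^N(F^{q,p,N})-\cc_p(F^{q,p})$. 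Rearranging via $Y^{q+1,p,N}_T,Y^{q+1,p}_T,Y^{q+1,p,N}_0,Y^{q+1,p}_0$ into $\Delta Y^{q+1,p,N}_T-\Delta Y^{q+1,p,N}_0-\int_0^T\Delta f^{q,p,N}_s\,ds$ bounds the left-hand side by $4\norm{\sup_t|\Delta Y^{q+1,p,N}_t|}_2^2$ plus a term of order $L_f^2\big(\int_0^T\norm{|\Delta Y^{q,p,N}_s|+|\Delta Z^{q,p,N}_s|+|\Delta U^{q,p,N}_s|}_2\,ds\big)^2$.

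Combining the two estimates exactly as in the $5\times{\eqref{eq50}}^2+\eqref{eq60}$ computation of Proposition \ref{prop1}, and bounding $\big(\int_0^T\norm{\Delta Y^{q,p,N}_s}_2+\norm{\Delta Z^{q,p,N}_s}_2+\norm{\Delta U^{q,p,N}_s}_2\,ds\big)^2\le\frac{3(1+\kappa)}{\kappa}T(T+1)\mathcal{E}^{q,p,N}$, collects the recursive term $C_2T(T+1)L_f^2\mathcal{E}^{q,p,N}$. For the leftover term I write $F^{q,p}=\xi+I_{q,p}$ with $I_{q,p}=\int_0^T f(s,Y^{q,p}_s,Z^{q,p}_s,U^{q,p}_s)\,ds$, split $(\cc_p^N-\cc_p)(F^{q,p})$ accordingly, and apply Lemma \ref{basis-truncation} to each piece: $\xi$ satisfies \eqref{eq31} because it obeys $\mathcal{H}_p$, while $I_{q,p}$ satisfies \eqref{eq31} with exponent $\beta_{I_{q,p}}=\tfrac12\wedge\beta_\xi$ and constant controlled by $\overline{K^\xi_p}$, $\norm{\xi}_{p,1}$, $T$ and $(\norm{\partial^k_{sp}f}_\infty)_{k\le p}$, by Lemma \ref{lem50}. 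Since $2\beta_{I_{q,p}}=1\wedge2\beta_\xi$, this gives the additive term $K_2(q,p)(T/N)^{1\wedge2\beta_\xi}$ of \eqref{eq8}; and because $\mathcal{E}^{0,p,N}=0$, iterating \eqref{eq8} and summing the geometric series produces $\mathcal{E}^{q,p,N}\le A_2(q,p)(T/N)^{1\wedge2\beta_\xi}$ with the stated $A_2(q,p)$, hence convergence as $N\to\infty$.

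I expect the main obstacle to be the correct invocation of Lemma \ref{basis-truncation}: one must verify that $F^{q,p}$ inherits, through $I_{q,p}$, the fractional smoothness \eqref{eq31} with the exponent $\tfrac12\wedge\beta_\xi$ that governs the final rate, and that the associated constant stays controlled by the prescribed data — this is where Lemma \ref{lem50}, and behind it the $\mathcal{S}^{m,\infty}$-regularity of the chaos--Picard iterates from Lemma \ref{lem5b}, does the real work. A secondary delicate point is the Clark--Ocone step for the truncated operator $\cc_p^N$, where identifying the predictable projections with the left-continuous modifications of $Z^{q+1,p,N},U^{q+1,p,N}$ is exactly what matches the $Z_-,U_-$ norms appearing in $\mathcal{E}^{q+1,p,N}$.
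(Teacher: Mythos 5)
Your proposal is correct and follows essentially the same route as the paper: the same telescoping decomposition (your $\cc_p^N(\int_0^T\Delta f^{q,p,N}_s\,ds)+(\cc_p^N-\cc_p)(F^{q,p})$ is exactly the paper's insertion of $\pm\,\cc_p^N\big(\int_0^T f(s,Y^{q,p}_s,Z^{q,p}_s,U^{q,p}_s)\,ds\big)$, merely splitting off the $\xi$-term at the end rather than at the start), the same Doob/It\^o-isometry/Clark--Ocone estimates mirroring \eqref{eq50}--\eqref{eq60}, and the same invocation of Lemma \ref{basis-truncation} via Remark \ref{rem12} for $\xi$ and Lemma \ref{lem50} for $I_{q,p}$ to produce the $(T/N)^{1\wedge 2\beta_\xi}$ term.
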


\begin{proof}[Proof of Proposition \ref{prop4}]  In the following, we denote $$\Delta Y^{q,p,N}_t :=
  Y^{q,p,N}_t-Y^{q,p}_t,$$ $$\Delta Z^{q,p,N}_t :=
 Z^{q,p,N}_{t^-}-Z^{q,p}_t, \quad \Delta U^{q,p,N}_t :=
  U^{q,p,N}_{t^-}-U^{q,p}_t,$$ and $$\Delta f^{q,p,N}_t:=f(t,Y^{q,p,N}_t,Z^{q,p,N}_{t^-},U^{q,p,N}_{t^-}) 
  -f(t,Y^{q,p}_t,Z^{q,p}_t, U^{q,p}_t).$$
  Firstly, we deal with $\norm{\sup_{0\le t \le T}
  |\Delta Y^{q+1,p,N}_t|}_2$.
  From \eqref{eq:Y_qp} and \eqref{eq:Y_qpN} we get
  \begin{align*}
    \Delta Y^{q+1,p,N}_t=\e_t[\cc_p^N(F^{q,p,N})-\cc_p(F^{q,p})]+\int_0^t
    \Delta f^{q,p,N}_s ds.
  \end{align*} By using the second property of Lemma \ref{lem9}, by following
  the same steps as in the proof of Proposition \ref{prop1} and by introducing
  $\pm \cc_p^N (\int_0^T f(s,Y^{q,p}_s,Z^{q,p}_s,U^{q,p}_s) ds)$, one gets
   \begin{align*}
     \norm{\sup_{0\le t \le T} | \Delta Y^{q+1,p,N}_t|}_2&\le 
    2 \norm{\cc_p^N(\xi)-\cc_p(\xi)}_2 +  2 \left \| \cc_p^N\left(\int_0^T \Delta f^{q,p}_s ds\right ) \right\|_2\\
    &+  2 \left \|(\cc_p^N-\cc_p)\left (\int_0^T(f(s,Y^{q,p}_s,Z^{q,p}_s,U^{q,p}_s) ds  \right ) \right\|_2 \\
   &+  L_f \int_0^T
    \norm{ |\Delta Y^{q,p,N}_s|+|\Delta Z^{q,p,N}_s|+|\Delta U^{q,p,N}_s| }_2 ds.\notag
  \end{align*}
  It remains to apply the first property of Lemma \ref{lem9} to get
 \begin{align} \label{eq10}
     \norm{\sup_{0\le t \le T} | \Delta Y^{q+1,p,N}_t|}_2 &\le 
    2 \norm{\cc_p^N(\xi)-\cc_p(\xi)}_2 +    2 \left \|(\cc_p^N-\cc_p)\left (\int_0^T(f(s,Y^{q,p}_s,Z^{q,p}_s,U^{q,p}_s) ds  \right ) \right\|_2  \notag \\
  & +  3L_f \int_0^T
    \norm{ |\Delta Y^{q,p,N}_s|+|\Delta Z^{q,p,N}_s|+|\Delta U^{q,p,N}_s| }_2 ds.
  \end{align}

    Let us now upper bound $ \int_0^T \norm{\Delta Z^{q+1,p,N}_s}_2^2 ds+  \kappa \int_0^T\norm{ \Delta U^{q+1,p,N}_s}^2_2 ds.$ 

Following the same steps as in the proof of Proposition \ref{prop1},
one gets
\begin{align}\label{eq9}
& \int_0^T \norm{\Delta Z^{q+1,p,N}_s}_2^2 ds+ \kappa \int_0^T\norm{ \Delta U^{q+1,p,N}_s}^2_2 ds \nonumber \\
&\le 4 \norm{\sup_{0\le t \le T} | \Delta Y^{q+1,p,N}_t| }_2^2 
+2L_f^2 \left (\int_0^T
     \norm{\Delta Y^{q,p,N}_s}_2+ \norm{\Delta Z^{q,p,N}_s}_2+ \norm{\Delta U^{q,p,N}_s}_2 ds \right )^2.
 \end{align} 

 Adding $5\times\eqref{eq10}^2$ and \eqref{eq9} gives
\begin{align*}
    \mathcal{E}^{q+1,p,N} &\le 60\norm{(\cc_p^N - \cc_p)(\xi)  }_2^2 
   +60\left \|(\cc_p^N-\cc_p)\left (\int_0^T(f(s,Y^{q,p}_s,Z^{q,p}_s,U^{q,p}_s) ds  \right ) \right\|^2_2 \nonumber \\
         &+\frac{411(1+\kappa)}{\kappa}  T(T+1)L_f^2  \mathcal{E}^{q,p,N}.
      \end{align*}

Since $\xi$ and $I_{q,p}$ satisfy \eqref{eq31} (see Remark \ref{rem12} and Lemma \ref{lem50}), Lemma
  \ref{basis-truncation} gives
  \begin{align*}
    \mathcal{E}^{q+1,p,N} \le  60\left(\frac{T}{N}\right)^{2 \beta_\xi 
      \wedge 1} T (T+1) e^T \left((\ov{K^{\xi}_p})^2+(\ov{K^{I_{q,p}}_p})^2
    \right)+  \frac{411(1+\kappa)}{\kappa}T(T+1)L_f^2  \mathcal{E}^{q,p,N},
  \end{align*}
  and \eqref{eq8} follows. 
\end{proof}

\subsection{Error due to the Monte-Carlo approximation}\label{sect:MC}

We are now interested in bounding the error between $(Y^{q,p,N},Z^{q,p,N}_{
  -},U^{q,p,N}_{ -})$
defined by \eqref{eq:Y_qpN}-\eqref{eq:ZU_qpN} and $(Y^{q,p,N,M},Z^{q,p,N,M}_{-},U^{q,p,N,M}_{-})$ defined by
\eqref{eq:Y_qpNM}-\eqref{eq:ZU_qpNM}.  $\cc_p^{N,M}$ is defined by \eqref{d_hat} and
\eqref{chaos_dec_MC}. In this Section, we assume that the coefficients
$\hat{d^{\bf n}_k}$ are independent of the vector $(G_1,\cdots,G_N)$, which
corresponds to the second approach proposed in Remark \ref{rem2}.

Before giving an upper bound for the error, we recall the following Lemma,
which measures the error between
$\cc_p^N$ and $\cc_p^{N,M}$ for a r.v. satisfying $\mathcal{H}^3_{p,N}$ (see
Hypothesis \ref{hypo4}).

\begin{lemme}\label{lem7}
  Let $F$ be a r.v. satisfying Hypothesis $\mathcal{H}^3_{p,N}$. We have
  \begin{align*}
    \e(|(\cc_p^N-\cc_p^{N,M})(F)|^2) = \frac{1}{M}V_{p,N}(F).
  \end{align*}
  Moreover, we have $ \e(|\cc_p^{N,M}(F)|^2) \le \e(|F|^2)+\frac{1}{M}V_{p,N}(F)$.
\end{lemme}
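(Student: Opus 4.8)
The plan is to exploit two structural facts about the expansion: the products $\phi_n:=\prod_{i=1}^N K_{n^B_i}(G_i)C_{n^P_i}(Q_i,\kappa h)$ form an \emph{orthogonal} family in $\lp^2(\m F_T)$, and---crucially---that in the second sampling scheme of Remark \ref{rem2} (the one under which the proposition is stated) the Monte Carlo coefficients $\widehat{d_0}$ and $\widehat{d^n_k}$ are computed from an independent sample, hence are independent of the vector $(G_1,\ldots,G_N,Q_1,\ldots,Q_N)$ entering $\phi_n$. First I would write the difference as a single expansion over this basis,
\[
(\cc_p^N-\cc_p^{N,M})(F)=(d_0-\widehat{d_0})+\sum_{k=1}^p\sum_{|n|=k}(d^n_k-\widehat{d^n_k})\,\phi_n ,
\]
and record the three orthogonality relations $\e[\phi_n]=0$ for $|n|\ge 1$, $\e[\phi_n\phi_{n'}]=0$ for $n\ne n'$, and $\e[\phi_n^2]=\frac{(n^P)!(\kappa h)^{|n^P|}}{(n^B)!}$, the last of which is exactly the identity noted just after Remark \ref{rem9}.

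Next I would square and take expectations. Since each coefficient deviation is independent of every $\phi_n$, each cross term factors into a product of expectations, and the orthogonality relations annihilate all off-diagonal terms as well as every term involving the constant. Using unbiasedness, $\e\widehat{d_0}=d_0$ and $\e\widehat{d^n_k}=d^n_k$, so that $\e[(d^n_k-\widehat{d^n_k})^2]=\V(\widehat{d^n_k})$, what survives is the diagonal sum
\[
\e\big[|(\cc_p^N-\cc_p^{N,M})(F)|^2\big]=\V(\widehat{d_0})+\sum_{k=1}^p\sum_{|n|=k}\V(\widehat{d^n_k})\,\frac{(n^P)!(\kappa h)^{|n^P|}}{(n^B)!}.
\]
Because $\widehat{d_0}$ and $\widehat{d^n_k}$ are empirical means over $M$ i.i.d.\ copies, $\V(\widehat{d_0})=\frac{1}{M}\V(F)$ and $\V(\widehat{d^n_k})=\frac{1}{M}\big(\frac{(n^B)!}{(n^P)!(\kappa h)^{|n^P|}}\big)^2\V\big(F\phi_n\big)$. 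Plugging these in, one factor of the normalizing constant $\frac{(n^B)!}{(n^P)!(\kappa h)^{|n^P|}}$ cancels against $\e[\phi_n^2]=\frac{(n^P)!(\kappa h)^{|n^P|}}{(n^B)!}$, leaving precisely $\frac{1}{M}\frac{(n^B)!}{(n^P)!(\kappa h)^{|n^P|}}\V(F\phi_n)$ in each summand; assembling these with $\frac{1}{M}\V(F)$ reproduces $\frac{1}{M}V_{p,N}(F)$ by the definition in Hypothesis \ref{hypo4}, which is the first claim.

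For the second inequality I would run the identical orthogonality/independence expansion directly on $\cc_p^{N,M}(F)=\widehat{d_0}+\sum_{k,n}\widehat{d^n_k}\phi_n$, now splitting $\e[\widehat{d_0}^2]=d_0^2+\V(\widehat{d_0})$ and $\e[(\widehat{d^n_k})^2]=(d^n_k)^2+\V(\widehat{d^n_k})$. The mean-square terms collect into exactly $\e[|\cc_p^N(F)|^2]$, while the variance terms reassemble $\frac{1}{M}V_{p,N}(F)$ as above, giving the exact identity $\e[|\cc_p^{N,M}(F)|^2]=\e[|\cc_p^N(F)|^2]+\frac{1}{M}V_{p,N}(F)$. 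The inequality then follows at once from the first bullet of Lemma \ref{lem9}, which yields $\e[|\cc_p^N(F)|^2]\le\e[|F|^2]$.

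The one point demanding care---and the reason the first statement is an exact equality rather than a mere bound---is the independence of the Monte Carlo coefficients from the evaluation variables $(G_i,Q_i)$, which is precisely why the convergence analysis adopts the second sampling scheme of Remark \ref{rem2}; once this independence is in place the remainder is bookkeeping to cancel the normalizing constants. I do not expect any genuine obstacle beyond keeping these constants straight.
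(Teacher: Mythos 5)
Your proposal is correct and follows essentially the same route as the paper's proof: the orthogonal expansion in the products $\prod_i K_{n^B_i}(G_i)C_{n^P_i}(Q_i,\kappa h)$, the independence of the Monte Carlo coefficients from $(G_i,Q_i)$ under the second sampling scheme, unbiasedness turning the squared deviations into variances of empirical means, and Lemma \ref{lem9} for the final bound. The only cosmetic difference is in the second claim, where the paper writes $\cc_p^{N,M}(F)=(\cc_p^{N,M}-\cc_p^{N})(F)+\cc_p^N(F)$ and kills the cross term, while you expand coefficientwise --- the two computations are the same orthogonality argument.
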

We refer to Section \ref{sec:proof_lem7} for the proof of the Lemma.

\begin{prop}\label{prop5} Let $\xi$ satisfy Hypothesis $\mathcal{H}^3_{p,N}$\! and $f$ be a bounded function. Let  
$\mathcal{E}^{q,p,N,M}\!:=\norm{(Y^{q,p,N}-Y^{q,p,N,M},Z^{q,p,N}-Z^{q,p,N,M},U^{q,p,N}-U^{q,p,N,M})}^2_{\lp^2}$.
  We get  
  \begin{align*}
    \mathcal{E}^{q+1,p,N,M} \le C_3 T(T+1) L_f^2  \mathcal{E}^{q,p,N,M}
    +\frac{K_3(q,p,N)}{M},
  \end{align*}
  where $C_3$ is a scalar and the constant $K_3(q,p,N):=  C_4 \left(V_{p,N}(\xi)+T^2
    \|f\|^2_{\infty} \sum_{k=0}^p \binom{2N}{k} \right)$ for some $C_4 >0$.\\
  Since $\mathcal{E}^{0,p,N,M}=0$, we deduce from the previous inequality that $
  \mathcal{E}^{q,p,N,M}\le \frac{A_3(q,p,N)}{M} $, where $A_3(q,p,N):=
  K_3(q,p,N)\frac{(C_3T(T+1)L_f^2)^q-1}{C_3T(T+1)L_f^2-1}$. Then, $(Y^{p,q,N,M},Z^{p,q,N,M},U^{q,p,N,M})$
  converges to $(Y^{q,p,N},Z^{q,p,N},U^{q,p,N})$ in
  $\norm{(\cdot,\cdot,\cdot)}_{\lp^2}$ when $M$ tends to $\infty$ .
\end{prop}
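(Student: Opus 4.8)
The plan is to follow the proofs of Propositions \ref{prop1} and \ref{prop4} verbatim, replacing the chaos-truncation error by the Monte-Carlo error, which is controlled exclusively through Lemma \ref{lem7}, and to arrive at the affine recursion
\[
  \mathcal{E}^{q+1,p,N,M} \le C_3 T(T+1) L_f^2 \, \mathcal{E}^{q,p,N,M} + \frac{K_3(q,p,N)}{M},
\]
which, since $\mathcal{E}^{0,p,N,M}=0$, unrolls into the stated geometric bound $\mathcal{E}^{q,p,N,M}\le A_3(q,p,N)/M$. First I set $\Delta Y^{q,p,N,M}_t := Y^{q,p,N,M}_t - Y^{q,p,N}_t$, and likewise $\Delta Z^{q,p,N,M}$, $\Delta U^{q,p,N,M}$, $\Delta f^{q,p,N,M}$, and subtract \eqref{eq:Y_qpN} from \eqref{eq:Y_qpNM}. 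Using that $\cc_p^{N,M}$ is linear in its argument and that $\xi$ cancels in $F^{q,p,N,M}-F^{q,p,N}=\int_0^T \Delta f^{q,p,N,M}_s\, ds=:G$, I would write
\begin{align*}
  \Delta Y^{q+1,p,N,M}_t &= \e_t\big[\cc_p^{N,M}(G)\big] + \e_t\big[(\cc_p^{N,M}-\cc_p^N)(F^{q,p,N})\big] - \int_0^t \Delta f^{q,p,N,M}_s\, ds.
\end{align*}
Under the second approach of Remark \ref{rem2} the coefficients $\widehat{d}$ are independent of the evaluation variables, so $\e_t(\cc_p^{N,M}\,\cdot)=\e(\cc_p^{N,M}\,\cdot\,|\,\cf_t)$ and Doob's inequality may be applied to the two conditional expectations.

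The two new terms are handled by Lemma \ref{lem7}. For the first, $\norm{\cc_p^{N,M}(G)}_2\le \norm{G}_2+\big(\tfrac1M V_{p,N}(G)\big)^{1/2}$, where $\norm{G}_2\le L_f\int_0^T\norm{|\Delta Y^{q,p,N,M}_s|+|\Delta Z^{q,p,N,M}_s|+|\Delta U^{q,p,N,M}_s|}_2\,ds$ by Minkowski and the Lipschitz property, and therefore merges with the explicit Lipschitz term into the $\mathcal{E}^{q,p,N,M}$ contribution. The decisive point — and the place where boundedness of $f$ (rather than the mere Lipschitz assumption of Propositions \ref{prop1} and \ref{prop4}) is used — is that $|G|\le 2T\|f\|_{\infty}$, so Remark \ref{rem9} gives $V_{p,N}(G)\le 4T^2\|f\|^2_{\infty}\sum_{k=0}^p\binom{2N}{k}$. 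For the second term, Lemma \ref{lem7} yields the exact identity $\e|(\cc_p^{N,M}-\cc_p^N)(F^{q,p,N})|^2=\tfrac1M V_{p,N}(F^{q,p,N})$; applying $\V(X+Y)\le 2\V(X)+2\V(Y)$ to each summand of $V_{p,N}$ gives $V_{p,N}(F^{q,p,N})\le 2V_{p,N}(\xi)+2V_{p,N}(\int_0^T f\,ds)\le 2V_{p,N}(\xi)+2T^2\|f\|^2_{\infty}\sum_{k=0}^p\binom{2N}{k}$, again by Remark \ref{rem9}, where $F^{q,p,N}$ satisfies $\mathcal{H}^3_{p,N}$ since $\xi$ does and $\int_0^T f\,ds$ is bounded. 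Together these reproduce exactly the $K_3(q,p,N)=C_4\big(V_{p,N}(\xi)+T^2\|f\|^2_{\infty}\sum_{k=0}^p\binom{2N}{k}\big)$ of the statement.

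For the $(Z,U)$ part I would proceed as in \eqref{eq60} and \eqref{eq9}: the Clark-Ocone formula applied to $\cc_p^{N,M}(F^{q,p,N,M})-\cc_p^N(F^{q,p,N})$ together with the Itô isometry gives
\begin{align*}
  \int_0^T\norm{\Delta Z^{q+1,p,N,M}_s}_2^2\,ds+\kappa\int_0^T\norm{\Delta U^{q+1,p,N,M}_s}_2^2\,ds &\le 4\norm{\sup_{0\le t\le T}|\Delta Y^{q+1,p,N,M}_t|}_2^2 \\
  &\quad + 2L_f^2\Big(\int_0^T \norm{\Delta Y^{q,p,N,M}_s}_2+\norm{\Delta Z^{q,p,N,M}_s}_2+\norm{\Delta U^{q,p,N,M}_s}_2\,ds\Big)^2.
\end{align*}
Combining $5$ times the squared bound on $\norm{\sup_{0\le t\le T}|\Delta Y^{q+1,p,N,M}_t|}_2$ with this inequality, and using $\big(\int_0^T \norm{\Delta Y}_2+\norm{\Delta Z}_2+\norm{\Delta U}_2\,ds\big)^2\le \tfrac{3(1+\kappa)}{\kappa}T(T+1)\mathcal{E}^{q,p,N,M}$, yields the affine recursion with the stated $C_3$ and $K_3$, and the unrolling is immediate.

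I expect the delicate point to be the residual term $\tfrac1M V_{p,N}(G)$ produced when Lemma \ref{lem7} is applied to $\cc_p^{N,M}(G)$: unlike the operators $\cc_p$ and $\cc_p^N$ of the earlier propositions, $\cc_p^{N,M}$ is \emph{not} a genuine $L^2$-contraction because its coefficients are random, so this term cannot be absorbed into the recursion and must instead be routed into $K_3/M$. This is exactly where the hypothesis that $f$ is bounded is indispensable, since it makes $G$ bounded and lets Remark \ref{rem9} bound $V_{p,N}(G)$ uniformly, independently of the unknown iterates. A secondary technical matter is to invoke the independence built into the second approach of Remark \ref{rem2} so that the per-step errors of Lemma \ref{lem7} may be taken in unconditional expectation and the recursion closes.
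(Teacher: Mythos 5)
Your proposal is correct and follows exactly the route the paper intends: the paper does not write this proof out but refers to \cite[Proposition 4.17]{BL_14} "with the jump part treated as in \eqref{eq9}", and your argument is precisely that adaptation — the decomposition into $\cc_p^{N,M}(G)+(\cc_p^{N,M}-\cc_p^N)(F^{q,p,N})$, Lemma \ref{lem7} plus Remark \ref{rem9} to route the variance terms into $K_3/M$, and the Clark--Ocone/It\^o-isometry step for $(Z,U)$. You also correctly identify why the boundedness of $f$ (absent from Propositions \ref{prop1} and \ref{prop4}) is needed here, namely to bound $V_{p,N}$ of the driver integrals uniformly in the random iterates.
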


The proof of Proposition \ref{prop5} is the same as the proof of
\cite[Proposition 4.17]{BL_14}, except that we consider jumps. The jump part
is treated as in \eqref{eq9}.

 \section{Implementation}\label{sect:implementation}
\subsection{Pseudo-code of the Algorithm}\label{section:algo}
In this section, we describe in
detail the algorithm. We aim at computing $M$
trajectories of an approximation of $(Y,Z,U)$ on the grid
$\mathcal{T}=\{\overline{t}_i=i\frac{T}{N}, i=0,\cdots,N \}$. Starting from
$(Y^{0,p,N,M},Z^{0,p,N,M},U^{0,p,N,M})=(0,0,0)$, \eqref{eq:Y_qpNM}-\eqref{eq:ZU_qpNM} enable to get
$(Y^{q,p,N,M},Z^{q,p,N,M},U^{q,p,N,M})$ for each Picard's iteration $q$ on
$\mathcal{T}$. In practice, we discretize the integral $ \int_0^t
  f\left({\theta}^{q,p,N,M}_s \right) ds$ which leads to approximated values of
$(Y^{q,p,N,M}, $ $Z^{q,p,N,M},U^{q,p,N,M})$ computed on a grid.
Let us introduce
$(\overline{Y}^{q+1,p,N,M}_{\overline{t}_i},\overline{Z}^{q+1,p,N,M}_{\overline{t}_i},\overline{U}^{q+1,p,N,M}_{\overline{t}_i})_{1\le i \le
  N}$, defined by $$(\overline{Y}^{0,p,N,M},\overline{Z}^{0,p,N,M},\overline{U}^{0,p,N,M})=(0,0,0)$$ and
for all $q \ge 0$
     \begin{align}\label{eq:YZ_qpNM_barre}
    &\overline{Y}^{q+1,p,N,M}_{\overline{t}_i}= \e_{\overline{t}_i}(\cc^{N,M}_p (\overline{F}^{q,p,N,M}))-h \sum_{j=1}^{i}
    f\left(\overline{t}_j,\overline{Y}^{q,p,N,M}_{\overline{t}_j},\overline{Z}^{q,p,N,M}_{\overline{t}_j},\overline{U}^{q,p,N,M}_{\overline{t}_j}\right),\notag\\
    &\overline{Z}^{q+1,p,N,M}_{\overline{t}_i}=D^{(0)}_{\overline{t}_i}(\e_{\overline{t}_i}(\cc^{N,M}_p(\overline{F}^{q,p,N,M}))),\notag\\
    &\overline{U}^{q+1,p,N,M}_{\overline{t}_i}=D^{(1)}_{\overline{t}_i}(\e_{\overline{t}_i}(\cc^{N,M}_p(\overline{F}^{q,p,N,M}))),
  \end{align}
where  $\overline{F}^{q,p,N,M}:= \xi + h\sum_{i=1}^{N}
f(\overline{t}_i,\overline{Y}^{q,p,N,M}_{\overline{t}_i},\overline{Z}^{q,p,N,M}_{\overline{t}_i},\overline{U}^{q,p,N,M}_{\overline{t}_i})$.

 \begin{rem}
  Instead of studying the error between $(Y,Z,U)$ and $\theta^{q,p,N,M}:=(Y^{q,p,N,M}, $ $Z^{q,p,N,M},U^{q,p,N,M})$
  we could have studied the error between $(Y,Z,U)$ and
  $\overline{\theta}^{q,p,N,M}:=(\overline{Y}^{q,p,N,M},$ $\overline{Z}^{q,p,N,M},\overline{U}^{q,p,N,M})$. The
  main difference between $\theta^{q,p,N,M}$ and
  $\overline{\theta}^{q,p,N,M}$ is that we consider a discrete sum in the
  implemented scheme. In that case, the scheme of the proof is the same, and in order to get the same convergence rate we
  just need to add another assumption: $f$ has to be Hölder-$(\beta_\xi \wedge \frac{1}{2})$ in time.
\end{rem}

Here are the notations we use in the algorithm.

 \begin{itemize}
 \item $q$: index of Picard's iteration
 \item $K_{it}$: number of Picard's iterations
 \item $M$: number of Monte--Carlo samples
 \item $N$: number of time steps used for the discretization of $Y$ and $Z$
 \item $p$: order of the chaos decomposition 
 \item $\bY^q \in \mathcal{M}_{N+1,M}(\rset)$ represents $M$ paths of $\overline{Y}^{q,p,N,M}$
   computed on the grid $\mathcal{T}$.
 \item  $\bZ^q \in
   \mathcal{M}_{N+1,M}(\rset)$ (resp. $\bU^q \in
   \mathcal{M}_{N+1,M}(\rset)$) represents $M$ paths of
   $\overline{Z}^{q,p,N,M}$ (resp. $\overline{U}^{q,p,N,M}$) computed on
   the grid $\mathcal{T}$.
\end{itemize}

Since $\xi \in \lp^2(\m F_T)$, $\xi $ can be written as a measurable function
of $\{B_t,N_t\}_{t \le T}$. Then, one gets one sample of $\xi$ from one sample of
$((G_1,Q_1),\cdots,(G_N,Q_N))$ (where $G_i$ represents
$\frac{B_{\overline{t}_i}-B_{\overline{t}_{i-1}}}{\sqrt{h}}$ and $Q_i$
represents $N_{\overline{t}_i}-N_{\overline{t}_{i-1}}$).
 
\begin{algorithm}[H]
  \caption{Iterative algorithm}\label{algodetail}
  \begin{algorithmic}[1] \State Pick at random $N\times M$ values of standard
    Gaussian r.v., stored in $\bG$, and $N\times M$ values of Poisson r.v. of
    parameter $\kappa h$ stored in $\bQ$.  \State Using $\bG$
    and $\bQ$, compute $\{\xi^m\}_{0 \le m \le M-1}$.  \State $\bY^0 \equiv 0$,
    $\bZ^0 \equiv 0$, $\bU^0 \equiv 0$.  \For{$q=0:K_{it}-1$} \label{Kit} \For
    {$m=0:M-1$}\label{loopF} \State Compute
    $(F^q)^m=\xi^m+h\sum_{i=1}^{N}f(\overline{t}_i,(\bY^q)_{i,m},(\bZ^q)_{i,m},(\bU^q)_{i,m})$
    \EndFor
    \State Compute the vector $\bf{d}=(\widehat{d_0},\{\widehat{d^n_k}\}_{1\le
      k \le p, |n|=k})$ of the chaos decomposition of $F^q$\label{coef}
    \State
    $\widehat{d_0}:=\frac{1}{M}\sum_{m=0}^{M-1}
    (F^q)^m,\;\;\widehat{d^{\bf n}_k}=\frac{{\bf n}^B!}{{\bf n}^P!(\kappa
      h)^{|{\bf n}^P|}M}\sum_{m=0}^{M-1} (F^q)^m \prod_{i=1}^N
    K_{n_i^B}(G_i^m)C_{n_i^P}(Q_i^m,\kappa h)$
    \For {$j=1:N$}\label{lineN}
    \For {$m=0:M-1$}\label{lineM}
    \State Compute 
    $(\e_{\overline{t}_j}(\cc_p^{N,M} F^q))^m$,
    $(D^{(0)}_{\overline{t}_j}(\e_{\overline{t}_j}(\cc_p^{N,M} F^q)))^m$,$(D^{(1)}_{\overline{t}_j}(\e_{\overline{t}_j}(\cc_p^{N,M} F^q)))^m$\label{loopEt}
    \State $(\bY^{q+1})_{j,m}=(\e_{\overline{t}_j}(\cc_p^{N,M}
    F^q))^m-h\sum_{i=1}^{j}f(\overline{t}_i,(\bY^q)_{i,m},(\bZ^q)_{i,m},(\bU^q)_{i,m})$\label{loopYZ}
    \State
    $(\bZ^{q+1})_{j,m}=(D^{(0)}_{\overline{t}_j}(\e_{\overline{t}_j}(\cc_p^{N,M}
    F^q)))^m$
    \State
    $(\bU^{q+1})_{j,m}=(D^{(1)}_{\overline{t}_j}(\e_{\overline{t}_j}(\cc_p^{N,M} F^q)))^m$
    \EndFor
    \EndFor
    \EndFor
    \State Return $(\bY^{K_{it}})_{0,:}=\widehat{d}_0$,
    $(\bZ^{K_{it}})_{0,:}=\frac{1}{\sqrt{h}}\widehat{d}^{\mathbf{e}_1,\mathbf{0_N}}_1$ and $(\bU^{K_{it}})_{0,:}=\widehat{d}^{\mathbf{0_N},\mathbf{e}_1}_1$ 
\end{algorithmic}
\end{algorithm}

Let us now deal with the complexity of the algorithm :\\
For each $q$:
\begin{itemize}
  \item the computation of the vector $F^q$ (loop line \ref{loopF}) requires
    $O(M\times N)$ computations,
  \item the computation of the vector $\mathbf{d}$ (line \ref{coef}) requires $O(M\times p \times
    N^p)$ computations, and the computation of each coefficient  requires
    $O(M\times p)$
    computations,
    \item for each $N$ and $M$ (lines \ref{lineN}-\ref{lineM})
    \begin{itemize}
    \item the computation of $(\e_{\overline{t}_j}(\cc_p^{N,M} F^q))^m$, of
      $(D^{(0)}_{\overline{t}_j}(\e_{\overline{t}_j}(\cc_p^{N,M} F^q)))^m$ and
      of \\ $(D^{(1)}_{\overline{t}_j}(\e_{\overline{t}_j}(\cc_p^{N,M} F^q)))^m$ (line \ref{loopEt}) requires
      $O( p\times N^p)$ computations
    \item the computation of $(\bY^{q+1})_{j,m}$ (loop line \ref{loopYZ})
      requires $O(N)$ computations, the computation of
      $((\bZ^{q+1})^l_{j,m})$ and $((\bU^{q+1})^l_{j,m})$ requires $O(1)$ computations.
    \end{itemize}
  \end{itemize}
  The complexity of the algorithm is then $O(K_{it}\times M
  \times p\times N^{p+1})$.
 
\begin{rem}Given the complexity $C_0$ of the algorithm, we can choose the parameters
  $p,q,N$ and $M$ such that they minimize the error $ \frac{A_0}{2^q} + \frac{A_1(q,p)}{(p+1)!}+A_2(q,p) \left(\frac{T}{N}
    \right)^{a}+\frac{A_3(q,p,N)}{M}$, where $a:=2\beta_{\xi} \wedge 1$. This boils down to solving the
    following constrained minimization problem
    \begin{align*}
      \min_{q,p,N,M \mbox{ s.t. }qpMN^{p+1}=C_0} \left(  \frac{1}{2^q} +
        \frac{C^q}{(p+1)!}+\frac{C^q}{N^a}+\frac{C^q N^p}{M}\right).
    \end{align*}
    The Karush-Kuhn-Tucker theorem gives $M\sim\frac{2p}{a}(p+1)^{(p+\frac{3}{2})(\frac{p}{a}+1)}$, $N\sim (p+1)^\frac{p+\frac{3}{2}}{a}$, $q\sim \frac{1}{\ln(2C)}p \ln (p+1)$ and $p$ such that
    $(p+1)^{(2p+3)(1+\frac{p}{a})} p^3 \ln (p+1) \sim a\log(2C)C_0$.
  \end{rem}
  
\subsection{Numerical Examples}
\subsubsection{First example}
The following example is borrowed from \cite{LMT_14}.
We consider a Poisson process $N$ with $\kappa=1$ and the following BSDE
\begin{align*}
  &dY_t=-c U_t dt +Z_t dB_t +U_t(dN_t-dt),\\
  &\xi=N_T.
\end{align*}
The explicit solution is given by
\begin{align*}
  (Y_t,Z_t,U_t)=(N_t+(1+c)(T-t),0,1).
\end{align*}
Figure \ref{fig1} represents the evolution of $(Y^{q,p,N,M}_0,Z^{q,p,N,M}_0,U^{q,p,N,M}_0)$ with respect to
$M$ when $q=5$, $p=2$ and $N=20$.

\begin{figure}[H]
  \begin{center}
    \includegraphics[width=15cm]{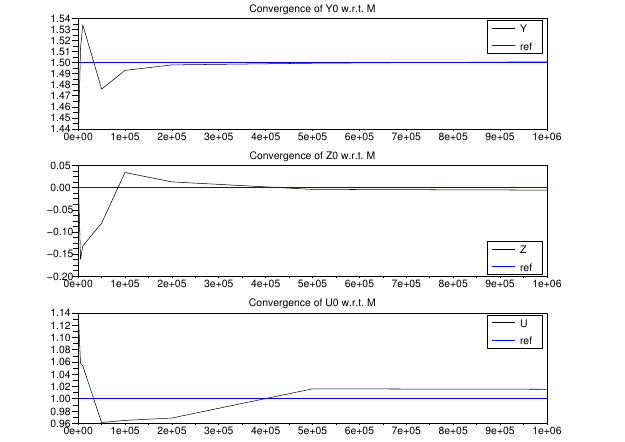}
    \caption{Evolution of $(Y^{q,p,N,M}_0,Z^{q,p,N,M}_0,U^{q,p,N,M}_0)$ with
      respect to $M$ when $p=2$, $N=20$, $q=5$, $c=0.5$, $T=1$}\label{fig1}
  \end{center}
\end{figure}

Table \ref{tab1} gives the computational
time needed by the algorithm with this choice for $q$, $p$, $N$ and for
different values of $M$. We notice from Figure \ref{fig1} that the value of
$(Y^{q,p,N,M}_0,Z^{q,p,N,M}_0,$ $U^{q,p,N,M}_0)$ is close to the true
solution from $M=2\times10^5$. When $M=2\times10^5$, the CPU time is about $1$ minute,
which is quite small.

\begin{table}[htbp]
  \begin{center}
    \begin{tabular}{|c||c|c|c|c|c|c|c|c|}
      \hline
      M & $10^3$ & $5\times 10^3$ & $10^4$ & $5\times 10^4$ & $10^5$ & $2\times 10^5$ & $5\times
      10^5$ & $10^6$\\
      \hline
      CPU time (in $s$) & 0.253 & 1.277 & 2.567 & 13.24 & 26.81 & 56.91 & 142.75 & 283.65 \\
      \hline
    \end{tabular}
  \end{center}
  \caption{CPU time w.r.t. $M$ when $p=2$, $N=20$, $q=5$, $c=0.5$, $T=1$}\label{tab1}
\end{table}

\subsubsection{Second example}

We consider now the following BSDE
\begin{align*}
  &dY_t=-(\alpha Y_t + \beta Z_t+ \gamma U_t) dt +Z_t dB_t +U_t d\tilde{N}_t,\\
  &\xi=\exp(aT+b B_T + c N_T).
\end{align*}
 The explicit solution is given by
\begin{align*}
& Y_t= e^{aT + bB_t + c N_t }
e^{(\alpha + \frac{(b  + \beta)^2-\beta^2  }{2} )(T-t)+ (e^c-1) (\kappa + \gamma) (T-t)},\\ 
&  Z_t = \e_{t^-}(D_t^0 Y_t) = b Y_{t^-},\;\; U_t =\e_{t-} (D_t^1 Y_t) = (e^c-1) Y_{t-}
\end{align*}

We choose $\alpha=\beta=0.3$, $\gamma=0.2$, $a=-0.1$, $b=0.1$, $c=0.2$,
$\kappa=3$ and $T=2$. For these values, we get
$(Y_0,Z_0,U_0)=(6.599,0.66,1.4612)$. For $M=4\times 10^5$, $p=2$, $N=50$ and
$q=10$, we get $(Y^{q,p,N,M}_0, Z^{q,p,N,M}_0, U^{q,p,N,M}_0)=
(6.560,0.56,1.294)$. We plot one path of
$(Y^{q,p,N,M}_t,Y_t)_{t\le T}$, $(Z^{q,p,N,M}_t,Z_t)_{t\le T}$ and $(U^{q,p,N,M}_t,U_t)_{t\le T}$ in
Figures \ref{fig2}, \ref{fig3}, \ref{fig4} with $M=4\times 10^5$, $p=2$, $N=50$ and $q=10$.
\begin{figure}[H]
  \begin{center}
  \vspace{-1em}
  \includegraphics[width=12cm]{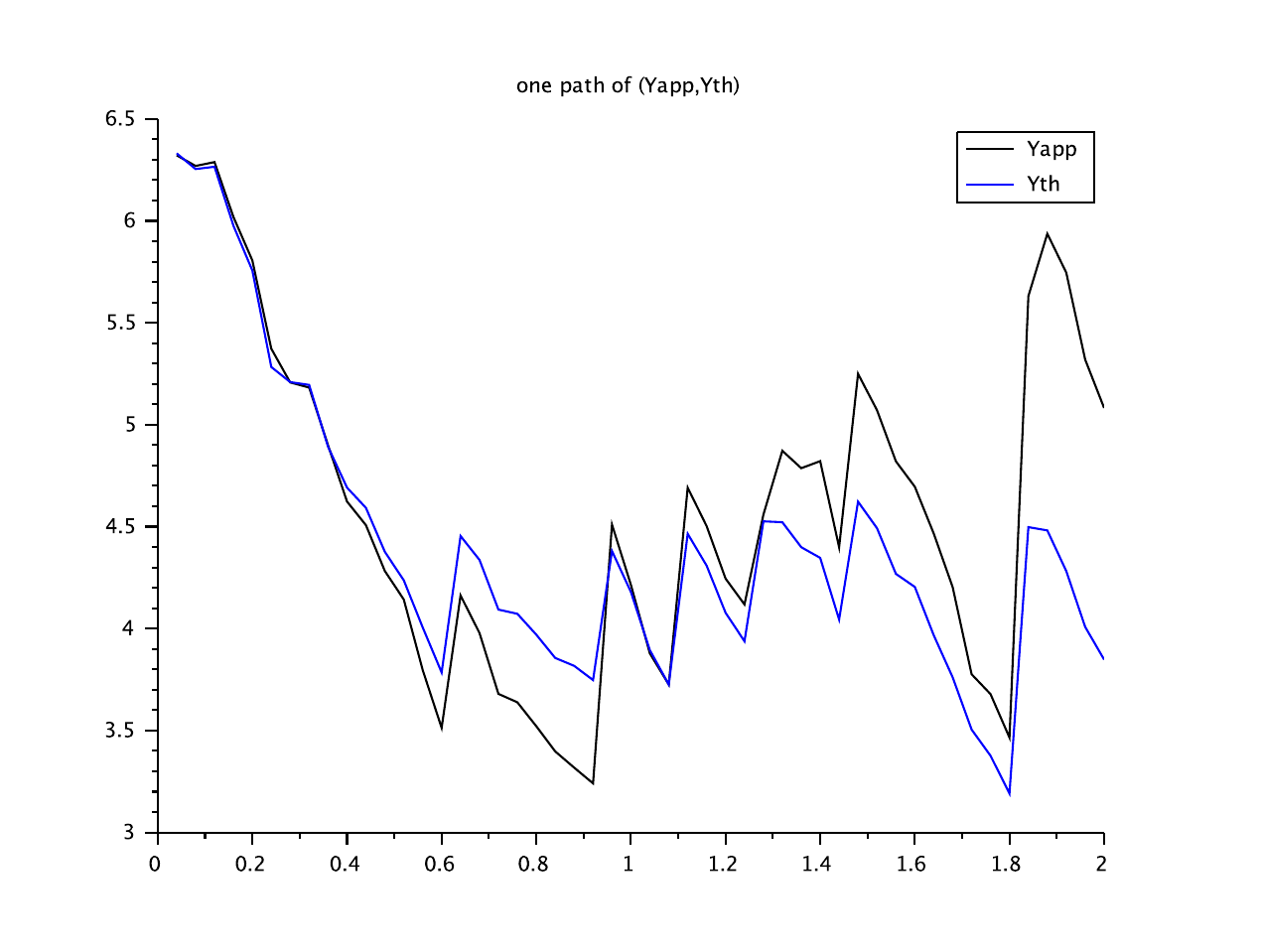}
      \caption{One path of $(Y^{q,p,N,M},Y)$}\label{fig2}
  \end{center}
\end{figure}
\begin{figure}[H]
  \begin{center}
  \includegraphics[width=12cm]{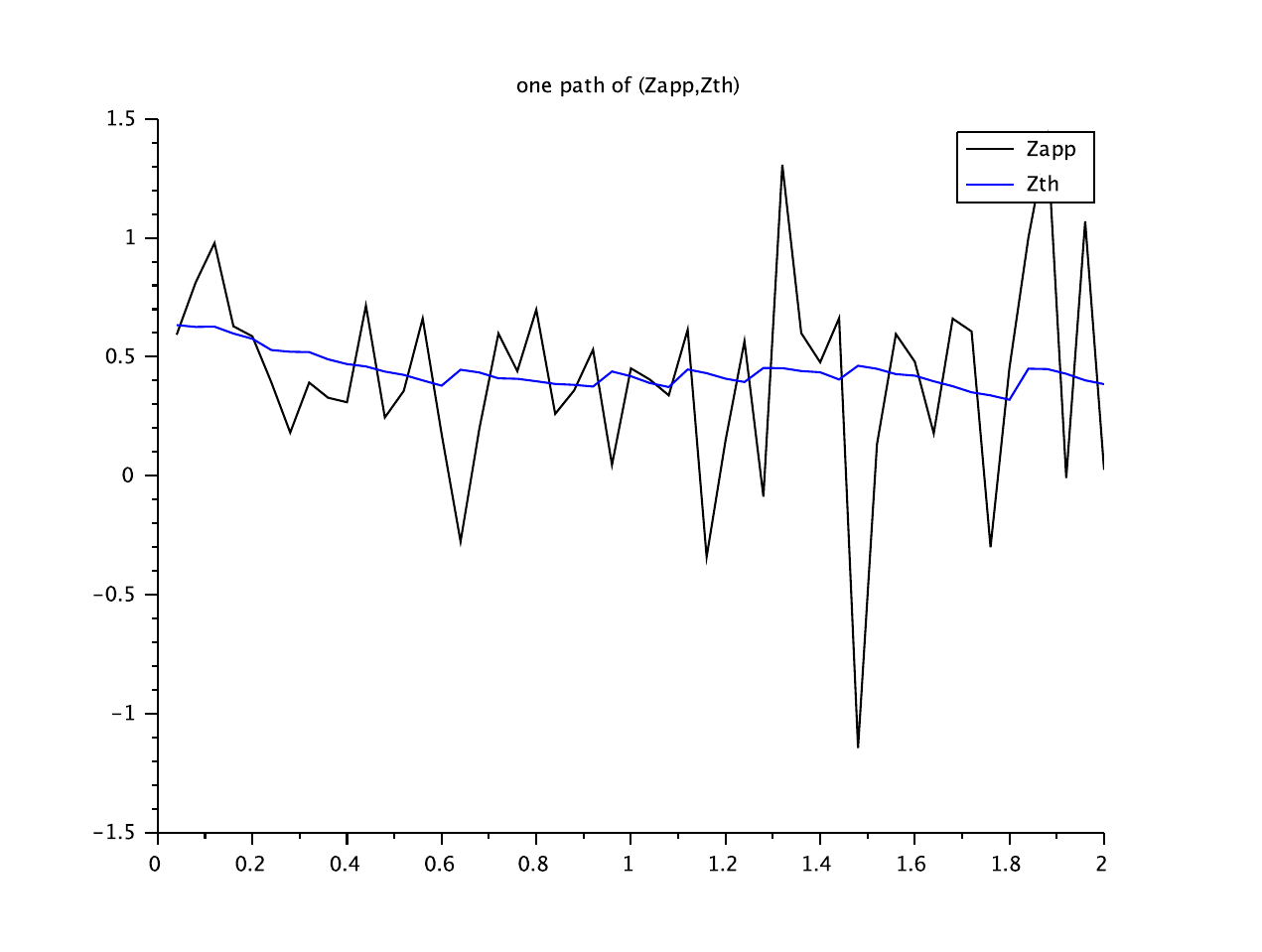}
    \caption{One path of $(Z^{q,p,N,M},Z)$}\label{fig3}
  \end{center}
\end{figure}

\begin{figure}[H]
  \begin{center}
  \includegraphics[width=12cm]{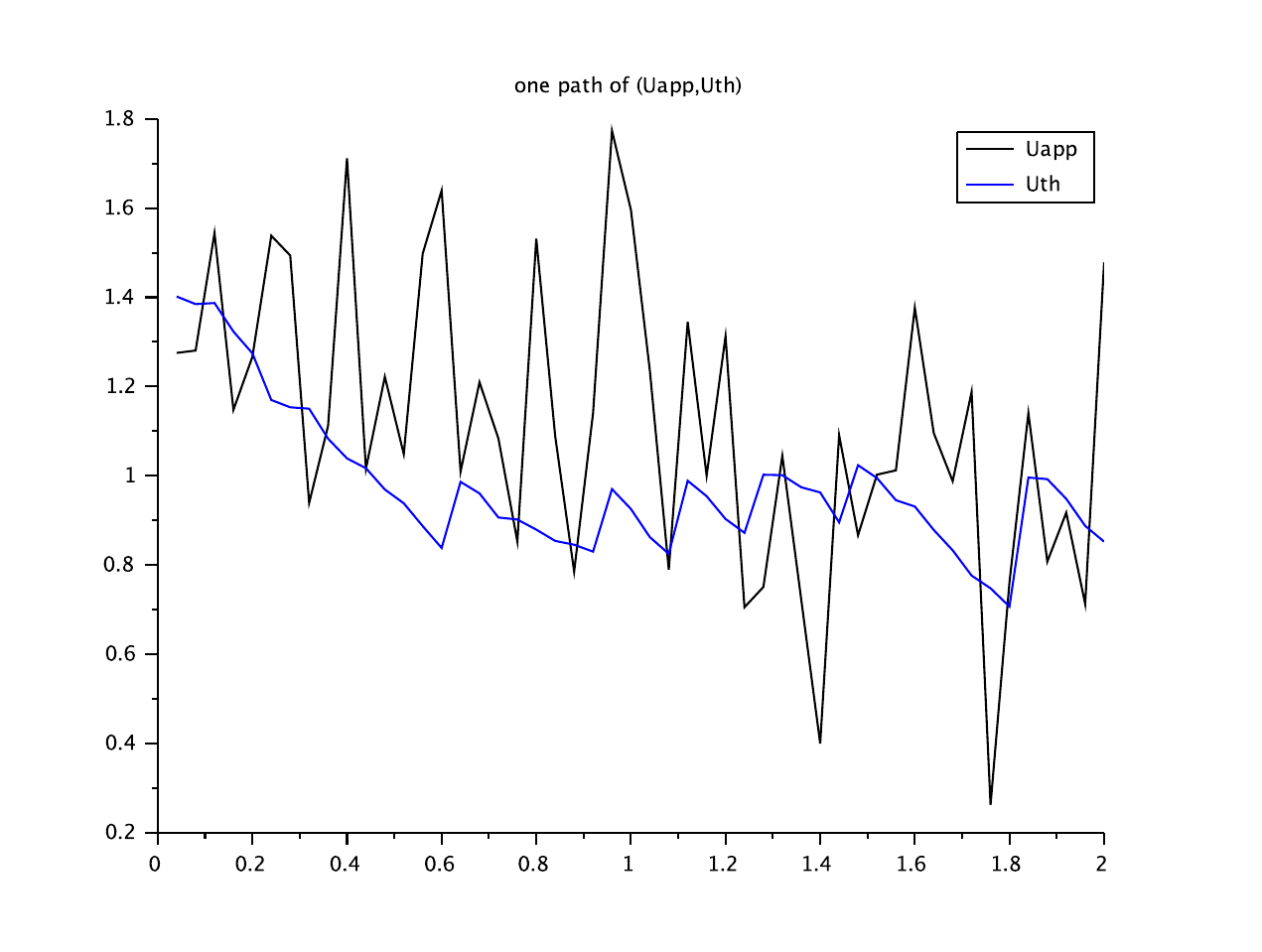}
    \caption{One path of $(U^{q,p,N,M},U)$}\label{fig4}
  \end{center}
\end{figure}

\appendix
\section{Technical results}\label{sec:appendix}
  
\subsection{Proof of Lemmas \ref{lem5} and \ref{lem5b}}\label{proof_lem5}
  \subsubsection{Proof of Lemma \ref{lem5}}
Let $\tilde{t}_l:=\max\{t_1,\cdots,t_l\}$. First, we prove by induction that $\forall q' \le q$, $(Y^{q'},Z^{q'},U^{q'})$ belongs to $\mathcal{S}^{m,\infty}$,
i.e. $\forall j \ge 2$
\begin{align*}
 \|(Y^{q'},Z^{q'},U^{q'})\|^j_{m,j}=\sum_{1\le l \le m} \sum_{ \mathbf{i}_l} \operatorname*{ess \,sup}_{t_1, \cdots, t_l}&\left\{
    \e[\sup_{\tilde{t}_l\le r \le T} |D^{\mathbf{i}_l}_{t_1, \cdots, t_l} Y^{q'}_r|^j] +\int_{\tilde{t}_l}^T \e[|D^{\mathbf{i}_l}_{t_1, \cdots, t_l} Z^{q'}_r|^j]
    dr\right.\\
  &\left.+\int_{\tilde{t}_l}^T \e[|D^{\mathbf{i}_l}_{t_1, \cdots, t_l} U^{q'}_r|^j]
    dr\right\}< \infty.
\end{align*}
Let $r \ge \tilde{t}_l$. Using \eqref{eq:Y_q} gives
\begin{align*}
  D^{\mathbf{i}_l}_{t_1, \cdots, t_l} Y^{q'}_r=\e_r[ D^{\mathbf{i}_l}_{t_1, \cdots, t_l}
  F^{q'-1}]-\!\int_{\tilde{t}_l}^r \!D^{\mathbf{i}_l}_{t_1, \cdots, t_l}  f(\theta^{q'-1}_s) ds, \,
  \mbox{where $\theta^{q'-1}_s:=(s,Y^{q'-1}_s,Z^{q'-1}_s,U^{q'-1}_s)$.}
\end{align*}

Using the Definition of $F^{q'-1}$ and applying Doob's inequality leads to
\begin{align*}
  \e[\sup_{t_l\le r \le T} |D^{\mathbf{i}_l}_{t_1, \cdots, t_l} Y^{q'}_r|^j] \le C(j)\left( \e[|D^{\mathbf{i}_l}_{t_1, \cdots, t_l}
  \xi|^j]+\e\left(\int_{\tilde{t}_l}^T |D^{\mathbf{i}_l}_{t_1, \cdots, t_l}
      f(\theta^{q'-1}_s)|^j ds\right)\right),
\end{align*}
where $C(j)$ is a generic constant depending also on $T$. Analyzing the
outcome of the repeated Malliavin derivative where for $D^{(0)}_tf(\theta^{q'-1}_s)$ the chain rule holds while 
$$D^{(1)}_tf(\theta^{q'-1}_s) = f(s,Y^{q'-1}_s +D^{(1)}_tY^{q'-1}_s ,Z^{q'-1}_s +D^{ (1)}_tZ^{q'-1}_s ,U^{q'-1}_s+D^{ (1)}_tU^{q'-1}_s ) -  f(\theta^{q'-1}_s) $$  (see, for example, \cite[Lemma 3.2]{GS_13}), 
one can see that the term 
$|D^{\mathbf{i}_l}_{t_1, \cdots, t_l} f(\theta^{q'-1}_s)|$ is bounded
by a sum of
terms of type
\begin{align*}
\left (\sum_{k=1}^{l_0+l_1+l_2} \|\partial_{sp}^k f\|_{\infty} \right )
 \left |  D^{\mathbf{k}_0}_{\mathbf{t}_0} Y_s^{q'-1} \right |
\left |D^{\mathbf{k}_1}_{\mathbf{t}_1} Z_s^{q'-1}\right |
\left |D^{\mathbf{k}_2}_{\mathbf{t}_2} U_s^{q'-1} \right | ,
\end{align*}
 where $\mathbf{k}_j \in \{0,1\}^{l_j}$ are
vectors of size $l_j$ and 
$l_0+l_1+l_2 \le l.$ Then, Hölder's inequality gives
\begin{align}\label{eq26}
 \e\left(\int_{\tilde{t}_l}^T |D^{\mathbf{i}_l}_{t_1, \cdots, t_l}
   f(\theta^{q'-1}_s)|^j ds\right) \le C\left  (\sum_{k=1}^{l} \|\partial^k_{sp}f\|^j_{\infty} \right )\|(Y^{q'-1},Z^{q'-1},U^{q'-1})\|^{lj}_{l,lj}
  \end{align}
  and
  \begin{align}\label{eq25}
     \sum_{1\le l \le m} \sum_{\mathbf{i}_l \in \{0,1\}^l}  & \operatorname*{ess \,sup}_{t_1, \cdots, t_l} \,
    \e[ \sup_{t_l\le r \le T} |D^{\mathbf{i}_l}_{t_1, \cdots, t_l}
    Y^{q'}_r|^j]  \notag \\
    &\le
    C(j)\left(\|\xi\|^j_{m,j}+\sum_{l=1}^m \left (\sum_{k=1}^l \|\partial^k_{sp}f\|^j_{\infty} \right) \|(Y^{q'-1},Z^{q'-1},U^{q'-1})\|^{lj}_{l,lj}\right).
  \end{align}

  From \eqref{eq:ZU_q}, we get $D^{\mathbf{i}_l}_{t_1, \cdots, t_l}
  Z^{q'}_r=\e_r[D^{(i_1,\cdots,i_l,0)}_{t_1, \cdots, t_l,r} \xi +\int_r^T D^{(i_1,\cdots,i_l,0)}_{t_1,
    \cdots, t_l,r} f(\theta^{q'-1}_u) du]\ind_{\{r\ge \tilde{t}_l\}}$. Then
  \begin{align*}
    \int_{\tilde{t}_l}^T \e[|D^{\mathbf{i}_l}_{t_1, \cdots, t_l} & Z^{q'}_r|^j]
    dr \\ & \le C(j)\left(\int_{\tilde{t}_l}^T \e[|D^{(i_1,\cdots,i_l,0)}_{t_1, \cdots, t_l,r}
      \xi|^j]dr+\int_{\tilde{t}_l}^T \e\left(\left|\int_r^T D^{(i_1,\cdots,i_l,0)}_{t_1,
    \cdots, t_l,r} f(\theta^{q'-1}_u) du\right|^j\right)dr\right).
\end{align*}

  Using \eqref{eq26} yields
  \begin{align*}
     \sum_{1\le l \le m}\sum_{\mathbf{i}_l \in \{0,1\}^l} & \operatorname*{ess \,sup}_{t_1, \cdots, t_l} \,  \int_{\tilde{t}_l}^T \e[|D^{\mathbf{i}_l}_{t_1, \cdots, t_l} Z^{q'}_r|^j]
     dr \\
     &\le C(j)\left( \|\xi\|^j_{m+1,j}+\sum_{l=1}^m \left (\sum_{k=1}^{ l+1} \|\partial^k_{sp}f\|^j_{\infty} \right) \|(Y^{q'-1},Z^{q'-1},U^{q'-1})\|^{(l+1)j}_{(l+1),(l+1)j}\right).
  \end{align*}
  The same type of result holds for $\int_{\tilde{t}_l}^T \e[|D^{\mathbf{i}_l}_{t_1, \cdots, t_l} U^{q'}_r|^j]dr $.
  Combining this result with \eqref{eq25} gives
  $$\|(Y^{q'},Z^{q'},U^{q'})\|^j_{m,j} \le
  C(j)\!\left(\!\|\xi\|^j_{ m+1,j}+ \left (\sum_{k=1}^{m+1}
  \|\partial^k_{sp}f\|^j_{\infty} \!\right )\sum_{l=1}^{m}\| (Y^{q'-1},Z^{q'-1},U^{q'-1})\|^{(l+1)j}_{(l+1),(l+1)j}\right).
  $$
  Iterating this inequality yields the result.

 \subsubsection{Proof of Lemma \ref{lem5b}}
   We prove it by induction on $q.$ Let $r\ge \tilde{t}_l\!:=\max\{t_1,\cdots,t_l\}$ and   $\theta^{q,p}_s:=(s,Y^{q,p}_s,Z^{q,p}_s,U^{q,p}_s).$ From \eqref{eq:Y_qp} we get that
  \begin{align*}
    D^{\mathbf{i}_l}_{t_1, \cdots, t_l} Y^{q+1,p}_r&=\e_r[ D^{\mathbf{i}_l}_{t_1, \cdots, t_l}
 \cc_p (F^{q,p})]-\int_{\tilde{t}_l}^r D^{\mathbf{i}_l}_{t_1, \cdots, t_l}  f(\theta^{q,p}_s) ds \\
 &=\e_r[ 
 \cc_{p-l} (D^{\mathbf{i}_l}_{t_1, \cdots, t_l}  F^{q,p})]\ind_{\{l\le p\}}-\int_{\tilde{t}_l}^r D^{\mathbf{i}_l}_{t_1, \cdots, t_l}  f(\theta^{q,p}_s) ds,
  \end{align*}
  where we have used Lemma \ref{lem4} to get the second equality.
  Applying Doob's maximal inequality leads to
\begin{align}\label{eq30}
  \e[\sup_{\tilde{t}_l\le r \le T} |D^{\mathbf{i}_l}_{t_1, \cdots, t_l} Y^{q+1,p}_r|^j] \le & C(j)\bigg ( \e[|\cc_{p-l} (D^{\mathbf{i}_l}_{t_1, \cdots, t_l}  F^{q,p})|^j]\ind_{\{l\le p\}} \nonumber  \\ 
  & \quad \quad \quad+\e\left(\int_{\tilde{t}_l}^T |D^{\mathbf{i}_l}_{t_1, \cdots, t_l}
      f(\theta^{q,p}_s)|^j ds\right)\bigg),
\end{align} where $C(j)$ is a generic constant depending also on $T$. Let us
first deal with the first term of the r.h.s. of \eqref{eq30}, we assume $l \le
p$. Following Proposition \ref{chaos-expansion}, we know that
$F^{q,p}=\sum_{n=0}^{\infty} I_n(g_n)$. Then
\begin{align*}
  D^{\mathbf{i}_l}_{t_1, \cdots, t_l} F^{q,p}&=\sum_{n=l}^{\infty}n(n-1)\cdots(n-l+1)I_{n-l}(g_n(*,z_1,\cdots,z_l)),
  \end{align*} 
  with $z_k =(t_k,i_k)$ and
\begin{align*}  
  \cc_{p-l} (D^{\mathbf{i}_l}_{t_1, \cdots, t_l}  F^{q,p})&=\sum_{n=l}^p
  n(n-1)\cdots(n-l+1)I_{n-l}(g_n(*,z_1,\cdots,z_l)),\\
  &=\sum_{n=0}^{p-l} \frac{(n+l)!}{n!} I_n(g_{n+l}(*,z_1,\cdots,z_l)).
\end{align*} Let us denote $\hat g_{n_i}(*):= g_{n_i+l}(*,z_1,\cdots,z_l)$. 
From Proposition \ref{E-of-chaos-products} we get
\begin{align} \label{E-of-product-of-chaos}
&  \e[|\cc_{p-l} (D^{\mathbf{i}_l}_{t_1, \cdots, t_l}
  F^{q,p})|^j] \nonumber \\
  &=\sum_{n_1,\cdots,n_j=0}^{p-l} \e(I_{n_1}(\hat g_{n_1})\cdots
  I_{n_j}(\hat g_{n_j})) \frac{(n_1+l)!}{n_1!}\cdots \frac{(n_j+l)!}{n_j!}  \nonumber\\
  &=\sum_{n_1,\cdots,n_j=0}^{p-l} \!\!\! \frac{(n_1+l)!}{n_1!}\cdots \frac{(n_j+l)!}{n_j!} \!\!\!
   \sum_{J^B \in [n]} \,\, \sum_{(\tau,\sigma) \in \Pi_{=2,\ge 2}(J^B; n_1,\ldots,n_j)}
 \kappa^{|\sigma|}  \int_{[0,T]^{|\tau|+|\sigma|}} \bigg ( \bigotimes_{i=1}^j \hat g_{n_i} \bigg )_{\tau \cup\sigma}  d\lambda^{|\tau|+|\sigma|} \nonumber \\
   & \le \sum_{n_1,\cdots,n_j=0}^{p-l} \frac{(n_1+l)!}{n_1!}\cdots \frac{(n_j+l)!}{n_j!} \prod_{i=1}^j
  \|g_{n_i+l}\|_{\infty} 
   \sum_{J^B \in [n]} \,\, \sum_{(\tau,\sigma) \in \Pi_{=2,\ge 2}(J^B; n_1,\ldots,n_j)}
 \kappa^{|\sigma|} \, T^{|\tau|+|\sigma|}. 
 \end{align} Thanks to the assumptions on $f$ and $\xi$ and induction
hypothesis, we have $F^{q,p} \in \mathbb{D}^{p,2}$. Then,
\eqref{kernel-Malliavin-rel} gives that
$g_{n_i+l}(z_1,\cdots,z_{n_i+l})=\frac{1}{(n_i+l)!}\e(D^{(i_1,\cdots,i_{n_i+l})}_{t_1,
  \cdots, t_{n_i+l}}( F^{q,p}))$, then $\|g_{n_i+l}\|_{\infty} \le
\frac{1}{(n_i+l)!}  \|F^{q,p}\|_{n_i+l,1}$. Since $n_i \le p-l$, we get
$\|g_{n_i+l}\|_{\infty} \le \frac{1}{(n_i+l)!}  \|F^{q,p}\|_{p,1}$.  Then

\begin{align}\label{eq32}
  \e[|\cc_{p-l} (D^{\mathbf{i}_l}_{t_1, \cdots, t_l}
  F^{q,p})|^j]&\le (\|F^{q,p}\|_{p,1})^j\sum_{n_1,\cdots,n_j=0}^{p-l}  
   \sum_{J^B \in [n]} \,\, \sum_{(\tau,\sigma) \in \Pi_{=2,\ge 2}(J^B; n_1,\ldots,n_j)}
 \kappa^{|\sigma|} \, T^{|\tau|+|\sigma|} \nonumber\\
& \le C(p,j)
  (\|F^{q,p}\|_{p,1})^j.
  \end{align} 

We have $\|F^{q,p}\|_{p,1}=\sum_{l\le p} \sum_{\mathbf{i}_l \in \{0,1\}^l}
\operatorname*{ess \,sup}_{t_1, \cdots, t_l} \e(|D_{t_1,\cdots,t_l}^{\mathbf{i}_l}
F^{q,p}|)$ where
\begin{align*}
  \e(|D_{t_1,\cdots,t_l}^{\mathbf{i}_l}
F^{q,p}|)\le \e(|D_{t_1,\cdots,t_l}^{\mathbf{i}_l} \xi|)+\e(\int_{\tilde{t}_l}^T |D_{t_1,\cdots,t_l}^{\mathbf{i}_l} f(\theta_s^{q,p})|ds).
\end{align*}
By using \eqref{eq26}, we get $\e\left(\int_{\tilde{t}_l}^T
  |D^{\mathbf{i}_l}_{t_1, \cdots, t_l} f(\theta^{q,p}_s)| ds\right)\le
C \left (\sum_{k=1}^l
\|\partial^k_{sp}f\|_{\infty} \right)\|(Y^{q,p},Z^{q,p},U^{q,p})\|^{l}_{l,l}$. Then
\begin{align}
  \|F^{q,p}\|_{p,1} & \le \|\xi \|_{p,1}+\sum_{l\le p} C\left (\sum_{k=1}^l
  \|\partial^k_{sp}f\|_{\infty} \right )\|(Y^{q,p},Z^{q,p},U^{q,p})\|^{l}_{l,l},\label{eq31c}\\
 \|F^{q,p}\|_{p,1}^j &\le C(p,j)\left( \|\xi \|^j_{p,1} +\sum_{l \le p}C\left(\sum_{k=1}^l \|\partial^k_{sp}f\|^j_{\infty}\right) \|(Y^{q,p},Z^{q,p},U^{q,p})\|^{lj}_{l,lj}\right).\label{eq31b}
\end{align}
  
Let us now deal with the second term of the r.h.s. of \eqref{eq30}. By using
\eqref{eq26}, we get
\begin{align}\label{eq33}
 \e\left(\int_{\tilde{t}_l}^T |D^{\mathbf{i}_l}_{t_1, \cdots, t_l}
      f(\theta^{q,p}_s)|^j ds\right)\le C\left(\sum_{k=1}^l \|\partial^k_{sp}f\|^j_{\infty}\right)\|(Y^{q,p},Z^{q,p},U^{q,p})\|^{lj}_{l,lj}.
  \end{align}
  Combining \eqref{eq32}, \eqref{eq31b}, \eqref{eq33} and \eqref{eq30} yields
  \begin{align*}
    & \e[\sup_{\tilde{t}_l\le r \le T} |D^{\mathbf{i}_l}_{t_1, \cdots, t_l}
    Y^{q+1,p}_r|^j]\\& \le  C(p,j)\left(\|\xi \|^j_{p,1} +\sum_{l \le p} C\left(\sum_{k=1}^l
      \|\partial^k_{sp}f\|^j_{\infty}\right)
      \|(Y^{q,p},Z^{q,p},U^{q,p})\|^{lj}_{l,lj}\right)\ind_{\{l\le p\}}\\
    &+C\left (\sum_{k=1}^l \|\partial^k_{sp}f\|^j_{\infty} \right)\|(Y^{q,p},Z^{q,p},U^{q,p})\|^{lj}_{l,lj},
  \end{align*}
   and
  \begin{align}\label{eq34}
 &    \sum_{1\le l \le m} \sum_{\mathbf{i}_l \in \{0,1\}^l} \operatorname*{ess \,sup}_{t_1, \cdots, t_l}
    \e[\sup_{\tilde{t}_l\le r \le T} |D^{\mathbf{i}_l}_{t_1, \cdots, t_l} Y^{q+1,p}_r|^j] \nonumber \\&\le
    C(p,j)\left(\|\xi\|^j_{p,1}+\sum_{l=1}^{m\vee p} C \left (\sum_{k=1}^l \|\partial^k_{sp}f\|^j_{\infty} \right) \|(Y^{q,p},Z^{q,p},U^{q,p})\|^{lj}_{l,lj}\right).
  \end{align}
  From \eqref{eq:ZU_qp} we get 
  $$D^{(i_1,\cdots,i_l)}_{t_1, \cdots, t_l}
  Z^{q+1,p}_r=\e_r[D^{(i_1,\cdots,i_l,0)}_{t_1, \cdots, t_l,r}
  \cc_p(F^{q,p})]=\e_r[\cc_{p-l-1} (D^{(i_1,\cdots,i_l,0)}_{t_1, \cdots,
    t_l,r} F^{q,p})]\ind_{\{l \le p-1\}}\ind_{\{r\ge \tilde{t}_l\}}.$$ 
    Then
  \begin{align*}
    \int_{\tilde{t}_l}^T \e[|D^{\mathbf{i}_l}_{t_1, \cdots, t_l} Z^{q+1,p}_r|^j]
    dr \le C\left(\int_{\tilde{t}_l}^T \e[|\cc_{p-l-1} (D^{(i_1,\cdots,i_l,0)}_{t_1, \cdots,
        t_l,r} F^{q,p})|^j]dr\right)\ind_{\{l \le p-1\}}.
  \end{align*}
  Using \eqref{eq32} and \eqref{eq31b} leads to 
 \begin{align*}
   &  \sum_{1\le l \le m} \sum_{\mathbf{i}_l \in \{0,1\}^l} \operatorname*{ess \,sup}_{t_1, \cdots, t_l} \int_{\tilde{t}_l}^T 
\e[|D^{\mathbf{i}_l}_{t_1, \cdots, t_l} Z^{q+1,p}_r|^j]dr \nonumber \\
&\le C(p,j)\left( \|\xi\|^j_{p,1}+\sum_{l=1}^p C \left (\sum_{k=1}^{l}
      \|\partial^k_{sp}f\|^j_{\infty} \right) \|(Y^{q,p},Z^{q,p},U^{q,p})\|^{lj}_{l,lj}\right).
  \end{align*}
  The same type of result holds for $\int_{\tilde{t}_l}^T \e[|D^{\mathbf{i}_l}_{t_1, \cdots, t_l} U^{q+1,p}_r|^j]dr $.
  Combining these results with \eqref{eq34} gives
  $$\|(Y^{q+1,p},Z^{q+1,p},U^{q+1,p})\|^j_{m,j} \le \!
  C(p,j)\left(\|\xi\|^j_{p,1}+\sum_{l=1}^{m\vee p} C\left (\sum_{k=1}^l \|\partial^k_{sp}f\|^j_{\infty} \right ) \|(Y^{q,p},Z^{q,p},U^{q,p})\|^{lj}_{l,lj}\right).
  $$
  Iterating this inequality yields the result.
  
\subsection{Proof of Lemma \ref{basis-truncation}}\label{sec:proof_basis-truncation}

We will prove the assertion by induction  in $p \in \nset.$ Since $ (\mathcal{C}_0^N)(F) =(\mathcal{C}_0)(F)$
Lemma \ref{basis-truncation}  holds for $p=0.$ Assume that  for $p \in \nset^*$
\[
   \e|   (\mathcal{C}_{p-1}^N -\mathcal{C}_{p-1})(F)|^2 \le   \sum_{j=1}^{p-1}(K_{j}^F)^2 \bigg ( \frac{T}{N} \bigg)^{2 \beta_F} \sum_{i=1}^{p-1} i^2 \frac{T^i}{i!}.
   \]
 By using \eqref{eq5} and \eqref{eq51}, we get
\[
 (\mathcal{C}_p^N -\mathcal{C}_p)(F) =(\mathcal{C}_{p-1}^N -\mathcal{C}_{p-1})(F) +(P_p^N - P_p)(F).
 \]
Then, it suffices to show that 
\[\e |(P_p^N - P_p)(F)|^2 \le  (K_p^F)^2  \left ( \frac{T}{N} \right )^{2 \beta_F} p^2 \frac{T^p}{p!}.\]
We have $ P_p(F) = I_p(g_p)$ where we will assume that $g_p$ is symmetric. It holds  
\[
   P^N_p(F) = I_p(g_p^N) 
\]
with 
\begin{align*}
     g_p^N((t_1,i_1),\cdots,(t_p,i_p)) =&  \sum_{{\bf k}_p \in\{1,\cdots, N\}^p} \langle g_p((\cdot,i_1),\cdots,(\cdot,i_p)),  e[k_1,...,k_p]\rangle_{L^2([0,T]^p)} \\
     & \quad \quad \quad \quad \times  e[k_1,...,k_p](t_1,\cdots, t_p).
     \end{align*}

Then $g_p^N$ is constant w.r.t. $(t_1,\cdots,t_p) \in \Lambda_{ {\bf k}_p} := \Lambda_{k_1} \times \cdots \times \Lambda_{k_p} $
with $\Lambda_i := ]\ov{t}_{i-1},\ov{t}_i]$ since $e[k_1,...,k_p] =   h^{-\frac{p}{2}} \ind_{\Lambda_{ {\bf k}_p} }.  $
We have by \eqref{isometry-formula}, \eqref{kernel-Malliavin-rel} and
\eqref{eq31} that

\begin{align*}
&  \e|  (P_p^N - P_p)(F)|^2 \\&=  \e|  I_p(g_p^N)     - I_p(g_p)|^2 \\
  &= \sum_{{\bf k}_p} \sum_{{\bf i}_p}  \kappa^{|{\bf i}_p  |}  p!   \|  h^{-\frac{p}{2}} \langle g_p((\cdot,i_1),\cdots,(\cdot,i_p)),  e[k_1,...,k_p]\rangle_{L^2([0,T]^p)}  - g_p((\cdot,i_1),\cdots,(\cdot,i_p) )
  \|^2_{L_2(\Lambda_{ {\bf k}_p})} \\
   &= \sum_{{\bf k}_p} \sum_{{\bf i}_p}  \kappa^{|{\bf i}_p  |}  p!  \left \|  h^{-p}\!\!\!   \int_{\Lambda_{ {\bf k}_p}}  g_p((s_1,i_1),\cdots,(s_p,i_p))  - g_p((\cdot,i_1),\cdots,(\cdot,i_p)) ds_1 ... d s_p
   \right \|^2_{L_2(\Lambda_{ {\bf k}_p})} \\
   &\le \sum_{{\bf k}_p{\bf i}_p}  \kappa^{|{\bf i}_p  |}  p!
   h^{-2p}\!\!\!  \int_{\Lambda_{\mathbf{k}_p}}\!\!\!  \left(\int_{\Lambda_{ {\bf k}_p}}
   \!\!\!   |g_p((s_1,i_1),\cdots,(s_p,i_p))  - g_p((t_1,i_1),\cdots,(t_p,i_p))|
     ds_1 ... d s_p\!\right)^2 \!\!\!  dt_1 \cdots dt_p\\
    &\le \sum_{{\bf k}_p} \sum_{{\bf i}_p}  \kappa^{|{\bf i}_p  |}  \frac{1}{p!}
   h^{-2p}\int_{\Lambda_{\mathbf{k}_p}} \left(\int_{\Lambda_{ {\bf k}_p}}
     K^F_p(|t_1-s_1|^{\beta_{F}}+\cdots+|t_p-s_p|^{\beta_{F}})
     ds_1 ... d s_p\right)^2 dt_1 \cdots dt_p\\
   &\le  (K^F_p)^2T^p(1+ \kappa)^p p^2  \frac{1}{p!} \big ( \frac{T}{N} \big )^{2
   \beta_F}.
\end{align*}

  \subsection{Proof of Lemma \ref{lem50}}\label{proof_lem50}

 We will show that if 
\begin{align}  \label{induction-assumption-onYZU}
(Y_t^{q,p},Z_t^{q,p},U_t^{q,p})  \text{  satisfies }  \mathcal{H}_p  \text{ for a.e. } t \in [0,T]
\end{align} 
(with  $\beta_{I_{q,p}}=\frac{1}{2}\wedge \beta_{\xi}$)
then also  $I_{q,p}=\int_0^T f(s,Y_s^{q,p},Z_s^{q,p},U_s^{q,p})ds$ does satisfy $\mathcal{H}_p$. 
As $I_{0,p}$ is constant, it satisfies $\mathcal{H}_p$.   For $q \ge 1$
we will use the notation  $D_{\mathbf{t}}^{\alpha(1:i-1)} \Delta^{\alpha_i}_i
D_{\mathbf{s}}^{\alpha(i+1: r)} F   :=D^{(\alpha_1,\cdots,\alpha_{i-1})}_{t_1,\cdots,t_{i-1}}
  (D_{t_i,s_{i+1},\cdots,s_r}^{(\alpha_i,\cdots,\alpha_r)}F -
  D_{s_i,s_{i+1},\cdots,s_r}^{(\alpha_i,\cdots,\alpha_r)}F)$
 and  prove that for $1 \le r\le p$
 $$\e |D^{\alpha(1:i-1)}_{\mathbf{t}} \Delta^{\alpha_i}_i D^{\alpha(i+1:r)}_{\mathbf{s}} I_{q,p}|^j \le
K_r(j) |t_i-s_i|^{j\beta_{I_{q,p}}}$$ 
(that $\mathcal{H}^1_p$ holds for $I_{q,p}$ is clear).
Setting  $\mathbf{ts}_{-i} :=\max\{t_1,\cdots,t_{i-1},s_{i+1},\cdots,s_r\}$ and $\theta_u^{q,p} :=(u,Y_u^{q,p},Z_u^{q,p},$ $U_u^{q,p})$ we have
\begin{align} \label{DIqp}
 D^{\alpha(1:i-1)}_{\mathbf{t}} \Delta^{\alpha_i}_i
D^{\alpha(i+1:r)}_{\mathbf{s}} I_{q,p} &= \int_{t_i \vee s_i \vee \mathbf{ts}_{-i}}^TD^{\alpha(1:i-1)}_{\mathbf{t}} 
    \Delta^{\alpha_i}_i D^{\alpha(i+1:r)}_{\mathbf{s}}  f(\theta_u^{q,p} )du  \nonumber \\
& \pm \int_{(t_i \wedge s_i) \vee \mathbf{ts}_{-i}}^{t_i \vee s_i \vee \mathbf{ts}_{-i}} D^{\alpha(1:i-1)}_{\mathbf{t}} 
    D^{\alpha_i}_{t_i\wedge s_i} D^{\alpha(i+1:r)}_{\mathbf{s}}  f(\theta_u^{q,p} )du,
\end{align}
where $\pm = -$ for $t_i > s_i,$  and $\pm = +$ for the opposite case.
From the proof of Lemma \ref{lem5} we know that  $|D^{\alpha(1:i-1)}_{\mathbf{t}} 
    D^{\alpha_i}_{t_i\wedge s_i} D^{\alpha(i+1:r)}_{\mathbf{s}}  f(\theta_u^{q,p} )|$ is bounded
by a sum of
terms of type
\begin{align*}
\left (\sum_{j=1}^r \|\partial_{sp}^j f\|_{\infty} \right )
  |  D^{\mathbf{k}_0}_{\mathbf{t}_0} Y_u^{q,p} | \,
|D^{\mathbf{k}_1}_{\mathbf{t}_1} Z_u^{q,p}| \,|
D^{\mathbf{k}_2}_{\mathbf{t}_2} U_u^{q,p}| ,
\end{align*}
where $\mathbf{k}_j \in \{0,1\}^{l_j}$ are
vectors of size  $l_j$ with $l_0+l_1+l_2 \le r$ and $\mathbf{t}_j$ are sub vectors of $\{t_1,...,t_{i-1}, t_i\wedge s_i, s_{i+1},...,s_r\}.$
Hölder's inequality and Lemma \ref{lem5b} give 
\[\e \left |\int_{(t_i \wedge s_i) \vee \mathbf{ts}_{-i}}^{t_i \vee s_i \vee \mathbf{ts}_{-i}} D^{\alpha(1:i-1)}_{\mathbf{t}} 
    D^{\alpha_i}_{t_i\wedge s_i} D^{\alpha(i+1:r)}_{\mathbf{s}}  f(\theta_u^{q,p} )du \right |^j \le C(p,j,\|\xi\|_{p,1},(\|\partial_{sp}^k f\|_{\infty})_{k\le p})  |t_i-s_i|^{\frac{j}{2}}.\]
    
For the first term on the r.h.s. of \eqref{DIqp} we notice that
\[
 \int_{t_i \vee s_i \vee \mathbf{ts}_{-i}}^T   |D^{\alpha(1:i-1)}_{\mathbf{t}} 
    \Delta^{\alpha_i}_i D^{\alpha(i+1:r)}_{\mathbf{s}}  f(\theta_u^{q,p} ) |du  
\]
is bounded by a sum of terms of type
$$\int_{\mathbf{ts}_{-i}}^T  \bigg ( \sum_{j=1}^r \|\partial_{sp}^j f\|_{\infty}  \bigg )  |  D^{\mathbf{k}_0}_{\mathbf{t}_0} \Phi_u^{q,p} | \,
|D^{\mathbf{k}_1}_{\mathbf{t}_1} \Psi_u^{q,p}| \, |
D^{\mathbf{k}_2}_{\mathbf{t}_2}  \Delta^{\alpha_i}_iD^{\mathbf{k}_3}_{\mathbf{t}_3} 
 \Gamma_u^{q,p} |du,$$ 
where   $(\Phi_u^{q,p},\Psi_u^{q,p}, \Gamma_u^{q,p} )$ stands for a permutation of  $\{Y_u^{q,p},Z_u^{q,p}, U_u^{q,p} \},$ and  $\mathbf{k}_j \in \{0,1\}^{l_j}$ are
vectors of size  $l_0,l_1, l_2+l_3+1 \le r$ while the $\mathbf{t}_j$ denote the appropriate sub vectors of $\{t_1,...,t_{i-1}, t_i, s_i, s_{i+1},...,s_r\}.$
 
By Hölder's inequality and  assumption \eqref{induction-assumption-onYZU} we conclude that 
\[
\e \left  | \int_{t_i \vee s_i \vee \mathbf{ts}_{-i}}^TD^{\alpha(1:i-1)}_{\mathbf{t}} 
    \Delta^{\alpha_i}_i D^{\alpha(i+1:r)}_{\mathbf{s}}  f(\theta_u^{q,p} )du \right |^j \le  K_{r}(j) |t_i-s_i|^{j\beta_{I_{q,p}}}.
\]

We finish the proof of  Lemma \ref{lem50}  by arguing that   assumption   \eqref{induction-assumption-onYZU} 
holding for true for a certain $q,$  implies  it   for $q+1:$  
We want to use  \eqref{eq:Y_qp} and \eqref{eq:ZU_qp} and therefore we first notice that in the same way as above for  $I_{q,p}$ one can show that  
\eqref{induction-assumption-onYZU} implies that  
$$ \int_0^t  f\left(s,Y^{q,p}_s,Z^{q,p}_s,U^{q,p}_s\right)ds
\text{ \,\,\, satisfies  \,\,\,} \mathcal{H}_p.$$ It is also clear that  satisfying  $\mathcal{H}_p$ is stable
  with respect to linear combination and taking the conditional expectation $\e_t.$  What  we still need to check is whether satisfying  $\mathcal{H}_p$  
  is also stable with respect to the truncation  $\cc_p.$ For this, let us assume that $F=\sum_{n=0}^{\infty} I_n(g_n)$   satisfies  $\mathcal{H}_p.$ 
Following the proof of Lemma \ref{lem5b}, we have
\begin{align*}
  &D^{\alpha(1:i-1)}_{\mathbf{t}} \Delta^{\alpha_i}_i D^{\alpha(i+1:r)}_{\mathbf{s}} F\\
   &=\sum_{n=r}^{\infty}n(n-1)\cdots(n-r+1)I_{n-r}(g_n(*,z_1,\cdots,z_i,z'_{i+1},\cdots,z'_r) \\
   & \quad\quad  \quad\quad \quad\quad -g_n(*,z_1,\cdots, z_{i-1},z'_i\cdots,z'_r)), 
 \end{align*}
where $z_j=(t_j, i_j)$ and  $z'_j=(s_j, i_j).$ Like in \eqref{E-of-product-of-chaos} we get 
\begin{align*}
 & \e[|\cc_{p-r} (D^{\alpha(1:i-1)}_{\mathbf{t}} \Delta^{\alpha_i}_i D^{\alpha(i+1:r)}_{\mathbf{s}} F)|^j]  \\
&  \le  C(p,j,T)  \sum_{n_1,\cdots,n_j=0}^{p-r} \frac{(n_1+r)!}{n_1!}\cdots \frac{(n_j+r)!}{n_j!}  \\  & \,\,\times  \prod_{i=1}^j
  \|g_{n_i+r}(*,z_1,\cdots,z_i,z'_{i+1},\cdots,z'_r) -g_{n_i+r}(*,z_1,\cdots, z_{i-1},z'_i\cdots,z'_r))  \|_{\infty}  \\
  & \le  C(p,j,T)  ( K_ p^F (1) |t_i-s_i|^{\beta_{F}} )^j,
 \end{align*}
where we used that 
\begin{align*}
 (n_i+r)!\|g_{n_i+r}(*,z_1,\cdots,z_i,z'_{i+1},\cdots,z'_r) -&g_{n_i+r}(*,z_1,\cdots, z_{i-1},z'_i\cdots,z'_r))  \|_{\infty} \\ &\le  K_ p^F(1) |t_i-s_i|^{\beta_{F}}.
\end{align*}

\subsection{Proof of Lemma \ref{lem7}}\label{sec:proof_lem7}
Using the definitions \eqref{eq:chaos_dec} and \eqref{chaos_dec_MC} leads to
\begin{align*}
  (\cc_p^N-\cc_p^{N,M})(F)=d_0-\hat{d_0}+\sum_{k=1}^p \sum_{|{\bf n}|=k}
  (d^{\bf n}_k-\hat{d^{\bf n}_k})\prod_{i=1}^N K_{n_i^B}(G_i)C_{n_i^P}(Q_i,\kappa h).
\end{align*}
Since $\hat{d^{\bf n}_k}$ is independent of $(G_i,Q_i)_{1\le i \le N}$
\begin{align*}
  \e(|(\cc_p^N-\cc_p^{N,M})(F)|^2)=\e(|d_0-\hat{d_0}|^2)+\sum_{k=1}^p
  \sum_{|{\bf n}|=k}\frac{({\bf n}^P)!(\kappa h)^{|{\bf n}^P|}}{({\bf n}^B)!}\e(|d^{\bf n}_k-\hat{d^{\bf n}_k}|^2).
\end{align*}
The definition of the coefficients $d_0$ and $d^{\bf n}_k$ given in
\eqref{eq:coef_chaos_dec} leads to
\begin{align*}
  \e(|(\cc_p^N-\cc_p^{N,M})(F)|^2)=\V(\hat{d_0})+\sum_{k=1}^p
  \sum_{|{\bf n}|=k}\frac{({\bf n}^P)!(\kappa h)^{|{\bf n}^P|}}{({\bf n}^B)!}\V(\hat{d^{\bf n}_k}).
\end{align*}
Using the definition of $\hat{d^{\bf n}_k}$ (see \eqref{d_hat}) leads to the first result.
To get the second result, we write
$\cc_p^{N,M}(F)=(\cc_p^{N,M}-\cc_p^{N})(F)+\cc_p^N(F)$. Since
$\e\left((\cc_p^{N,M}-\cc_p^{N})(F)\cc_p^N(F)\right)=0$, we get
\begin{align*}
  \e(|\cc_p^{N,M}(F)|^2)=\e(|(\cc_p^{N,M}-\cc_p^{N})(F)|^2)+\e(|\cc_p^N(F)|^2).
\end{align*}
Lemma \ref{lem9} ends the proof.

 \subsection{The product of  two multiple integrals}\label{LeeShihformula}

For the convenience of the reader, we cite here  \cite[Theorem 3.6]{LS_04} from Lee and Shih adapted 
to our  simple situation where the multiple  integrals  $I_k(g_k)$ are built using the process $B+\tilde N$ like in 
Subsection \ref{multiple}. For this, we first  introduce the 'contraction and identification operator' 
$ \otimes_a^b.$ For symmetric functions  $g_k \in (L^2)^{\otimes k}( \lambda \otimes(\delta_0 + \kappa \delta_1))$ and 
$g_m \in (L^2)^{\otimes m}( \lambda \otimes(\delta_0 + \kappa \delta_1))$ we define 
the function $ g_k   \otimes_a^b g_m:  ([0,T] \times \{0,1\} )^{k-a-b} \times  ([0,T] \times \{0,1\} )^{m-a-b} \times ([0,T] \times \{0,1\} )^{b} \to \rset $
by
\begin{align}\label{contraction-identification}
(g_k  \otimes_a^b g_m)({\bf x }, {\bf y }, {\bf z })  =  \int_{([0,T] \times \{0,1\} )^a} g_k ({\bf x }, {\bf z }, {\bf w }) g_m ({\bf w }, {\bf z }, {\bf y}) d[ \lambda \otimes(\delta_0 + \kappa \delta_1)]^{\otimes a}({\bf w })
\end{align}
for  $({\bf x }, {\bf y }, {\bf z })  \in  ([0,T] \times \{0,1\} )^{k-a-b} \times  ([0,T] \times \{0,1\} )^{m-a-b} \times ([0,T] \times \{0,1\} )^{b} .$
\begin{thm}  If  $g_k \in (L^2)^{\otimes k}( \lambda \otimes(\delta_0 + \kappa \delta_1))$ and 
$g_m \in (L^2)^{\otimes m}( \lambda \otimes(\delta_0 + \kappa \delta_1))$ are symmetric functions such that 
 $|g_k| \otimes_a^b |g_m|$  is in  $(L^2)^{\otimes (k+m-2a-b)}( \lambda \otimes(\delta_0 + \kappa \delta_1)) ,$ then
\[
I_k (g_k)  I_m(g_m)
= \sum_{a=0}^{k \wedge m} \sum_{b=0}^{k \wedge m-a}   a!b!  \binom{k}{a} \binom{m}{a} \binom{k-a}{b} \binom{m-a}{b}   I_{k+m-2a-b}( g_k   \otimes_a^b g_m).
\]
\end{thm}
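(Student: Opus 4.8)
The plan is to reduce the identity, by multilinearity and a density argument, to the case where $g_k$ and $g_m$ are symmetrized tensor products of the basis functions $e_i \otimes p_0$ and $e_j \otimes p_1$, and then to read it off from the classical product (linearization) formulas for Hermite and Charlier polynomials. First I would fix a partition of $[0,T]$ into intervals $\Lambda_i = ]\ov{t}_{i-1}, \ov{t}_i]$ fine enough that both kernels are step functions, and invoke \eqref{multiple=product-of-iterated} together with \eqref{iterated-Hermite} and \eqref{iterated-Charlier} to write each multiple integral as a constant times a product $\prod_i K_{n_i^B}(G_i) C_{n_i^P}(Q_i, \kappa h)$ over the cells, where $G_i, Q_i$ are the (independent) Brownian and Poisson increments on $\Lambda_i$. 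Because increments over disjoint intervals are independent, the product $I_k(g_k) I_m(g_m)$ then factorizes cell by cell, and within each cell I am left with the product of two Hermite polynomials (on the Brownian coordinate $x=0$) or two Charlier polynomials (on the Poisson coordinate $x=1$).

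The decisive structural point is that the two coordinates behave differently. On a Brownian cell the classical Hermite linearization $K_n K_m = \sum_a c_a K_{n+m-2a}$ contains a \emph{single} sum, corresponding to pairwise contractions only; this is why $b=0$ is forced on coordinates with $x=0$, matching the fact that the Lebesgue diagonal carries no mass. On a Poisson cell the linearization formula for the Charlier polynomials $C_n(\cdot, \kappa h) C_m(\cdot, \kappa h)$ carries \emph{two} indices — one for contractions and one for identifications — precisely because the jump measure has an atom at $x=1$ of mass $\kappa$; this produces the identification index $b$ together with the factors of $\kappa$. Recombining the cellwise factors, the contraction of $a$ shared variables and the identification of $b$ shared variables reassemble exactly into the operator $g_k \otimes_a^b g_m$ of \eqref{contraction-identification}, integrated against $[\lambda \otimes(\delta_0 + \kappa \delta_1)]^{\otimes a}$, so that the $\kappa^{|\sigma|}$-type weights appear automatically through the measure.

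The main obstacle is the combinatorial bookkeeping: the per-cell linearization coefficients are local, whereas the coefficient $a!\,b!\,\binom{k}{a}\binom{m}{a}\binom{k-a}{b}\binom{m-a}{b}$ in the statement is global in $k$ and $m$. Matching them requires summing over all ways of distributing the $a$ contracted and $b$ identified pairs among the cells and checking that the resulting multinomial factors collapse to these binomials; equivalently, one verifies that the number of ways to choose ordered $a$-tuples to contract and $b$-tuples to identify agrees on both sides. An alternative, more intrinsic route avoids the polynomial reduction: one first proves the single-integral multiplication rule for $I_1(f)\,I_m(g)$ by It\^o's formula — on the continuous part it yields only a contraction term, while the jump part of $B+\tilde N$ additionally produces an identification term — and then induces on $k$, where the same combinatorial identity is the crux. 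In either approach the remaining step is the density argument: step-function kernels are dense in symmetric $L^2$, the products $I_k(g_k^{(n)})I_m(g_m^{(n)})$ converge in $L^1(\Omega)$ while the finite sum on the right converges in $L^2(\Omega)$ thanks to the standing hypothesis $|g_k|\otimes_a^b|g_m| \in (L^2)^{\otimes(k+m-2a-b)}(\lambda \otimes(\delta_0 + \kappa \delta_1))$, which closes the identity for general symmetric kernels.
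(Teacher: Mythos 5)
First, note that the paper does not actually prove this theorem: it is quoted from Lee and Shih \cite{LS_04} (their Theorem 3.6) ``for the convenience of the reader'', so there is no in-paper argument to compare yours against and your proposal has to stand on its own. As a strategy it is the standard one, and it correctly locates the structural origin of the two indices: the contraction index $a$ comes from the absolutely continuous part of the bracket (hence carries the measure $\lambda\otimes(\delta_0+\kappa\delta_1)$ and the powers of $\kappa$), the identification index $b$ comes from the jump part, and the absence of atoms of $\lambda$ is exactly what forces $b=0$ on the $x=0$ coordinate. Either of your two routes (cellwise Hermite/Charlier linearization, or induction on the order via It\^o's formula) can in principle be pushed through.

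However, the two steps you flag but do not carry out are precisely where the content of the theorem lies, so as written this is a plan rather than a proof. (i) The combinatorial collapse: the per-cell linearization coefficients must recombine, after summing over all ways of distributing the $a$ contracted and $b$ identified pairs among the cells, into the single global factor $a!\,b!\binom{k}{a}\binom{m}{a}\binom{k-a}{b}\binom{m-a}{b}$; this Vandermonde-type identity is the whole point of the statement and is merely asserted. (ii) The closing density argument is not routine: the pure contraction $\otimes_a^0$ is continuous from $L^2\times L^2$ into $L^2$ by Cauchy--Schwarz, but the identification $\otimes_a^b$ with $b>0$ restricts kernels to a diagonal and is neither everywhere defined nor continuous on $L^2$. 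The standing hypothesis that $|g_k|\otimes_a^b|g_m|$ lies in $L^2$ only guarantees that the right-hand side makes sense for the limit kernels; it does not give convergence of $g_k^{(n)}\otimes_a^b g_m^{(n)}$ to $g_k\otimes_a^b g_m$ along an arbitrary sequence of step-function approximations, so the approximating sequence must be chosen (for instance by conditional averaging over the grid) so that the identified traces also converge. Until (i) and (ii) are supplied the argument is incomplete.
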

An immediate consequence is that if $g_k$ and  $g_m$   have disjoint support, then 
$I_k (g_k)  I_m(g_m) =I_{k+m}(g_k \otimes g_m).$ 

%% \linenumbers

%% main text
%\section{}
%\label{} 
%% The Appendices part is started with the command \appendix;
%% appendix sections are then done as normal sections
%% \appendix

%% \section{}
%% \label{}

%% If you have bibdatabase file and want bibtex to generate the
%% bibitems, please use
%%
\bigskip

{\large \bf References}
 \bibliographystyle{abbrv} 
\bibliography{ref}

%% else use the following coding to input the bibitems directly in the
%% TeX file.

%\begin{thebibliography}{00}

%% \bibitem[Author(year)]{label}
%% Text of bibliographic item

%\bibitem[ ()]{}

%\end{thebibliography}
\end{document}